\documentclass[11pt]{amsart}

%%%%%%%%%%%
%
% RECOVERABLE SYSTEMS AS INTERACTION MODELS 
% Full paper; 
% CURRENT VERSION; 
% amsart latex style
%
%%%%%%%%%%%

%\addtolength{\topmargin}{7mm}

\usepackage[utf8]{inputenc} 
\usepackage[T1]{fontenc}
\usepackage{url}
\usepackage{tcolorbox}
\usepackage{cite}
\usepackage{amsmath} 
                             
\interdisplaylinepenalty=5000
\allowdisplaybreaks

\usepackage{soul,times,amsfonts,amssymb,amsthm,bbm,mathrsfs,xcolor,graphicx,enumitem,booktabs,euscript,mathtools,nicefrac,cite,float,graphicx}
\usepackage[font=footnotesize]{subcaption}
\captionsetup[subfigure]{
  justification=raggedright, % or raggedleft or centerlast
  singlelinecheck=false,     % prevents automatic centering of single-line captions
  font=footnotesize,         % sets subcaption font size
  width=\linewidth           % forces caption to fill the subfigure box
}
\captionsetup[subfigure]{width=\linewidth}
% \captionsetup[figure]{
%   justification=center,
%   singlelinecheck=false,
%   font=footnotesize,width=\linewidth
% }
%\usepackage[caption=false,font=footnotesize]{subfig}
\usepackage[normalem]{ulem}
\usepackage{cuted}
\usepackage{flushend}
\usepackage[
colorlinks,bookmarks=true, citecolor=blue!90!black, urlcolor=blue!90!black,linkcolor=blue!90!black, pdfencoding=auto, psdextra]{hyperref}
\makeatletter
\pretocmd{\subsubsection}{\vspace{6pt}}{}{}
\makeatother

\newcommand\redout{\bgroup\markoverwith{\new{\rule[0.5ex]{2pt}{0.8pt}}}\ULon}

\newtheorem{theorem}{Theorem}[section]
\newtheorem{lemma}[theorem]{Lemma}

\newtheorem{proposition}[theorem]{Proposition}

\theoremstyle{definition}
\newtheorem{definition}{Definition}[section]
\theoremstyle{remark}
\newtheorem{remark}{Remark}
\newtheorem{example}{Example}
\allowdisplaybreaks

\usepackage[left=1.2in,right=1.2in,top=1.4in,bottom=1.4in]{geometry}

\newcommand\nc\newcommand
\nc\bfa{{\boldsymbol a}}\nc\bfA{{\boldsymbol A}}\nc\sA{{\EuScript A}}\nc\cA{{\mathcal A}}
\nc\bfb{{\boldsymbol b}}\nc\bfB{{\boldsymbol B}}\nc\cB{{\mathcal B}}\nc\sB{{\EuScript B}}
\nc\bfc{{\boldsymbol c}}\nc\bfC{{\boldsymbol C}}\nc\cC{{\mathscr C}}
\nc\bfd{{\boldsymbol d}}\nc\bfD{{\boldsymbol D}}\nc\cD{{\mathscr D}}
\nc\bfe{{\boldsymbol e}}\nc\bfE{{\boldsymbol E}}\nc\cE{{\mathcal E}}\nc\sE{{\mathscr E}}
\nc\bff{{\boldsymbol f}}\nc\bfF{{\boldsymbol F}}\nc\cF{{\mathscr F}}
\nc\bfg{{\boldsymbol g}}\nc\bfG{{\boldsymbol G}}\nc\cG{{\EuScript G}}
\nc\bfh{{\boldsymbol h}}\nc\bfH{{\boldsymbol H}}\nc\cH{{\mathcal H}}\nc\sH{{\mathscr H}}
\nc\bfi{{\boldsymbol i}}\nc\bfI{{\boldsymbol I}}\nc\cI{{\EuScript I}}\nc\sI{{\mathscr I}}
\nc\bfj{{\boldsymbol j}}\nc\bfJ{{\boldsymbol J}}\nc\cJ{{\EuScript J}}
\nc\bfk{{\boldsymbol k}}\nc\bfK{{\boldsymbol K}}\nc\cK{{\EuScript K}}
\nc\bfl{{\boldsymbol l}}\nc\bfL{{\boldsymbol L}}\nc\cL{{\EuScript L}}
\nc\bfm{{\boldsymbol m}}\nc\bfM{{\boldsymbol M}}\nc\cM{{\EuScript M}}\nc\sM{{\mathscr M}}
\nc\bfn{{\boldsymbol n}}\nc\bfN{{\boldsymbol N}}\nc\cN{{\EuScript N}}
\nc\bfo{{\boldsymbol o}}\nc\bfO{{\boldsymbol O}}\nc\cO{{\EuScript O}}\nc\sO{{\mathscr O}}
\nc\bfp{{\boldsymbol p}}\nc\bfP{{\boldsymbol P}}\nc\cP{{\EuScript P}}\nc\sP{{\mathscr P}}
\nc\bfq{{\boldsymbol q}}\nc\bfQ{{\boldsymbol Q}}\nc\cQ{{\mathcal Q}}
\nc\bfr{{\boldsymbol r}}\nc\bfR{{\boldsymbol R}}\nc\cR{{\EuScript R}}
\nc\bfs{{\boldsymbol s}}\nc\bfS{{\boldsymbol S}}\nc\cS{{\EuScript S}}
\nc\bft{{\boldsymbol t}}\nc\bfT{{\boldsymbol T}}\nc\cT{{\EuScript T}}\nc\sT{{\mathscr T}}
\nc\bfu{{\boldsymbol u}}\nc\bfU{{\boldsymbol U}}\nc\cU{{\EuScript U}}
\nc\bfv{{\boldsymbol v}}\nc\bfV{{\boldsymbol V}}\nc\cV{{\mathscr V}}
\nc\bfw{{\boldsymbol w}}\nc\bfW{{\boldsymbol W}}\nc\cW{{\mathscr W}}
\nc\bfx{{\boldsymbol x}}\nc\bfX{{\boldsymbol X}}\nc\cX{{\EuScript X}}
\nc\bfy{{\boldsymbol y}}\nc\bfY{{\boldsymbol Y}}\nc\cY{{\mathscr Y}}
\nc\bfz{{\boldsymbol z}}\nc\bfZ{{\boldsymbol Z}}\nc\cZ{{\EuScript Z}}
\nc{\1}{{\mathbbm{1}}}
\nc\cross{{\mathsf{C}}}

\nc{\remove}[1]{}

 %---> Font definitions ----------

\newcommand{\bfit}{\bfseries\itshape}

\DeclareSymbolFont{bbold}{U}{bbold}{m}{n}
\DeclareSymbolFontAlphabet{\mathbbold}{bbold}

\DeclareMathOperator{\supp}{supp}

\newcommand{\R}{{\mathbb R}}

\newcommand{\Z}{{\mathbb Z}}
\newcommand{\E}{{\mathbb E}}
\nc\torus{{\mathbb{T}}}
\nc\reals{{\mathbb R}}
\nc{\ff}{{\mathbb F}}
\nc{\PP}{{\mathbb P}}
\nc{\complex}{{\mathbb C}}

\nc\inftyeq{\overset{\infty}{=}}

\usepackage{tikz}
\usetikzlibrary{positioning}
\usetikzlibrary{calc}
\usetikzlibrary{decorations.pathreplacing}

\definecolor{newred}{HTML}{ff382e}
\definecolor{newgreen}{HTML}{549641}
\definecolor{newblue}{HTML}{4c4cfc}
\definecolor{neworange}{HTML}{c98702}

\newcommand{\tvd}[1]{\Vert {#1}\Vert_{\mathnormal{TV}}}

\nc{\new}[1]{\textcolor{blue}{#1}}

 % Define a command for the triangle symbol

\setcounter{tocdepth}{2}

\title[]{The maximal hard-core model as a recoverable system: Gibbs measures and phase coexistence\thanks{A part of the results of this paper were presented at the 2025 IEEE International Symposium on Information Theory, June 2025, Ann Arbor, MI.}}

\author[]{Geyang Wang$^1$}

\author[]{Alexander Barg$^1$}

\author[]{Navin Kashyap$^2$}

\begin{document}

\footnotetext[1]{ISR and Department of ECE, University of Maryland, College Park, MD 20742, USA Emails:\{wanggy,abarg\}@umd.edu.  Research was supported by the US NSF grants CCF-2330909 and CCF-2526035.}

\footnotetext[2]{Department of Electrical Communication Engineering,
Indian Institute of Science, Bangalore 560012, India. Email {nkashyap@iisc.ac.in}}

\begin{abstract}
Recoverable systems provide coarse models of data storage on the two-dimensional square lattice, where each site reconstructs its value from neighboring sites according to a specified local rule. To study the typical behavior of recoverable patterns, this work introduces an interaction potential on the local recovery regions of the lattice, which defines a corresponding interaction model. We establish uniqueness of the Gibbs measure at high temperature and derive bounds on the entropy in the zero- and low-temperature regimes.

For the recovery rule under consideration, exactly recoverable configurations coincide with maximal independent sets of the grid. Relying on methods developed for the standard hard-core model, we show phase coexistence at high activity in the maximal case. Unlike the standard hard-core model, however, the maximal version admits nontrivial ground states even at low activity, and we manage to classify them explicitly. We further verify the Peierls condition for the associated contour model. Combined with the Pirogov–Sinai theory, this shows that each ground state gives rise to an extremal Gibbs measure, proving phase
coexistence at low activity.

\end{abstract}

\maketitle
\vspace*{-.1in}
{\footnotesize{\tableofcontents}}

\section{Introduction}

Recoverable systems extend the concept of locally recoverable codes, which were introduced in coding theory over a decade ago \cite{gopalan2011locality} and by now form an established area of research \cite{Ramkumar2022}. The concept of local reconstruction is motivated by the need to reduce communication required for the recovery of a dysfunctional storage element by addressing a subset of the nodes
used to store the encoded block of data. If every coordinate of the codeword is a function of a small number of other coordinates, this supports the local recovery, contributing to the efficient functioning of the storage system. A version of this problem with graphical constraints, introduced in \cite{Mazumdar2015,Shanmugam2014}, assumes that the coordinates of the code are in a one-to-one correspondence with the 
vertices of a finite graph, and that the local recovery of a vertex relies on the values assigned to its neighbors in the graph. 
Apart from being of independent interest, this problem turned out to be linked to several other combinatorial problems, notably, the {min-rank} problem for graphs (see \cite{barg2022high} for an overview of the literature). Some recent results on codes with locality on triangle-free graphs appear in \cite{barg2024storage,huang2023construction}. In a further development, the problem of local recovery
was extended to infinite graphs such as graphs on $\Z^d, d\ge 1$  \cite{elishcoRecoverableSystems2022},\cite{barg2024LinesGrids}, where it was studied relying on methods from symbolic dynamics and constrained systems. In this work, we study binary codes with local reconstruction on infinite graphs, called {recoverable systems} \cite{elishcoRecoverableSystems2022}, phrasing them as interaction models.

To define a recoverable system, consider the standard square grid $\Z^2$ in which two vertices are connected by an edge if one of their two coordinates coincides and the other differs by one. Two vertices $i,j$ are called adjacent, denoted $i\sim j$, if they are connected by an edge, and two edges are called adjacent if they share a common vertex.  Any assignment of bits to the points (sites) of the grid is called a {\em configuration}, denoted $\omega$. Let $\Omega=\{0,1\}^{\Z^2}$ be the set of all configurations. 
A subset $\cX\subset\Omega$ is said to form a {\em recoverable system} if for every $\omega\in \cX$ and every site $i\in\Z^2$, the value of its bit $\omega_i$ is uniquely determined by the values of its $4$ neighbors in the lattice, in other words, there is a function $f:\{0,1\}^{4}\to\{0,1\}$ such that
$f(\{\omega_j,i\sim j\})=\omega_i$ for all $i\in \Z^2$. If, as this notation suggests, $f$ does not depend on $i$, the system $\cX$ is called shift-invariant. 

While this setting is rather general, in this paper we study a recoverable system $\cX_0$ on $\Z^2$ formed of all configurations $\omega$ that obey the following recovery rule: a site is 1 if and only if all of its 4 neighbors are 0. If at least one of the 4 neighbors of $i$ is 1, then $\omega_i=0$. In other words, 
  \begin{equation}  \label{eq:recovery}
  \omega_{(0,0)}=1 \;\Leftrightarrow\;
\omega_{(0,1)}=\omega_{(1,0)}=\omega_{(0,-1)}=\omega_{(-1,0)}=0,
  \end{equation}
   extended to the other sites by translation. 
   
There are two reasons to study this system. First, having information storage in mind, we are interested in ``large-size'' systems (systems with positive topological entropy). To argue that the system $\cX_0$ has positive entropy,
we recall the connection between recoverable systems and {constrained systems}, discussed in detail in \cite{elishcoRecoverableSystems2022} (in the case of one dimension). Specifically, a bi-infinite binary sequence in one dimension is said to satisfy a $(d,k)$-{\em constraint} if every run of consecutive zeros has length 
at least $d$ and at most $k$. A 2D constrained system $S_{(d_1,k_1),(d_2,k_2)}$ is a collection of assignments of bits to the sites of $\Z^2$ such that for every configuration, every row satisfies the $(d_1,k_1)$-constraint and every
column satisfies the $(d_2,k_2)$-constraint. Now consider the system $S_{(1,2),(1,\infty)}$: for any $\omega$ in this system, no two ones are adjacent either horizontally or vertically, every zero is adjacent to a 1 in its row, so every such configuration satisfies \eqref{eq:recovery}. We conclude that $S_{(1,2),(1,\infty)}\subset \cX_0$. By a result of \cite{katozegger2000ISIT}, this system has positive entropy, and thus, $H(\cX_0)>0$. 
Viewing the system $\cX_0$ as a {\em subshift of finite type}, one can also prove that it has positive entropy. This link is discussed in more detail in Appendix~\ref{app: SFT}.

Another motivation comes from the following connection to {independent sets} in the $\Z^2$ grid graph. 
Given a configuration $\omega$, we say that a site $i$ is occupied or not depending on whether  $\omega_i=1$ or $0$.
A configuration forms an {independent set} in $\Z^2$ if no two occupied sites are adjacent, and it forms a {\em maximal independent set}, or MIS, if no unoccupied vertices can be made occupied without breaking the independence condition.
Let $\sI(\Z^2)$ ($\sM(\Z^2)$) denote the set of configurations whose supports form independent sets (resp., maximal independent sets) in $\Z^2$.

\begin{lemma}\label{lemma:MIS} 
A configuration $\omega\in \Omega$ forms an MIS if and only if $\omega\in\cX_0$ (see Fig.~\ref{fig:MIS} for an illustration of this equivalence).
\end{lemma}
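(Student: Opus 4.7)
The proof is essentially a direct unpacking of the two conditions, so the plan is to verify each direction of the biconditional by translating the recovery rule \eqref{eq:recovery} into the two defining properties of a maximal independent set. The recovery rule is itself an \emph{if and only if}, so I will split it into its two implications and match each with one of the MIS requirements.

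For the forward direction ($\Rightarrow$), assume $\omega\in\sM(\Z^2)$. Fix a site $i$. If $\omega_i=1$, then independence of the support forbids $\omega_j=1$ for any $j\sim i$, so all four neighbors of $i$ carry the value $0$; this is the $(\Rightarrow)$ half of \eqref{eq:recovery}. If instead $\omega_i=0$, then maximality forces at least one neighbor $j\sim i$ to satisfy $\omega_j=1$, for otherwise the configuration obtained by flipping $\omega_i$ to $1$ would still be independent, contradicting maximality; this is the contrapositive of the $(\Leftarrow)$ half of \eqref{eq:recovery}. Since the two cases together cover every site, $\omega\in\cX_0$.

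For the reverse direction ($\Leftarrow$), assume $\omega\in\cX_0$. To show independence, suppose for contradiction that $\omega_i=\omega_j=1$ for some adjacent sites $i\sim j$. Applying the $(\Rightarrow)$ half of \eqref{eq:recovery} at the site $i$ gives $\omega_k=0$ for every $k\sim i$, in particular $\omega_j=0$, a contradiction. To show maximality, consider any site $i$ with $\omega_i=0$. The $(\Leftarrow)$ half of \eqref{eq:recovery} asserts that having all four neighbors equal to $0$ would force $\omega_i=1$; hence at least one neighbor of $i$ must carry the value $1$, which is exactly the condition preventing $i$ from being added to the independent set. Thus $\omega\in\sM(\Z^2)$.

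I do not expect any genuine obstacle here: the content of the lemma is that the recovery rule $\omega_i=1 \Leftrightarrow$ (all neighbors of $i$ are $0$) is the pointwise, shift-invariant reformulation of ``occupied sites form an independent set, and every unoccupied site sees an occupied neighbor,'' and the only care needed is to match the two halves of the biconditional with the independence and maximality conditions, respectively. The figure referenced in the statement can serve as an illustration but plays no role in the argument.
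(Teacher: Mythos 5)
Your proof is correct and follows essentially the same route as the paper's: translate the two halves of the biconditional in \eqref{eq:recovery} into the independence and maximality conditions, checking both implications. You spell out the case analysis in slightly more detail than the paper does, but the content is identical.
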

\begin{proof}
Let $\omega\in\cX_0$ and let $\supp (\omega) := \{i \in \Z^2, \omega_i = 1\}$. From \eqref{eq:recovery}, $i$ is occupied 
only if its 4 neighbors are not, and so $I\in \sI(\Z^2)$. Moreover, if $\supp (\omega)$ is not maximal, then
there is an unoccupied site $i$ all of whose neighbors also are unoccupied, but this contradicts \eqref{eq:recovery}.
Thus, $\supp(\omega)\in \sM(\Z^2)$ for all $\omega\in\cX_0$. Conversely, if $I\in \sM(\Z^2)$, then the configuration $\omega$ with $\supp(\omega)=I$ satisfies 
the recovery rule \eqref{eq:recovery}.
\end{proof}

Thus, the entropy of $\cX_0$ can be bounded above by the entropy of the maximal hard-square constraint of the grid \cite{Oh2017maxindset}. 
We note that, unlike the count of independent sets, which has been extensively studied
in probabilistic combinatorics and statistical physics (the hard-core model, see, e.g., \cite{van1994percolation}, \cite{scott2005repulsive}, \cite{blanca2019phase}, \cite{Jenssen_2024}, \cite{cannon2024pirogov}, \cite{mazel2019high,mazel2025high}), the study of maximal independent sets seems to have received less attention.

So far, we have discussed configurations that exactly follow the recovery condition. Allowing some errors arguably further increases the entropy, and this is adequately described by considering systems with ``positive temperature'' (rather than zero). This point of view connects the recovery condition with interaction models, and this connection forms the main topic of this paper. 

\subsection{Overview of the paper and summary of results}

The paper is formed of two parts, of which the first (Sections \ref{sec:high-temp}, \ref{sec:entropy}) is largely devoted to the study of the system
$\cX_0$ at positive temperature, and the second part, formed of Sections~\ref{sec:external_field} and \ref{sec:low avtivity}, addresses phase coexistence
for the hard-core model of maximal independent sets. 
In the first part, we show the absence of phase transitions at high temperature, relying on Dobrushin's uniqueness criterion \cite[Thm.6.31]{friedliLattice2018}. In this part, we also estimate the entropy of the system at both positive and zero temperature values. Calculations for the zero-temperature case are based on the transfer matrix method, often employed in the study of constrained systems \cite{forchhammer2000}. At positive 
temperature, not all the sites in a typical configuration will follow the recovery rule, and we estimate the error probability of a site as a function
of the temperature.

Our main results are related to the maximal hard-core model, which addresses the study of phase
transitions controlled by the {\em activity parameter} $\lambda$ of the model. This version of the problem can also be understood as a study of systems that follow the recovery rule \eqref{eq:recovery} exactly, but are typically formed of sparser or denser MIS configurations according as $\lambda$ is
low or high. Here, we prove the presence of phase transitions at high activity. We build on a recent result for the standard hard-core model
\cite{blanca2019phase}, which established phase transitions for high $\lambda$. The main idea in this part is a contour elimination procedure, which for the hard-core case originates with Dobrushin's paper \cite{dobrushin1968problem}. 

A new feature of the maximal hard-core model arises from observing that, unlike the hard-core case, maximal independent sets exhibit nontrivial
behavior even at low activity. While for the former, the only configuration that forms a ground state for low $\lambda$ is the all-zero one, maximal independent sets ``cannot afford'' to be too sparse if they wish to remain maximal. Therefore, for the MIS case, there are nontrivial ground states even at low $\lambda$. Such states are relatively easy to enumerate based on elementary geometric considerations, which we do in Theorem~\ref{prop: density}. As it turns out, there are 10 sparsest periodic MIS configurations, which are all related by $\Z^2$ isometries, and each of them
has density $1/5$ as a subset of $\Z^2$ (in the case of dense sets, maximality does not play a role, so there are two configurations of density 1/2, namely, the odd and even patterns). For these states, we further verify the Peierls condition \cite[Sec.7.2]{friedliLattice2018}, \cite{zahradnik1984alternate,zahradnik1998short} for the occupancy vs. the contour length, see Thm.~\ref{thm: Peierls}. The tool used in the proof is based on Delaunay triangulations of the ground states as subsets of $\R^2$. Enumeration of periodic ground states and the contour length condition enable the application of the classic {\em Pirogov-Sinai argument}, which implies that these states 
give rise to distinct periodic extremal Gibbs measures, adding to our understanding of the system behavior at large.

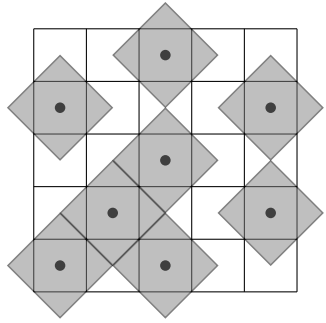
\begin{figure}[t]
    \centering
   {
        \begin{tikzpicture}[scale=0.70]
        \foreach \x in {0,1,2,3,4,5} {
            \draw (\x, 0) -- (\x, 5); 
            \draw (0, \x) -- (5, \x); 
        }
        \foreach \pos in {(0.5,0.5), (0.5,3.5), (1.5,1.5), (2.5,0.5), (2.5,4.5), (2.5,2.5), (4.5,1.5), (4.5,3.5)} {
            \fill \pos circle(0.1);
        }
             \filldraw[fill=gray, opacity=0.5, draw=black, line width=0.2mm] [rotate around={45:(0.5,0.5)}] (-0.2,-0.2) rectangle (1.2,1.2);
        \filldraw[fill=gray, opacity=0.5, draw=black, line width=0.2mm] [rotate around={45:(1.5,1.5)}] (.8,.8) rectangle (2.2,2.2);
        \filldraw[fill=gray, opacity=0.5, draw=black, line width=0.2mm] [rotate around={45:(0.5,3.5)}] (-.2,2.8) rectangle (1.2,4.2);
        \filldraw[fill=gray, opacity=0.5, draw=black, line width=0.2mm] [rotate around={45:(2.5,0.5)}] (1.8,-.2) rectangle (3.2,1.2);
   \filldraw[fill=gray, opacity=0.5, draw=black, line width=0.2mm] [rotate around={45:(2.5,2.5)}] (1.8,1.8) rectangle (3.2,3.2);
   \filldraw[fill=gray, opacity=0.5, draw=black, line width=0.2mm] [rotate around={45:(4.5,3.5)}] (3.8,2.8) rectangle (5.2,4.2);
   \filldraw[fill=gray, opacity=0.5, draw=black, line width=0.2mm] [rotate around={45:(2.5,4.5)}] (1.8,3.8) rectangle (3.2,5.2);
    \filldraw[fill=gray, opacity=0.5, draw=black, line width=0.2mm] [rotate around={45:(4.5,1.5)}] (3.8,0.8) rectangle (5.2,2.2);  
    \end{tikzpicture}
    }
    \caption{The blank/dotted squares in the grid correspond to 0's and 1's, respectively. The 1's not on the boundary are surrounded by 4 zeros, and every 0 is adjacent to at least one 1; cf.\eqref{eq:recovery}. The set of 1's forms a maximal independent set in the grid graph, or a {\em maximal hardcore configuration} in the lattice, shown by the nonoverlapping gray squares.}\label{fig:MIS}

\end{figure}
	  
\section{Preliminaries} \label{sec:prelims}
In this section, we introduce elements of notation and terminology used in the paper. We make no attempt at completeness; see, for instance, \cite[Ch.~6]{friedliLattice2018}, \cite[Ch.~2,4]{Georgii2011} for details. 

For a region $\Lambda \Subset \Z^2$ (here $\Subset$ means a {\em finite subset}) and a configuration $\omega = {(\omega_i)}_{i \in \Z^2}$, let $\omega_\Lambda = {(\omega_i)}_{i \in \Lambda}$ be its restriction to $\Lambda$,  and let $\Pi_\Lambda: \Omega \to \Omega_\Lambda$ defined by $\Pi_\Lambda(\omega) = \omega_\Lambda $ be the corresponding projection map. Let $\Omega_\Lambda:=\{\omega_\Lambda|\omega\in \Omega\}$. 

Gibbs measures can be defined on the space $(\Omega,\cF)$, where $\cF$ is a cylinder $\sigma$-algebra on $\Omega$. 
To define it, we start with a finite region $\Lambda \Subset \Z^2$ and an event $A \in \sP(\Omega_\Lambda)$, where $\sP(\Omega_\Lambda)$ is the set of all possible subsets of $\Omega_\Lambda$. A {\em cylinder} with base $\Lambda$
is defined as  $\Pi^{-1}_{\Lambda}(A) := \{\omega \in \Omega \mid \omega_\Lambda \in A\}$. 
Let
\[
\cC(\Lambda) := \{\Pi^{-1}_\Lambda (A) : A \in \sP(\Omega_\Lambda)\}
\]
be the collection of all cylinders with base $\Lambda$. We then define, for any $S \subset \Z^2$, the $\sigma$-algebra 
\[
\cF_S := \sigma \Big (\bigcup_{\Lambda \Subset S} \cC (\Lambda) \Big ),
\]
Finally, $\cF:=\cF_{\Z^2}$.

Next, let us define a {\em potential}, i.e., a collection of functions $\Phi = \{\Phi_B\}_{B \Subset \Z^2}$ such that for each finite $B \Subset \Z^2$, $\Phi_B : \Omega \mapsto \R$ is $\cF_B$-measurable.
Below, we will only consider $B$ to be a cross-shape formed of the 5 sites involved in the definition of the recovery rule \eqref{eq:recovery}, so we do not give the most general definitions. 
The {\em Hamiltonian} (energy) in $\Lambda \Subset \Z^2$ associated with the potential $\Phi$ is defined by 
\begin{equation*}\label{eq:Hamiltonian}
    \sH_{\Lambda;\Phi}(\omega) := \sum_{\substack{B \Subset \Z^2 \\ B \cap \Lambda \ne \emptyset }} \Phi_B(\omega), \quad \forall \omega \in \Omega.
\end{equation*}

Further, let us fix a configuration $\omega \in \Omega$, which we will refer to as a \emph{boundary condition}. In particular, if $\omega=0^{\Z^2}$, we will call it a \emph{zero boundary condition}. 
For $\Lambda\Subset \Z^2$, let $\partial\Lambda:=\{i \in \Lambda^c \mid i \sim j, j \in \Lambda\}$ be its
(immediate outer) boundary. Denote the complement of $\Lambda$ in $\Z^2$ by $\Lambda^c$, and let $\omega_{\Lambda^c}:={(\omega_i)}_{i\in\Lambda^c}$. 
For $\Lambda\Subset \Z^2$, let $\Omega_\Lambda^\omega:=\{\tau\in \Omega\mid \tau_{\Lambda^c}=\omega_{\Lambda^c}\}$ be the set of configurations that
coincide with $\omega$ outside $\Lambda$.
Define a probability measure on $(\Omega,\cF)$ having $\Omega_\Lambda^\omega$ as its support. This probability measure, denoted by $\pi_\Lambda(\cdot \mid \omega)$, assigns to each configuration $\tau_{\Lambda}\omega_{\Lambda^c} \in \Omega_{\Lambda}^{\omega}$ a probability 
   \begin{equation}\label{eq:specification}
    \pi_{\Lambda}(\tau_\Lambda \mid \omega) := \frac{1}{\bfZ^\omega_{\Lambda}}  e^{-\sH_{\Lambda; \Phi}(\tau_\Lambda \omega_{\Lambda^c})},
   \end{equation}
where $\bfZ^\omega_{\Lambda} := \sum_{\tau_\Lambda \in \Omega_\Lambda} e^{-\sH_{\Lambda; \Phi}(\tau_\Lambda \omega_{\Lambda^c})}$ is the partition function. Note that $\pi_\Lambda(\Omega_\Lambda^\omega \mid \omega) = 1$. 

For a given $\Lambda \Subset \Z^2$, as we let $\omega$ vary over the configurations in $\Omega$, we get a collection of probability measures $\{\pi_\Lambda(\cdot \mid \omega)\}_{\omega \in \Omega}$, which we collectively refer to as the \emph{probability kernel} $\pi_{\Lambda}$. The collection $\pi = \{\pi_\Lambda\}_{\Lambda \Subset \Z^2}$ of probability kernels is called a \emph{Gibbsian specification}. A measure $\mu$ on $(\Omega,\cF)$ is said to be \emph{compatible} with $\pi$ if 
$\mu=\mu\pi_\Lambda$ for all $\Lambda\Subset\Z^2$, where $\mu \pi_\Lambda(A) := \int \pi_\Lambda(A \mid \omega) \mu(d\omega), \ A \in \cF.$ Such a $\mu$ is called a {\em Gibbs measure}. From the definition of the conditional probability $\mu(A \mid \cF_{\Lambda^c})(\omega)$, and the almost-sure uniqueness of the conditional expectation, it follows that $\mu$ is compatible with $\pi$ if and only if 
$$
\mu(A \mid \cF_{\Lambda^c})(\omega) = \pi_\Lambda(A \mid \omega)
$$
holds for all $A \in \cF$ and $\Lambda \Subset \Z^2$, and for $\mu$-almost all $\omega$. Informally, this means that
  $$
  \mu(\tau \mid \tau_{\Lambda^c} = \omega_{\Lambda^c}) = \pi_\Lambda (\tau_\Lambda \mid \omega) \text{ for all }\Lambda \Subset \Z^2 \text{ and $\mu$-almost all } \omega.
  $$
Therefore, we will use $\mu_\Lambda(\cdot \mid \omega)$ to denote this conditional probability throughout this paper. 

By classic results \cite[Sec.~6.4, 6.7]{friedliLattice2018}, Gibbs measures exist under rather general assumptions which our setting certainly satisfies. Denote by $\mathscr{G}(\pi)$ the class of Gibbs measures compatible with the Gibbsian specification $\pi$. A condition involved in the definition of entropy is {\em translation invariance} of the
measure. Let $\theta_j:\Z^2\to\Z^2$ be a shift by $j\in \Z^2$, given by $\theta_j(i)=i+j$ for all $i$. A measure $\mu$ is called translation invariant if $\mu(A) = \mu(\theta_j^{-1} A)$ for all $j \in \Z^2$ and $A \in \cF$. Not all Gibbs measures that
arise from a translation invariant potential retain this property, but some do; in particular, at least one of the measures
in $\mathscr{G}(\pi)$ is translation invariant. Translation-invariant measures are convenient because they support the use
of ergodic properties in defining the entropy rate.

The final concept that we need to mention is the {ground state} of the potential. For a pair $\omega,\tilde\omega \in \Omega$, let us define the \emph{relative Hamiltonian} by 
$$
    \mathscr{H}_\Phi(\tilde{\omega} \mid \omega) := \sum_{B \Subset \Z^2} \{\Phi_B(\tilde{\omega}) - \Phi_B(\omega)\}.
$$
A configuration $\eta \in \Omega$ is called a {\em ground state} for the potential $\Phi$ 
if $\mathscr{H}_\Phi(\omega | \eta) \ge 0$ for all $\omega \inftyeq \eta$ ($\omega$'s different from $\eta$ in at most finitely many sites).
In other words, ground states are configurations such that modifying a finite number of sites cannot decrease the Hamiltonian. 
By~\eqref{eq:specification}, the system favors configurations with low energy.
In particular, at zero temperature ($\beta = \infty$), the collection of ground states (configurations of the recoverable system) has probability 1. 

    To define Gibbs measures that are related to the recoverable system $\cX_0$, we translate the recovery rule~\eqref{eq:recovery} to potentials as follows.
    We say that $B \Subset \Z^2$ is a cross shape if $B$ is formed by five adjacent sites, $(i_1,i_2), (i_1\pm 1, i_2\pm 1)$. For a cross in the grid with center at $i$, we use the notation $\cross_i$ and write $\cross$ if the center does not matter.
        Define
    \begin{equation}\label{eq:potential}
        \Phi_B(\omega) = \begin{cases}
        - \beta & \text{if $B=\cross$ and $\omega_B$ matches the rule \eqref{eq:recovery},}\\
        \beta & \text{if $B=\cross$ and $\omega_B$ does not match the rule \eqref{eq:recovery},}\\
        0 & \text{otherwise}
    \end{cases}
    \end{equation}
where $\beta>0$ corresponds to the inverse temperature. We will also separately consider the case of systems whose 
energy is controlled by an activity parameter, $\lambda$, i.e., the {\em hard-core model} for maximal independent sets. 

\section{High-temperature disordered regime}\label{sec:high-temp}

\subsection{Uniqueness of the Gibbs measure}

Generally, the Gibbs measure is not unique: for instance, a phase transition occurs in the 2D Ising model with no external magnetic field. At the same time, it is often unique for high temperature (small $\beta$). In this section, we estimate $\beta$ for which there is a unique Gibbs measure for the Hamiltonian $\mathscr{H}_{\Lambda, \Phi}$ defined above.

Define the \emph{influence} $I_{j \to i}$ by 
\begin{equation*}
    I_{j \to i} := \sup_{\substack{\omega, \omega' \in \Omega \\ \omega_k = \omega'_k \forall k \ne j}} \Vert \pi_i(\cdot \mid \omega) -\pi_i(\cdot \mid \omega') \Vert_{TV},
\end{equation*}
where 
$$\Vert \pi_i(\cdot \mid \omega) -\pi_i(\cdot \mid \omega') \Vert_{TV} = \frac{1}{2} \left (|\pi_i(0 \mid \omega) -\pi_i(0 \mid \omega')| + |\pi_i(1 \mid \omega) -\pi_i(1 \mid \omega')| \right )$$
is the total variation distance, and $\pi_i=\pi_{\{i\}}$ is a one-vertex distribution, \eqref{eq:specification}.
{\em Dobrushin's uniqueness condition} (see, e.g.,~\cite[Thm 6.31]{friedliLattice2018}) states that $|\mathscr{G}(\pi)| = 1$ once
\begin{equation}\label{eq:uniqueness}
    \alpha := \sup_{i \in \Z^2} \sum_{j \in \Z^2} I_{j \to i} < 1.
\end{equation}
\begin{proposition}
    For the potential $\Phi = \{\Phi_B\}_{B \Subset \Z^2}$ defined by~\eqref{eq:potential},  inequality~\eqref{eq:uniqueness} holds for all $\beta < \beta_0$, where $0.049 < \beta_0 < 0.05$. 
    In particular, $\alpha \approx 0.68$ given $\beta = 0.049$.
\end{proposition}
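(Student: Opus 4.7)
The plan is to use the locality of the potential $\Phi$ to reduce the Dobrushin sum to a finite computation, exploit the $D_4$-symmetry around the reference site, and then solve an elementary transcendental equation numerically. First I would identify the support of the single-site specification $\pi_i(\cdot\mid\omega)$: since $\Phi$ is supported on crosses, only the five crosses $\cross_k$ with $i\in\cross_k$ contribute to the local Hamiltonian, namely the crosses centered at $i$ and at its four nearest neighbors. Their union is the $13$-site diamond
\[
D_i=\{i\}\cup\{i\pm e_1,i\pm e_2\}\cup\{i\pm e_1\pm e_2\}\cup\{i\pm 2e_1,i\pm 2e_2\},
\]
so $I_{j\to i}=0$ unless $j\in D_i\setminus\{i\}$. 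The remaining $12$ sites split into three $D_4$-orbits: the $4$ nearest neighbors, the $4$ diagonals, and the $4$ axis-distance-$2$ sites. By translation invariance of $\Phi$ it suffices to fix $i=0$, and by symmetry it suffices to compute $I_{j\to 0}$ for one representative in each orbit.

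Next I would derive a closed form for the specification. Writing $N_+(\omega,a)\in\{0,\dots,5\}$ for the number of crosses containing $i$ whose configuration matches the recovery rule~\eqref{eq:recovery} when $\omega_i$ is set to $a$, formula~\eqref{eq:potential} gives $\sH_i(a,\omega)=5\beta-2\beta N_+(\omega,a)$, hence
\[
\pi_i(1\mid\omega)=\sigma\bigl(2\beta\,\Delta(\omega)\bigr),\qquad \Delta(\omega):=N_+(\omega,1)-N_+(\omega,0)\in\{-5,\dots,5\},
\]
where $\sigma(x)=1/(1+e^{-x})$ is the logistic function. When $\omega_j$ is flipped to produce $\omega'$, only the crosses containing both $i$ and $j$ are affected, and each such cross can alter $N_+(\cdot,0)$ and $N_+(\cdot,1)$ by at most one unit each, so $|\Delta(\omega)-\Delta(\omega')|\leq 2m_j$, where $m_j$ is the number of such crosses. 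A short inspection of the cross geometry gives $m_j=2$ for nearest and for diagonal neighbors, and $m_j=1$ for axis-distance-$2$ neighbors.

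For each orbit representative $j$, I would then maximize
\[
I_{j\to i}=\sup_\omega\bigl|\sigma(2\beta\Delta(\omega))-\sigma(2\beta\Delta(\omega'))\bigr|
\]
over the finite family of configurations on $D_i\setminus\{i,j\}$. Since $\sigma$ is monotone and its derivative is maximized at $0$, the supremum is attained at pairs $(\Delta(\omega),\Delta(\omega'))$ that both maximize $|\Delta(\omega)-\Delta(\omega')|$ and keep $\Delta(\omega)+\Delta(\omega')$ close to zero. Each such extremal pair is realized by prescribing the other entries of the two crosses containing $\{i,j\}$ so that flipping $\omega_j$ toggles as many match/mismatch statuses as possible, while using the three remaining crosses through $i$ to tune the common offset. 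This produces three closed-form expressions $I_1(\beta),I_2(\beta),I_3(\beta)$ in $e^{-\beta}$; summing gives $\alpha(\beta)=4(I_1(\beta)+I_2(\beta)+I_3(\beta))$, and one locates the root of $\alpha(\beta)=1$ in $(0.049,0.05)$ numerically, verifying $\alpha(0.049)\approx 0.68$ along the way.

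The main obstacle is the enumeration in the third step: while the case analysis is finite, one must carefully identify the worst configuration of the $11$ remaining sites of $D_i$ for each of the three orbit representatives. This enumeration is substantially reduced by the observation that only the entries of $\omega$ on the at most two crosses through $\{i,j\}$ matter for the flip (the other three crosses through $i$ contribute identically to $\Delta(\omega)$ and $\Delta(\omega')$), together with the $D_4$-symmetry of the recovery rule around $i$. Once the worst configurations are isolated, the rest of the argument is elementary calculus on the logistic function.
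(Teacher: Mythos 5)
Your proposal is correct and follows essentially the same route as the paper: both reduce the Dobrushin sum to the $12$ sites $j$ with $0<\|i-j\|_1\le 2$ (since only the five crosses through $i$ enter the single-site specification) and then evaluate the finitely many influences numerically to locate the threshold. Your additional structure --- the logistic closed form $\pi_i(1\mid\omega)=\sigma(2\beta\Delta(\omega))$, the $D_4$-orbit reduction to three representatives, and the per-cross bound on $|\Delta(\omega)-\Delta(\omega')|$ --- is a transparent way of organizing the exhaustive check that the paper simply delegates to a computer, and it reproduces the stated value $\alpha\approx 0.68$ at $\beta=0.049$.
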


\begin{proof}
The one-vertex probability $\pi_i$ is computed as follows 
\begin{equation*}
    \begin{split}
    \pi_i (\eta_i \mid \omega)  :\!&= \frac{\exp(-\sH_{i;\Phi}(\eta_i \omega_{i^c}))}{\exp(-\sH_{i;\Phi}(0 \omega_{i^c}) ) + \exp(-\sH_{i;\Phi}(1\omega_{i^c} ))}      \\[.1in]
    & = \frac{\exp(-\sum_{B \Subset \Z^2} \Phi_B(\eta_i \omega_{i^c}))}{\exp(-\sum_{B \Subset \Z^2} \Phi_B(0 \omega_{i^c})) + \exp(-\sum_{B \Subset \Z^2} \Phi_B(1 \omega_{i^c}))} \\[.1in]
    & = \frac{e^{-\sum_{\substack{B: i\in B}} \Phi_B(\eta_i \omega_{i^c}) -\sum_{\substack{B: i\not\in B}} \Phi_B(\eta_i \omega_{i^c})}}{e^{-\sum_{\substack{B: i\in B}} \Phi_B(0 \omega_{i^c}) -\sum_{\substack{B: i\not\in B}} \Phi_B(0 \omega_{i^c})} + e^{-\sum_{\substack{B: i\in B}} \Phi_B(1 \omega_{i^c}) -\sum_{\substack{B: i\not\in B}} \Phi_B(1 \omega_{i^c})}} \\[.1in]
    & = \frac{\exp \left(-\sum_{\substack{B: i\in B}} \Phi_B(\eta_i \omega_{i^c})\right)}{\exp \left (-\sum_{\substack{B: i\in B}} \Phi_B(0 \omega_{i^c}) \right ) + \exp \left( -\sum_{\substack{B: i\in B}} \Phi_B(1 \omega_{i^c}) \right)} \qquad(\text{where }B\Subset \Z^2).
    \end{split}
\end{equation*}
Therefore, $\pi_i(\cdot \mid \omega)$ depends only on those $\omega_j$ for which the pair of sites $i=(i_1,i_2) ,j=(j_1,j_2)$  are contained in some cross-shape $\cross$.
There are $12$ such points $j$, namely those given by
   $\{j \in \Z^2 \mid 0 < \Vert i - j \Vert_1 \le 2\}.
   $
The influence $I_{j \to i}$ can then be computed by exhausting all possible $\omega, \omega' \in \Omega$.
Moreover, $I_{j \to i} = 0$ whenever $\Vert i - j \Vert_1>2$. 
Since the system is shift invariant, we conclude that
   $$
   \alpha = \sup_{i \in \Z^2} \sum_{j \in \Z^2} I_{j \to i} = \sum_{0 < \Vert j \Vert_1 \le 2} I_{j \to 0}
   $$This quantity can be approximated by computer, and we obtain the claimed result.     
\end{proof}

\begin{remark}

An implication of the uniqueness of the Gibbs measure is the absence of long-range order. Let $U, V \Subset \Z^2$. The model specified by $\mu$ is considered disordered if, when $U$ is sufficiently distant from $V$, the value of $\omega_V$, has minimal influence on the marginal distribution on $U$, denoted by $\mu_U(\cdot \mid \omega_V)$.
To quantify this idea, we say that $\mu$ satisfies strong spatial mixing with (positive) constants $C,c$ if for any $U,V \Subset \Z^2$ and $\omega_V, \bar{\omega}_V \in \Omega_V$, we have
\begin{equation*}
   \tvd{\mu_U(\cdot \mid \omega_V) - \mu_U(\cdot \mid \bar{\omega}_V)} \le C |U| e^{-c d(U,V')},
\end{equation*}
where $d(\cdot,\cdot)$ is the $\ell_1$-distance in $\Z^2$ and $V' = \{v \in V, \omega_v \ne \bar{\omega}_v\}$ is the disagreement set.

 Since spatial mixing is defined and studied for finite graphs~\cite{peled2020three}, we can think that $U, V$ are contained in some large box $\Lambda \Subset \Z^2$, and let $\Lambda$ converge to $\Z^2$ as described next. 
 A sequence of finite regions $(\Lambda_n)_n$ is said to converge to $\Z^2$ in the sense of van Hove if $\lim_{n \to \infty} |\partial^{\text{in}} \Lambda_n|/|\Lambda_n| = 0$, where $\partial^{\text {in}} \lambda := \{i \in \Lambda, \exists j \notin \Lambda, i \sim j\}$. Denote this convergence by $\Lambda_n\Uparrow\Z^2$ (dependence on $n$ is often suppressed).

We say that a measure $\mu$ is {\em fully supported} if $\mu_\Lambda(\omega_\Lambda) > 0$ for all $\omega_\Lambda \in \{0,1\}^\Lambda$ and $\Lambda \Subset \Z^2$. Further, $\mu$ has a {\em finite interaction range} if
$I_{i \to j} = 0$ whenever $d(i,j)>R$ for some finite $R$.
The next theorem shows that Dobrushin's uniqueness condition implies strong spatial mixing.
\begin{theorem}~{\rm\cite[Thm 1.1]{peled2020three}}\label{thm: Spatial mixing}
    Let $\mu$ be a fully supported probability measure with finite interaction range $R$. 
Suppose that~\eqref{eq:uniqueness} holds, then $\mu$ satisfies strong spatial mixing with $C = 1$ and
    $c = - \frac{1}{R}\log \alpha$; in other words, 
    \begin{equation*}
        \tvd{\mu_U(\cdot \mid \omega_V) - \mu_U(\cdot \mid \bar{\omega}_V)} \le |U| \alpha^{d(U,\bar V)/R}.
    \end{equation*}
\end{theorem}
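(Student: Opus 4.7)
The plan is to construct a coupling of two copies of the single-site heat-bath Glauber dynamics—one with boundary condition $\omega_V$ and the other with $\bar\omega_V$—and to track the propagation of disagreement through iterated applications of the influence matrix $I = (I_{j\to i})_{i,j}$. Because $\mu$ is fully supported with finite interaction range $R$, the Gibbs sampler on any finite region $\Lambda \Supset U$ is irreducible and has the finite-volume conditional measure as its unique stationary distribution, so an exhaustion $\Lambda_n \Uparrow \Z^2$ lets us argue on a finite region and pass to the limit.

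Run both chains on a common family of rate-$1$ Poisson clocks at each site; when the site-$i$ clock rings, update both chains using the maximal coupling of the two single-site kernels $\pi_i(\cdot\mid\omega(t))$ and $\pi_i(\cdot\mid\bar\omega(t))$. By the very definition of $I_{j\to i}$, the probability that the two chains disagree at $i$ immediately after the update is at most $\sum_{j} I_{j\to i}\,\mathbf{1}\{\text{they disagreed at }j\text{ before the update}\}$. Writing $p_t(u):=\Pr[\omega_u(t)\ne \bar\omega_u(t)]$ and $V':=\{v:\omega_v\ne\bar\omega_v\}$, an infinitesimal-generator computation gives the componentwise inequality $\tfrac{d}{dt}p_t \le (I-\mathrm{Id})p_t$, hence $p_t \le e^{t(I-\mathrm{Id})}\mathbf{1}_{V'}$. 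The hypothesis $\alpha<1$ yields $\|I^k\|_\infty \le \alpha^k$ for every $k\ge 0$, so the matrix exponential converges with geometric decay.

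The decisive ingredient is the interplay with the finite interaction range $R$: since $I_{j\to i}=0$ whenever $d(i,j)>R$, each application of $I$ can propagate the support of a vector by at most $R$ units of graph distance. Consequently $(I^k\mathbf{1}_{V'})(u)=0$ whenever $k<d(u,V')/R$, and the dominant spatial contribution to $p_t(u)$ in the stationary limit $t\to\infty$ comes from powers $k\ge d(u,V')/R$; combined with $\|I^k\|_\infty \le \alpha^k$ this yields $p_\infty(u)\le \alpha^{d(u,V')/R}$. Summing over $u\in U$ via the standard coupling bound $\tvd{\mu_U(\cdot\mid\omega)-\mu_U(\cdot\mid\bar\omega)}\le \sum_{u\in U}p_\infty(u)$ delivers the stated inequality $|U|\,\alpha^{d(U,V')/R}$. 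The main obstacle will be the careful passage $t\to\infty$ while preserving the sharp exponent: one must verify that the stationary coupling is genuinely a coupling of the two infinite-volume Gibbs measures (which uses irreducibility of Glauber dynamics, and hence the full-support and finite-range hypotheses) and isolate the leading power in the Poisson-weighted series $e^{-t}\sum_{k\ge 0}\tfrac{t^k}{k!}I^k \mathbf{1}_{V'}$, controlling it uniformly by the smallest admissible $k = \lceil d(U,V')/R\rceil$ using the combination of row-sum decay and support propagation.
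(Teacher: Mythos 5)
This theorem is imported verbatim from \cite{peled2020three}; the paper supplies no proof of it, so there is no internal argument to compare yours against. Judged on its own, your coupling strategy is a viable and fairly standard route to statements of this type, and in fact the step you flag as the main obstacle is immediate: the terms with $k < k_0 := \lceil d(u,V')/R\rceil$ vanish by support propagation, while for $k \ge k_0$ one has $(I^k \mathbf{1}_{V'})(u) \le \alpha^k \le \alpha^{k_0}$, so $e^{-t}\sum_{k\ge 0}\frac{t^k}{k!}(I^k\mathbf{1}_{V'})(u) \le \alpha^{k_0}$ \emph{uniformly in $t$}; no isolation of a leading power is needed, and the limit $t\to\infty$ is handled simply by applying the coupling inequality at each finite time and using convergence of the finite-volume dynamics (irreducible by full support) to the two conditional measures. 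Two points do need care before this is a proof. First, $I_{j\to i}$ is defined by comparing configurations that differ at a \emph{single} site, so the claimed bound $\sum_j I_{j\to i}\mathbf{1}\{\text{disagree at }j\}$ on the post-update disagreement probability requires a telescoping interpolation through configurations differing one site at a time. Second, the sites of $V$ are frozen and never updated, so the componentwise inequality $\tfrac{d}{dt}p_t \le (I-\mathrm{Id})p_t$ cannot hold on $V'$ (where $p_t\equiv 1$); the differential inequality must be written only for $u\notin V$, with $\mathbf{1}_{V'}$ entering as a source term, and one must check that the resulting Duhamel representation still produces the same exponent. Neither issue is fatal, and with these repairs your argument does recover the stated bound with $C=1$.
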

To draw conclusions from this result for our recoverable system $\cX_0,$ recall that for the Gibbs measure defined by 
\eqref{eq:specification}, \eqref{eq:potential}, the interaction radius is $R = 2$ by the Markov condition, a fact that follows by the Hammersley-Clifford theorem (e.g., \cite[Thm.7.12]{grimmett2018probability}). Thus, Theorem~\ref{thm: Spatial mixing} applies, showing that our system $\cX_0$ does not have long-range interactions. \hfill$\triangleleft$
\end{remark}

\subsection{Information storage and error probability}\label{sec: error}

\subsubsection{Mixing rate}
Writing the data relying on recoverable configurations supports restoring the value of the site (or creating a replacement site) when it is lost due to physical degradation. 
At the same time, the recovery rule involves interdependence of the sites, and if the restoration cycle is performed many times, the stored information may be irrevocably lost. 
The number of steps before this happens depends on the temperature of the system: at high temperature it becomes more disordered, and information retention is less likely. 
In this section, we quantify this remark by modeling the recovery cycles as steps of a Markov process, namely a discrete-time Glauber chain. 

Fix a finite region $\Lambda\Subset \Z^2$. Let $\omega^e$ ($\omega^o$) be the configuration such that $\omega_i=1$ if 
$i_1+i_2$ is even (resp., odd) and $0$ otherwise.  
Clearly both $\omega^e$ and $\omega^o$ belong to $\cX_0$. Let $\Omega_\Lambda^e=\{\omega \mid \omega_{\Lambda^c}=\omega^e_{\Lambda^c}\}$
be the set of configurations that match the even boundary condition, and let $\Omega_\Lambda^o$ be the same for the odd boundary condition.

Let $\omega\in \Omega_\Lambda^e$. Below, we view the restricted
configuration $\omega_{\Lambda}$ as the place to store information and consider the evolution of the recovery process applied to it. The process evolves in discrete time, updating the configuration at time $t-1$, $X_{t-1}$, to $X_t$, where $X_0 := \omega$ is the initial state. At time $t$, the process chooses a site $j \in \Lambda$ uniformly at random and updates the value stored at $j$ by sampling from the one-vertex probability distribution $\pi_j(\cdot \mid X_{t-1})$, cf.~\eqref{eq:specification}.  Questions of this kind were previously considered in \cite{goldfeld2021ising}.

The information capacity at time $t$ is defined by
\[
I_{\Lambda}^{(\beta)}(t) := \max_{p_{X_0}}I(X_0;X_t),
\]
where $p_{X_0}$ is the probability distribution of the initial configuration, and $I(\cdot;\cdot)$ is the mutual information.
This quantity characterizes the number of bits that can be stored in the system after $t$ corrections if we assume that errors are uniformly random, independently located, and no two errors happen simultaneously.
\begin{definition}[Mixing time]
    Let $\{X_t^{x_0}\}_{t=0}^{\infty}$ be a finite state Markov chain starting with state $x_0 \in \cX_{{\Lambda}}$ with transition matrix $P$
    and stationary distribution $\pi$, where $\cX_\Lambda$ is the state space, $|\cX_{{\Lambda}}| < \infty$.
    Let $P^t(x_0,\cdot)$ be the probability distribution of the chain at time $t$, and denote by $X_t^{x_0}$ the random state at time $t$. 
    Define
    \begin{equation*}
        d(t) := \max_{x \in \cX_\Lambda} \| P^t(x, \cdot) - {\pi} \|_{TV}.
    \end{equation*}
    The mixing time is defined by 
    \begin{equation*}
        t_{\rm mix}(\epsilon) = \min \{t : d(t) \le \epsilon\}.
    \end{equation*}
\end{definition}

Let $\{X_t^{x}\}_{t=0}^{\infty}$, $\{X_t^{y}\}_{t=0}^{\infty}$ be two Glauber chains of our system, where $x,y$ are the initial configurations.
If $t \ge t_{\text{mix}}(\epsilon)$, then $\| P^t(x,\cdot) - P^t(y, \cdot) \|_{TV} \le 2 \epsilon$ by the triangle inequality,
and therefore,  $|P^t(x,z)-P^t(y,z)|\le 4\epsilon$ for all $z\in \cX_\Lambda$.
Relying on this, one can show that $H(X_t|X_0)\approx H(X_t)$ and $I(X_0;X_t)\approx 0$, irrespective of $p_{X_0}$.

In conclusion, after $t_{\text{mix}}(\epsilon)$ steps of the Glauber chain, the configuration is almost independent of the initial state, i.e., $I_\Lambda^{(\beta)}(t_{\text{mix}})$ is small, and the system does not retain even a single bit of information. This establishes a negative result: if the storage device contains $n$ sites and the system operates at high
temperature, information is not retained beyond $t_{\text{mix}}(\epsilon)$ recovery cycles. Formally,
this argument is summarized in the following theorem.

\begin{theorem}\label{thm:fast-mixing}
    Consider the Glauber dynamics for the finite recoverable system on $\Lambda \Subset \Z^2$ with ${|\Lambda|=}n$ vertices and let $c_\beta = 13 - \frac{24}{1+e^{-{10}\beta}} > 0$ (thus $\beta < \frac{1}{{10}} \ln \frac{13}{11} \approx {0.0167}$), then 
    \begin{equation}\label{eq:mixing_time}
        t_{\text\rm {mix}}(\epsilon) \le \left \lceil \frac{n}{c_\beta} \left(\ln n + \ln \frac{1}{\epsilon} \right) \right \rceil.
    \end{equation}
\end{theorem}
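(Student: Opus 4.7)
The plan is to apply the path-coupling method of Bubley and Dyer. Equip the state space $\Omega_\Lambda^e$ with the Hamming distance $d$, whose diameter equals $n$. It suffices to construct, for every pair $(X,Y)\in\Omega_\Lambda^e\times\Omega_\Lambda^e$ differing at exactly one site $j\in\Lambda$, a coupling $(X_1,Y_1)$ of one Glauber step satisfying $\mathbb{E}[d(X_1,Y_1)\mid X,Y]\le 1-c_\beta/n$. By the standard path-coupling theorem, this contraction for adjacent pairs extends to all pairs, and combining $1-c_\beta/n\le e^{-c_\beta/n}$ with the diameter bound $n$ yields \eqref{eq:mixing_time}.

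For the coupling, the Glauber chain selects $i\in\Lambda$ uniformly at random and resamples from $\pi_i(\cdot\mid\omega)$; I couple by the maximal coupling of the Bernoulli kernels $\pi_i(\cdot\mid X)$ and $\pi_i(\cdot\mid Y)$. Three cases arise: (i) if $i=j$, then since $\pi_i(\cdot\mid\omega)$ depends only on the values of $\omega_k$ at sites $k\neq i$ that share some cross with $i$, and $X,Y$ agree on all such $k$, the two kernels coincide and the coupling forces $X_1(i)=Y_1(i)$ almost surely, decreasing $d$ by one; (ii) if $i$ lies in the twelve-site influence neighborhood $\{k:0<\Vert k-j\Vert_1\le 2\}$ of $j$, the two chains disagree at $i$ with probability at most $I_{j\to i}$; (iii) if $\Vert i-j\Vert_1>2$, the kernels coincide and no new disagreement arises.

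The main quantitative step is a uniform upper bound on $I_{j\to i}$ in case (ii). Writing $\pi_i(1\mid\omega)=\sigma(\Delta_i(\omega))$ with $\sigma(x)=1/(1+e^{-x})$ and
\[
\Delta_i(\omega)\;:=\;\sum_{B\ni i}\bigl[\Phi_B(0\omega_{i^c})-\Phi_B(1\omega_{i^c})\bigr],
\]
exactly five crosses contain $i$ (namely $\cross_i$ together with $\cross_{i\pm e_k}$ for the two axis directions), and by \eqref{eq:potential} each summand lies in $\{-2\beta,0,+2\beta\}$. Hence $\Delta_i(\omega)\in[-10\beta,10\beta]$, and flipping a single site of $\omega$ changes $\Delta_i$ by at most $20\beta$. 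The elementary sigmoid identity $\sup_{|a-b|\le M}|\sigma(a)-\sigma(b)|=\tanh(M/4)$ then yields $I_{j\to i}\le\tanh(5\beta)$ uniformly in $j$ and in the configuration.

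Summing over the twelve influence neighbors of $j$, the expected one-step drift is at most
\[
\frac{1}{n}\bigl(-1+12\tanh(5\beta)\bigr)\;=\;-\frac{c_\beta}{n},
\]
where the algebraic identity $1-12\tanh(5\beta)=13-24/(1+e^{-10\beta})$ also reveals that $c_\beta>0$ precisely when $\beta<\tfrac{1}{10}\ln(13/11)$. Inserting this one-step contraction into the path-coupling mixing-time theorem (with diameter $n$ and geometric-type rate $c_\beta/n$) produces \eqref{eq:mixing_time}. The principal obstacle is the combinatorial bookkeeping: confirming that exactly five crosses contain a fixed vertex $i$, that the union of those crosses minus $i$ coincides with $\{k:0<\Vert k-i\Vert_1\le 2\}$ and has twelve elements, and that each cross contribution to $\Delta_i$ lies in $\{\pm 2\beta,0\}$. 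Once this bookkeeping is in place, the uniform bound $\tanh(5\beta)$ per influence and the summation over twelve sites are immediate.
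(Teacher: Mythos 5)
Your proposal is correct and follows essentially the same route as the paper's proof in Appendix~B: a single-site contraction estimate $\E[d(X_1,Y_1)]\le 1-c_\beta/n$ for Hamming-adjacent pairs, obtained from the bound of $\tanh(5\beta)=\tfrac{2}{1+e^{-10\beta}}-1$ on each of the $12$ influences $I_{j\to i}$ with $0<\Vert i-j\Vert_1\le 2$, then extended to all pairs and iterated. The only cosmetic difference is that you invoke the Bubley--Dyer path-coupling theorem where the paper builds the grand coupling explicitly and extends the contraction via the triangle inequality, which is the same argument.
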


The proof relies on coupling of Markov chains and is given in Appendix~\ref{app:coupling}.

\subsubsection{Error probability} \label{sec:lotemp}

In this section, we estimate the error probability of recovery of a particular site in the configuration at low temperature. 
Let $\Lambda \Subset \Z^2$. 
For every configuration $\omega \in \Omega_{\Lambda }^e$, denote by $\cE(\omega) = \{ i \mid \Phi_{\cross_i}(\omega) = \beta\}$ the collection of sites in $\omega$ that do not satisfy the recovery condition. 
Note that $\cE(\omega) \subset \Lambda \cup \partial \Lambda$. 
To shorten the writing, let us denote by $\Lambda^+ := \Lambda \cup \partial \Lambda$, and introduce the notation $\varepsilon_\omega(A) = |\cE(\omega) \cap A|$ for $A \Subset \Z^2$.
The Hamiltonian $\sH_{\Lambda; \Phi}(\omega)=\sH_{\Lambda ; \beta}(\omega)$ takes the form
\begin{equation}\label{eq:hami}
    \sH_{\Lambda; \beta} (\omega) = - \beta |\Lambda^+| + 2\beta \varepsilon_\omega(\Lambda^+).
\end{equation}
The Gibbs distribution with even boundary condition is given by 
\begin{equation}\label{eq:Gibbs}
    \mu_{\Lambda;\beta}^e(\omega) := \frac{e^{-\sH_{\Lambda;\beta}(\omega)}}{Z_{\Lambda;\beta}^e},
\end{equation}
where the partition function is 
\begin{equation}\label{eq:partition}
\begin{aligned}
    Z_{\Lambda ;\beta}^e &= \sum_{\omega \in \Omega_\Lambda^e} e^{-\sH_{\Lambda;\beta}(\omega)}=
    e^{\beta|\Lambda^+|} \sum_{\omega\in\Omega_\Lambda^e} e^{-2\beta \varepsilon_\omega(\Lambda^+)}.
\end{aligned}
\end{equation}
Therefore, for $\omega \in \Omega_\Lambda^e$, we have 
\begin{equation}\label{eq:Gibbs1}
    \mu_{\Lambda;\beta}^e (\omega)= \frac{e^{-2\beta \varepsilon_\omega(\Lambda^+)}}
    {\sum_{\omega \in\Omega_\Lambda^e} e^{-2\beta\varepsilon_\omega(\Lambda^+)}}
\end{equation}
In the following proposition, we limit ourselves to errors at sites within $\Lambda$.
\begin{proposition}\label{prop:local_pertarbation}
For all $\beta > 0$ and $i \in \Lambda$, there exists a constant $C \le 16$ such that 
$$
\mu_{\Lambda;\beta}^e(\{\omega \in \Omega_{\Lambda}^e \mid i \in \mathcal{E}(\omega)\}) \le C e^{-2\beta}.
$$
\end{proposition}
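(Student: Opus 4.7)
The strategy is a Peierls-type bijection argument. Since the Gibbs weight in \eqref{eq:Gibbs1} satisfies $\mu_{\Lambda;\beta}^e(\omega)\propto e^{-2\beta\varepsilon_\omega(\Lambda^+)}$, each additional violated cross costs a Boltzmann factor $e^{-2\beta}$, and the constant $C\le 16$ should emerge as the number of local patterns on $\cross_i$ that violate~\eqref{eq:recovery}. I decompose $\{i\in\cE(\omega)\}$ according to $\tau=\omega_{\cross_i}\in\{0,1\}^5$: of the $32$ possible patterns on $\cross_i$, exactly $16$ are violating, namely the empty pattern $\tau^{(0)}=0^{\cross_i}$ (``case b'') and the $15$ patterns with $\tau_i=1$ and $\sum_\ell\tau_{j_\ell}\ge 1$ (``case a''). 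Writing $V$ for this set, $\mu_{\Lambda;\beta}^e(i\in\cE)=\sum_{\tau\in V}\mu_{\Lambda;\beta}^e(\omega_{\cross_i}=\tau)$, so it suffices to bound each summand by $e^{-2\beta}$.

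For each $\tau\in V$ I construct an injective map $T_\tau\colon D_\tau\to\Omega_\Lambda^e$, where $D_\tau=\{\omega\in\Omega_\Lambda^e:\omega_{\cross_i}=\tau\}$, that alters $\omega$ only on $\cross_i$ and satisfies $\varepsilon_{T_\tau(\omega)}(\Lambda^+)\le\varepsilon_\omega(\Lambda^+)-1$. Injectivity on $D_\tau$ is automatic: $T_\tau$ preserves $\omega_{\cross_i^c}$ while the input is pinned at $\omega_{\cross_i}=\tau$, so two elements of $D_\tau$ with the same image must agree everywhere. Granted this, \eqref{eq:Gibbs1} yields
\[
\sum_{\omega\in D_\tau}e^{-2\beta\varepsilon_\omega(\Lambda^+)}\;\le\; e^{-2\beta}\!\sum_{\omega\in D_\tau}e^{-2\beta\varepsilon_{T_\tau(\omega)}(\Lambda^+)}\;\le\; e^{-2\beta}Z_{\Lambda;\beta}^e,
\]
hence $\mu_{\Lambda;\beta}^e(\omega_{\cross_i}=\tau)\le e^{-2\beta}$; summing over $|V|=16$ then gives $C\le 16$.

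Case b is straightforward: $T_{\tau^{(0)}}(\omega)$ simply flips $\omega_i$ from $0$ to $1$, resolving the error at $i$ while at each neighbor $j_\ell$ the new value $\omega_i=1$ makes the rule at $j_\ell$ demand $\omega_{j_\ell}=0$, which holds, so no new error appears. In case a, $T_\tau(\omega)$ flips $\omega_i$ from $1$ to $0$ and additionally toggles $\omega_{j_\ell}$ from $0$ to $1$ at every ``bad'' neighbor $\ell$, meaning $\tau_{j_\ell}=0$ and $s'_\ell=0$, where $s'_\ell$ denotes the sum of the three neighbors of $j_\ell$ other than $i$ (all living in $\cross_i^c$, hence determined by $\omega_{\cross_i^c}$). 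The hard part is controlling the side effect of each toggle $\omega_{j_\ell}\colon 0\to 1$ on the distance-$2$ crosses $\cross_k$ that contain $j_\ell$. The key geometric observation is that the three sites entering $s'_\ell$ for a bad $\ell$ are precisely the centers of those distance-$2$ crosses, so $s'_\ell=0$ forces $\omega_k=0$ at each such center; flipping a $0$-neighbor of a $0$-center from $0$ to $1$ can only turn a violated cross into a satisfied one, never the reverse. Combined with a short case split showing that the cross at $i$ is satisfied after the modification (some $\omega_{j_{\ell'}}=1$ from an $\ell'$ with $\tau_{j_{\ell'}}=1$ always survives in case a) and that each $\cross_{j_\ell}$ keeps or loses an error, this yields the required $\varepsilon_{T_\tau(\omega)}\le\varepsilon_\omega-1$ and completes the argument.
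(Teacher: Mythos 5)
Your proof is correct and is essentially the paper's argument in a different packaging: the paper defines a single map $f_i$ on $\{\omega : i\in\cE(\omega)\}$ that performs exactly your surgery (flip the center; if the center was $1$, also raise to $1$ the newly orphaned all-zero neighbors) and bounds its multiplicity by $2^4=16$ via the fact that preimages can differ only on the four neighbor sites, whereas you partition the event into the $16$ violating patterns on $\cross_i$ and obtain an injection with energy drop $\ge 1$ for each — the same count viewed from the other side. The only point that both your write-up and the paper's pass over quickly is verifying that the surgery never modifies a site of $\cross_i$ lying in $\partial\Lambda$, so that the image indeed remains in $\Omega_\Lambda^e$.
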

\begin{proof}
For $i\in \Lambda$ we have
\begin{equation}\label{eq:Pe}
\mu_{\Lambda;\beta}^e(\{\omega \in \Omega_{\Lambda}^e \mid i \in \mathcal{E}(\omega)\})
=e^{-2\beta}\frac{\sum_{\omega \in \Omega_{\Lambda}^e : i \in \cE(\omega)}e^{-2\beta \varepsilon_\omega(\Lambda^+ \backslash\{i\})}}
{\sum_{\omega \in\Omega_\Lambda^e} e^{-2\beta \varepsilon_\omega(\Lambda^+)}}.
\end{equation}
where the leading term $e^{-2\beta}$ is due to the fact that $i \in \cE(\omega) \cap \Lambda$. 

We will show that the fraction on the RHS in \eqref{eq:Pe} is bounded above by a constant $C \le 16$. 
To this end, we construct, for each site $i \in \Lambda$, a function $f_i:\Omega_{\Lambda}^{e} \to \Omega_{\Lambda}^{e}$ that, given a configuration $\eta \in \Omega_{\Lambda}^e$ with $i \in \cE(\eta) \cap \Lambda$, modifies $\eta$ only within the cross $\cross_i$, yielding a new configuration $\omega = f_i(\eta)$ with $\cE(\omega) \subseteq \cE(\eta) \setminus \{i\}$. Then, taking $C = \max\limits_{i \in \Lambda, \omega \in \Omega_\Lambda^e} |f_i^{-1}(\omega)|$, we would get the inequality
$$
\sum_{\eta \in \Omega_\Lambda^e : i \in \cE(\eta)} e^{-2\beta \varepsilon_\eta(\Lambda^+ \backslash\{i\})}
\le C\sum_{\omega\in \Omega_\Lambda^e}e^{-2 \beta \varepsilon_\omega(\Lambda^+)}.
$$
It remains to construct $f_i$. 
Given $\eta \in \Omega_{\Lambda}^e$ with $i \in \cE(\eta)\cap\Lambda$, we define $\omega = f_i(\eta)$ through the following steps:
\begin{itemize}
\item[(1)] Set $\omega \leftarrow \eta$.
\item[(2)] If $\omega_i = 0$, set $\omega_i \leftarrow 1$ and go to Step (4).
\item[(3)] If $\omega_i = 1$:
\begin{itemize}
\item[(3a)] set $\omega_i \leftarrow 0$;
\item[(3b)] for all $j \in \cross_i \cap \cE(\omega)$ such that $\omega_j = 0$, set $\omega_j \leftarrow 1$.
\end{itemize}
\item[(4)] Return $f_i(\eta) = \omega$.
\end{itemize}
We claim that $\cE(\omega) \subseteq \cE(\eta) \setminus \{i\}$. To see this, first observe that if an error at any site $j$ is due to $\omega_j$ being $0$, then flipping $\omega_j$ corrects that error without creating any new errors. In particular, if the condition in Step (2) is met, then we have $\cE(\omega) = \cE(\eta) \setminus \{i\}$. If instead, we are in the situation of Step~(3), then the error at site $i$ is corrected by flipping $\omega_i$, but in the process we may create new errors in the neighboring sites $j \in \cross_i \setminus \{i\}$. Note, however, that no \emph{new} error is created at a neighboring site $j$ with $\omega_j = 1$. (Indeed, if such a site $j$ is in $\cE(\omega)$ after setting $\omega_i \leftarrow 0$, it must have been in error even before $\omega_i$ was flipped, i.e., it must have been the case that $j \in \cE(\eta)$.) 
On the other hand, the act of flipping $\omega_i$ may create new errors at neighboring sites $j$ such that $\omega_j = 0$. But each such error can be corrected, without creating any further errors, by flipping $\omega_j$ in Step~(3b). It follows that $\cE(\omega) \subseteq \cE(\eta) \setminus\{i\}$, as claimed.
To see that $f_i(\omega) \in \Omega_\Lambda^e$, note that the described procedure does not make any changes
outside $\Lambda$, Indeed, it does not examine any sites in $(\Lambda\cup\partial\Lambda)^c$ and it does not
change the values $\eta_j,j \in \partial\Lambda$ since every $0$ in $\Lambda^c$ is adjacent to at least one $1$ in $\Lambda^c$. Thus $j \notin \cE(\omega)$. 

Finally, we claim that $|f_i^{-1}(\omega)| \le 16$ for any $\omega \in \Omega_{\Lambda}^e$. This is because any $\eta \in f_i^{-1}(\omega)$ differs from $\omega$ only in the five sites within the cross $\cross_i$, and moreover, all $\eta \in  f_i^{-1}(\omega)$ take the same value at the center of the cross: $\eta_i = 1 \oplus \omega_i$, where $\oplus$ is mod-2 addition.
From this, we conclude that $C = \max\limits_{i,\omega} |f_i^{-1}(\omega)| \le 2^4 = 16$.
\end{proof}

As a conclusion, at sufficiently low temperatures, the expected size of the error set $\cE(\cdot)$ is small, and so typical configurations mostly arise as local perturbations of exactly recoverable configurations $\omega\in \cX_0$.

\section{Entropy of the system}\label{sec:entropy}
For a finite system on $\Lambda \Subset \Z^2$ and a boundary condition $\omega$, the entropy rate is defined by 
\begin{equation}\label{eq:top1}
    H\left (\pi_{\Lambda}(\cdot \mid \omega) \right ) := - \frac{1}{|\Lambda|} \sum_{\tau_\Lambda \in \Omega_\Lambda} \pi_{\Lambda}(\tau_\Lambda \mid \omega) \log_2 \pi_{\Lambda}(\tau_\Lambda \mid \omega).
\end{equation}

\subsection{Zero temperature}
At zero temperature, a Gibbs measure is supported on the set of ground states. 
% \Gnote{By eq~\eqref{eq:potential}, ground states have $-\infty$ Hamiltonian.}
Let $\cG_{\Lambda}$ be the set of the ground state configurations restricted to $\Lambda$.
The zero-temperature entropy (also called the topological entropy) is defined as the limit
\begin{equation}\label{eq:topo_entropy}
    H_0:= \lim_{m,n \to \infty}\frac{\log_2 |\cG_{\Z_{m \times n}}|}{mn},
 \end{equation}
where $\Z_{m \times n}$ is the $m\times n$ box $\{-\lfloor\frac m2\rfloor,\dots,\lfloor\frac m2\rfloor\}\times\{-\lfloor\frac n2\rfloor,
\dots,\lfloor\frac n2\rfloor\}$.
If $\mu$ is a translation-invariant {\em zero-temperature Gibbs measure}, then $H(\mu) \le H_0$. 
Moreover, there exists a translation-invariant zero-temperature Gibbs measure $\mu$ such that $H(\mu) = H_0$ 
(see, e.g., \cite[Appendix B2]{van1993regularity} for a detailed discussion of the definitions).

We begin with a general characterization of the entropy $H_0$.
Let us fix $\omega \in \Omega$ and denote by $\cG^\omega_\Lambda$ the set of configurations $\tau$ that minimize the number of recovery rule violations within $\Lambda\cup\partial\Lambda$ under the constraint $\tau_{\Lambda^c} = \omega_{\Lambda^c}$.
Note that if $\beta < \infty$, the set $\cG^\omega_{\Lambda}$ coincides with the set of configurations that minimize the Hamiltonian $\sH_{\Lambda;\Phi}(\tau)$ under the boundary condition $\tau_{\Lambda^c} = \omega_{\Lambda^c}$. 

The conditional restriction of a zero-temperature Gibbs measure on $\Z_{m \times n}$ is the uniform distribution on 
$\cG^\omega_{\Z_{m \times n}}$. Below we write $\cG^\omega_{m\times n}$ (resp.\ $\cG_{m\times n}$) instead of $\cG^\omega_{\Z_{m \times n}}$ (resp.\ $\cG_{\Z_{m \times n}}$) to
save on notation.
\begin{proposition}\label{prop:uniqueness_limit}
        For every boundary condition $\omega$, we have
        \begin{equation*}
            \lim_{m,n\to \infty} \frac{\log_2 |\cG^\omega_{{m \times n}}|}{mn} = H_0.
        \end{equation*}
\end{proposition}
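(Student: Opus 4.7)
The strategy is a two-sided sandwich comparing $|\cG^\omega_{m\times n}|$ to $|\cG_{(m-O(1))\times(n-O(1))}|$, showing the two agree up to a factor $2^{O(m+n)}$, so that after dividing by $mn$ and letting $m,n\to\infty$, both quantities share the exponential rate $H_0$.

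\textit{Confinement of violations.} The technical core of the argument is a confinement lemma: every minimizer $\tau\in\cG^\omega_\Lambda$ has its violation set $\cE(\tau)$ contained in a strip $\{i\in\Lambda : d(i,\partial\Lambda)\le k_0\}$ for a fixed constant $k_0$. The argument is a local-repair analysis patterned on the construction of the map $f_i$ in the proof of Proposition~\ref{prop:local_pertarbation}: at any deep-interior violation $i$, one either flips $\tau_i$ alone (if $\tau_i=0$, the isolated-zero case) or flips $\tau_i$ together with any neighbors that become isolated zeros (if $\tau_i=1$, the adjacent-ones case), strictly decreasing the total violation count and contradicting the minimality of $\tau$. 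The adjacent-ones case is the main obstacle, requiring a finite case analysis on the $5\times 5$ patch around $i$ to ensure that the cascade of repairs terminates with strictly fewer violations.

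\textit{Upper bound.} Fix $k>k_0$ and set $\tilde\Lambda=\Z_{(m-2k)\times(n-2k)}$. By the confinement lemma, $\tau|_{\tilde\Lambda}$ is violation-free at every cross $\cross_i$ with $i\in\tilde\Lambda$. Strong irreducibility of the MIS subshift $\cX_0$---any locally valid finite pattern extends to a global MIS by padding with the even pattern $\omega^e$---then yields $\tau|_{\tilde\Lambda}\in\cG_{\tilde\Lambda}$. Since the restriction map $\tau\mapsto\tau|_{\tilde\Lambda}$ has fibers of size at most $2^{|\Lambda\setminus\tilde\Lambda|}=2^{O(m+n)}$, we obtain
\[
|\cG^\omega_{m\times n}|\le 2^{O(m+n)}\,|\cG_{(m-2k)\times(n-2k)}|.
\]

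\textit{Lower bound.} Set $\tilde{\tilde\Lambda}=\Z_{(m-2k-2)\times(n-2k-2)}$ with width-$1$ inner collar $B=\tilde\Lambda\setminus\tilde{\tilde\Lambda}$. For any $\tau^\star\in\cG^\omega_\Lambda$ and any $\sigma\in\cG_{\tilde\Lambda}$ with $\sigma|_B=\tau^\star|_B$, the spliced configuration $\tilde\tau$ (defined to equal $\tau^\star$ on $\Lambda\setminus\tilde{\tilde\Lambda}$ and $\sigma$ on $\tilde{\tilde\Lambda}$) evaluates every cross $\cross_i$ with $i\in\Lambda$ identically to either $\tau^\star$ or to the locally valid $\sigma$, giving $|\cE(\tilde\tau)|=|\cE(\tau^\star)|$ and hence $\tilde\tau\in\cG^\omega_\Lambda$. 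The map $\sigma\mapsto\tilde\tau$ is injective. Partitioning $\cG_{\tilde\Lambda}$ by collar value produces at most $2^{|B|}=2^{O(m+n)}$ classes; pigeonhole then delivers a collar $c^\star$ shared by at least $|\cG_{\tilde\Lambda}|/2^{|B|}$ interior ground states, and a further appeal to strong irreducibility (extending an interior pattern with collar $c^\star$ outward with the optimal boundary filler) supplies a minimizer $\tau^\star\in\cG^\omega_\Lambda$ with $\tau^\star|_B=c^\star$. Applying the splicing construction to this $\tau^\star$ yields
\[
|\cG^\omega_{m\times n}|\ge 2^{-O(m+n)}\,|\cG_{(m-2k)\times(n-2k)}|.
\]
Dividing both bounds by $mn$ and sending $m,n\to\infty$ gives $H_0$. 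The main obstacle, as noted, is the confinement lemma: the cascade analysis in the adjacent-ones case requires meticulous verification, while the splicing argument and the strong-irreducibility step are relatively routine.
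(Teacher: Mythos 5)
Your overall strategy is the same as the paper's: confine the violations of any minimizer to an $O(1)$-width boundary strip via the local-repair map $f_i$ of Proposition~\ref{prop:local_pertarbation}, then sandwich $|\cG^\omega_{m\times n}|$ between $2^{\pm O(m+n)}\,|\cG_{(m-O(1))\times(n-O(1))}|$. Your confinement lemma and upper bound match the paper's argument (the paper states the inclusion $\cG^\omega_{(m\times n);k}\subseteq\cG_{(m-k)\times(n-k)}$ without the extendability justification you supply), and your lower bound fills in a step the paper only asserts, namely $|\cG^\omega_{m\times n}|\gtrsim|\cG_{(m-k)\times(n-k)}|$.

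Two points in your lower bound need repair. First, a width-$1$ collar $B$ is too thin for the splicing claim as stated: a cross $\cross_i$ centered at $i\in B$ contains one site in $\tilde{\tilde\Lambda}$ (carrying a $\sigma$-value) and one site outside $\tilde\Lambda$ (carrying a $\tau^\star$-value), so it does not ``evaluate identically to either $\tau^\star$ or $\sigma$'' and could be violated in $\tilde\tau$ even though $i\notin\cE(\tau^\star)$. Taking $B$ of width $2$ makes every cross pure and the identity $|\cE(\tilde\tau)|=|\cE(\tau^\star)|$ correct. Second, the existence of a minimizer $\tau^\star$ with the popular collar $c^\star$ is the real content of the lower bound and cannot be dismissed in a parenthetical: one must take an arbitrary minimizer $\tau_0$, extend its violation-free middle annulus to a global MIS, use strong irreducibility (with separation $\ge 4$) to produce a single element of $\cX_0$ agreeing with that annulus \emph{and} with a chosen interior pattern, and splice it into $\tau_0$ along the annulus so that every cross is pure and the violation count is exactly that of $\tau_0$. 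Once this gluing is written out, it in fact yields a minimizer with \emph{any} prescribed extendable interior, so your pigeonhole over collar values becomes unnecessary and you recover the cleaner bound $|\cG^\omega_{m\times n}|\ge|\cG_{(m-O(1))\times(n-O(1))}|$ that the paper uses directly.
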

\begin{proof}
    Take $m,n$ to be suitably large.
    Note that for all $\eta \in \cG^\omega_{m \times n}$, all values stored inside the box $\Z_{m \times n}$ follow the recovery rule, except for the values at the boundary sites.
    In other words, $\cE(\eta) \cap \Z_{m \times n} \subseteq \Z_{m \times n} \setminus \Z_{(m-2) \times (n-2)}$. 
    To see this, let $i \in \Z_{(m-2) \times (n-2)}$ be such that $i \in \cE(\eta)$. Then, applying $f_i$ defined in the proof of Proposition~\ref{prop:local_pertarbation} reduces the Hamiltonian of $\eta$ without touching the boundary sites, contradicting the fact that $\eta \in \cG^\omega_{m \times n}$.
    Let $k>2$ be a constant and denote by $\cG^\omega_{(m \times n);k}$ the  restriction of $\cG^\omega_{m \times n}$ to the box $\Z_{(m-k) \times (n-k)}$. We then have $\cG^\omega_{(m \times n);k} \subseteq \cG_{(m-k) \times (n-k)}$.
    Therefore,
    \begin{equation*}
        \begin{split}
            |\cG^\omega_{m \times n}| & \le 2^{O(k(m+n))} |\cG^\omega_{(m \times n);k}| \\
            & \le 2^{O(m+n)} |\cG_{(m-k) \times (n-k)}| \\
            & \le 2^{O(m+n)} |\cG_{m \times n}|.
        \end{split}
    \end{equation*}
    Taking logarithms on both sides and sending $m,n \to \infty$, we obtain 
   \begin{align*}
        \lim_{m,n\to \infty} \frac{\log_2 |\cG^\omega_{{m \times n}}|}{mn} 
        &= \lim_{m,n \to \infty} \frac{\log_2|\cG_{m \times n}|}{mn} = H_0.
    \end{align*}
    On the other hand, 
    \begin{equation*}
        \lim_{m,n \to \infty} \frac{\log_2 |\cG^\omega_{{m \times n}}|}{\log_2 |\cG_{{m\times n}}|} 
            \ge \lim_{m,n \to \infty} \frac{\log_2 |\cG_{(m-k)\times (n-k)}|}
            {\log_2 |\cG_{m\times n}|} = 1. \qedhere
    \end{equation*}

%\end{multline*}
\end{proof}
This proposition implies that the entropy rate defined in~\eqref{eq:top1} approaches $H_0$ as $\Lambda \Uparrow \Z^2$, and~\eqref{eq:top1} does not depend on the boundary condition $\omega$. 

\begin{theorem}\label{thm:H_0 bound}
    The topological entropy at zero temperature satisfies $0.3012 \le H_0 \le 0.3409$.
\end{theorem}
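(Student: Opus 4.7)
The plan is to reduce to a counting problem for maximal independent sets on the finite box $\Z_{m \times n}$ (which by Lemma~\ref{lemma:MIS} is exactly the set of ground states $\cG_{m \times n}$) and to attack the upper and lower bounds by separate methods. Proposition~\ref{prop:uniqueness_limit} means we are free to work with convenient boundary conditions; we may also replace $\cG_{m\times n}$ by the count of MIS configurations on the $m\times n$ grid graph up to a factor $2^{O(m+n)}$ without affecting the limit.

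For the \textbf{upper bound}, I would use the transfer matrix method on horizontal strips of height $n$. A state is a pair consisting of (i) a valid ``column configuration'', namely a 0/1 assignment to $n$ vertically stacked sites satisfying the vertical independence constraint, and (ii) a marked subset of those sites recording which zeros are not yet adjacent to any $1$ in the previous column or vertically within the current one. A transition to the next column is admissible iff the new column is itself independent, horizontally independent from the current one, and places a $1$ next to every marked zero of the current column (thus certifying its maximality). Each MIS on $\Z_{m\times n}$ corresponds, up to boundary effects, to a length-$m$ walk in this automaton, so by Perron–Frobenius $|\cG_{m\times n}| \le C(n)\,\lambda_n^{m}$ where $\lambda_n$ is the spectral radius of the transition matrix; combined with Proposition~\ref{prop:uniqueness_limit} this yields
\begin{equation*}
    H_0 \;\le\; \frac{\log_2 \lambda_n}{n}.
\end{equation*}
Computing $\lambda_n$ numerically for $n$ in the range accessible by sparse linear algebra (or invoking the numerical bounds of Oh--Louidor--Marcus \cite{Oh2017maxindset} for the maximum hard-square shift, which coincides with our ground-state set) produces the claimed bound $0.3409$.

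For the \textbf{lower bound}, I would exhibit a subshift of $\cX_0$ whose topological entropy can be estimated from below. The natural candidate, already mentioned in the Introduction, is $S_{(1,2),(1,\infty)}\subset\cX_0$: every row of any $\omega\in S_{(1,2),(1,\infty)}$ is a $(1,2)$-constrained binary sequence (hence every zero has a $1$ neighbor in its row, so maximality is automatic) and columns are $(1,\infty)$-constrained (no two vertical $1$'s). Counting configurations of $S_{(1,2),(1,\infty)}$ on $\Z_{m\times n}$ reduces to a one-dimensional transfer-matrix count on each row with a compatibility constraint between consecutive rows; one can again set up a transfer matrix acting on valid row patterns of length $n$ and lower-bound its spectral radius by a Rayleigh/Collatz–Wielandt quotient using a positive test vector, giving a constructive lower bound $(\log_2 \tilde\lambda_n)/n \ge 0.3012$ for an appropriate $n$. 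Using the explicit entropy value from \cite{katozegger2000ISIT} is an alternative route.

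The main obstacle is computational rather than conceptual: the upper transfer matrix has size exponential in $n$, so pushing $n$ far enough to reach $0.3409$ requires either exploiting symmetry (rotations, reflections, mark-set compression) to reduce the state space, or relying on the published numerical work \cite{Oh2017maxindset}. A secondary difficulty is ensuring that the boundary effects absorbed into the $C(n)$ prefactor for the upper bound (and the analogous correction for the lower bound) are genuinely sub-exponential in $mn$, which follows directly from Proposition~\ref{prop:uniqueness_limit} together with the observation that $|\cG_{m\times n}|\le 2^{O(m+n)}|\cG_{(m-k)\times(n-k)}|$ used in its proof.
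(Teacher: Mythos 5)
Your overall architecture (sandwich $H_0$ between a tractable subsystem and a relaxed supersystem, evaluate both by transfer matrices) matches the paper's, but both halves diverge in ways that matter for hitting the stated constants. On the upper bound, the paper does not use the plain strip bound $\log_2\lambda_n/n$: it relaxes maximality only at the box boundary (the family $F(m,n)$ of independent sets in which every \emph{interior} $0$ has a $1$-neighbor) and then invokes the Forchhammer--Justesen \emph{two-seam cylinder} bound at circumference $2p=14$, which converges much faster than free-boundary strips and already gives $0.34085$. Your route is salvageable, but two points need care: (i) because maximality is a ``presence'' constraint rather than a forbidden-pattern constraint, restricting a valid configuration to a sub-box does not preserve validity, so the inequality $H_0\le \log_2\lambda_n/n$ for the double limit is only legitimate after passing to a boundary-relaxed, sub-multiplicative family exactly like $F(m,n)$ — your ``marked zeros'' automaton must leave the marks on \emph{both} extreme columns unconstrained for this to work; (ii) there is no evidence that the plain strip eigenvalue drops below $0.3409$ at widths reachable by sparse linear algebra, and \cite{Oh2017maxindset} is cited in the paper only for a lower bound ($\ge 0.29$), not an upper bound.

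The more serious gap is the lower bound. The paper's proof counts maximal independent sets of the $12\times 130$ box with zero boundary directly by a column-based transfer matrix (obtaining $|\cG^0_{12\times 130}|\ge 2.5\times 10^{154}$) and converts this into $H_0\ge \log_2|\cG^0_{m\times n}|/((m+1)(n+1))$ by tiling $\Z^2$ with $(m{+}1)\times(n{+}1)$ blocks whose seam row and column are filled deterministically by the recovery rule. You instead propose bounding below by the entropy of the proper subsystem $S_{(1,2),(1,\infty)}\subset\cX_0$. That subsystem excludes every MIS configuration in which some $0$ is certified only by a vertical neighbor, so its entropy can be (and plausibly is) strictly and substantially smaller than $H_0$; neither \cite{katozegger2000ISIT} nor your proposed Rayleigh-quotient computation is shown to yield a value $\ge 0.3012$, and the paper's own introduction uses $S_{(1,2),(1,\infty)}$ only to conclude $H_0>0$. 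In addition, turning a free-boundary strip eigenvalue into a lower bound on a two-dimensional capacity requires a stacking construction with compatible seams (precisely the step the paper handles with its deterministic seam-filling); for a maximality-type constraint this is not automatic. As it stands, your lower bound argument establishes positivity but not the constant $0.3012$.
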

By Lemma~\ref{lemma:MIS}, $\cG^0_{m \times n}$ corresponds to $\sM(\Z_{m \times n})$, where $0$ in the superscript refers to the all zero configuration $0^{\Z^2}$ (the zero boundary condition).
The bound is obtained by counting $|\cG^0_{m\times n}|$, which is based on a standard application of the transfer matrix method, given in Appendix~\ref{App:zero-temp-entropy}.

\subsection{Positive temperature} \label{sec: H positive}
At positive temperature (finite $\beta$), the entropy of the system should increase from $H_0$. 
In this section, we show that this is indeed the case by giving a lower bound on $ H(\mu_{U_n; \beta}^e)$ 
that depends on $\beta$, where $U_n$ is the box $\Z_{2n \times 2n} = \{-n,\ldots,n\}^2$.
From \eqref{eq:top1} we have 
\begin{align}
        H(\mu_{U_n; \beta}^e) &:=-\frac 1{|U_n|}\sum_{\omega\in\Omega_{U_n}^e}
        \mu_{U_n; \beta}^{e}(\omega) \log_2\mu_{U_n; \beta}^{e}(\omega) \nonumber\\
        &= - \frac{1}{|U_n|} \sum_{\omega \in \Omega_{U_n}^e} \mu_{U_n; \beta}^{e}(\omega) \log_2 \frac{e^{-\sH_{U_n; \beta}(\omega)}}{\bfZ_{U_n; \beta}^e} \nonumber\\
        & = \frac{\langle \sH_{U_n; \beta} \rangle_{\mu_{U_n; \beta}^e}}{|U_n|} \log_2 e + \frac{\log_2 \bfZ_{U_n; \beta}^e}{|U_n|},\label{eq:entropy_mu} 
        \end{align}
where $\sH_{U_n; \beta}$ is given in~\eqref{eq:hami} and $\langle \cdot \rangle_{\mu_{U_n; \beta}^e}$ is the expectation with respect to the measure $\mu_{U_n; \beta}^e$ defined in \eqref{eq:Gibbs}. Recalling our notation, $U_n^+$ refers to $U_n$ and its outer boundary. 
To compute the first term in \eqref{eq:entropy_mu}, find
\begin{align}\label{eq:Ehami}
        \langle \sH_{U_n; \beta} \rangle_{\mu_{U_n; \beta}^e} 
        & =  - \beta |U_n^+| + 2 \beta \big\langle \sum_{i \in U_n^+} \1_{i\in \cE} \big\rangle_{\mu_{U_n;  \beta}^e} \nonumber\\
        & = - \beta |U_n^+| + 2 \beta |U_n^+| \mu_{U_n; \beta}^e \left( \{\omega \in \Omega_{U_n}^e \mid i \in \cE(\omega) \right) \nonumber\\
        & \ge - \beta |U_n^+| + 2 \beta |U_n| \mu_{U_n; \beta}^e \left( \{\omega \in \Omega_{U_n}^e \mid i \in \cE(\omega) \cap U_n \} \right) 
\end{align}

\begin{lemma}\label{lemma:error-low}
    For all $\beta>0$ and $i \in U_n$, 
    \begin{equation}\label{eq:recovery_prob_low}
        \mu_{U_n; \beta}^e \left( \{\omega \in \Omega_{U_n}^e \mid i \in \cE(\omega) \} \right) \ge \frac{1}{1 + e^{10 \beta}}.
    \end{equation}
\end{lemma}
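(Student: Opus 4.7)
The plan is to compare configurations that violate the recovery rule at site $i$ with those that do not by using a simple site-flip involution, and then to control how much the Hamiltonian can change under this flip.

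First I would introduce the flip map $g_i \colon \Omega_{U_n}^e \to \Omega_{U_n}^e$ defined by $(g_i(\omega))_j = \omega_j$ for $j \ne i$ and $(g_i(\omega))_i = 1 \oplus \omega_i$. Since $i \in U_n$, the flip does not touch the boundary condition, so $g_i$ is a well-defined involution on $\Omega_{U_n}^e$. The next step is to check that $g_i$ swaps the events $\{i \in \cE(\omega)\}$ and $\{i \notin \cE(\omega)\}$. This reduces to the observation that the recovery condition at $\cross_i$, namely $\omega_i = 1 \Leftrightarrow \omega_{N(i)} \equiv 0$, is a biconditional in $\omega_i$ whose right-hand side does not depend on $\omega_i$; flipping $\omega_i$ therefore necessarily toggles its truth value, so $g_i$ restricted to $\{i \in \cE\}$ is a bijection onto $\{i \notin \cE\}$.

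The heart of the argument is the bound $|\sH_{U_n;\beta}(g_i(\omega)) - \sH_{U_n;\beta}(\omega)| \le 10\beta$. Only those $B \Subset \Z^2$ with $i \in B$ can have $\Phi_B$ affected by the flip, and by the definition \eqref{eq:potential} these are exactly the five crosses $\cross_j$ with $j \in \cross_i$. Since $i \in U_n$, every such $j$ lies in $U_n^+ = U_n \cup \partial U_n$, so all five of these terms appear in $\sH_{U_n;\beta}$; each contribution $\Phi_{\cross_j}$ changes by at most $2\beta$, giving the claimed bound.

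Combining these pieces, reindex via $\eta = g_i(\omega)$:
\begin{equation*}
\sum_{\omega : i \notin \cE(\omega)} e^{-\sH_{U_n;\beta}(\omega)} \;=\; \sum_{\eta : i \in \cE(\eta)} e^{-\sH_{U_n;\beta}(g_i(\eta))} \;\le\; e^{10\beta}\sum_{\eta : i \in \cE(\eta)} e^{-\sH_{U_n;\beta}(\eta)}.
\end{equation*}
Writing $\mu_{U_n;\beta}^e(\{i \in \cE\}) = A/(A+B)$ with $A$ and $B$ the sums over $\{i \in \cE\}$ and $\{i \notin \cE\}$ respectively, the displayed inequality $B \le e^{10\beta} A$ yields $\mu_{U_n;\beta}^e(\{i \in \cE\}) \ge 1/(1+e^{10\beta})$, which is \eqref{eq:recovery_prob_low}.

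The only place that requires care is the verification that the five crosses contributing to the Hamiltonian change are actually inside the sum defining $\sH_{U_n;\beta}$ even when $i$ sits on the inner boundary of $U_n$; this is handled by the observation that $U_n^+$ already contains $\cross_i$ for every $i \in U_n$. Beyond that, the argument is essentially a one-line coupling, so no real obstacle is anticipated.
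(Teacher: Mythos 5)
Your proof is correct and follows essentially the same route as the paper: the paper also uses the single-site flip $f_i$ as a bijection between $\{i \in \cE\}$ and $\{i \notin \cE\}$, bounds the change in the error count $\varepsilon_\omega(U_n^+)$ by $5$ (equivalently, the Hamiltonian by $10\beta$, since each of the five crosses containing $i$ contributes $2\beta$), and concludes with the same $A/(A+B)$ comparison. No gaps.
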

\begin{proof} Throughout the proof, we implicitly assume that $i \in U_n$.
Rewriting ~\eqref{eq:Gibbs}, 
\begin{equation*}
    \begin{split}
    \mu_{U_n; \beta}^e \left( \{\omega \in \Omega_{U_n}^e \mid i \in \cE(\omega) \} \right) 
     & = \frac{\sum_{\omega \in \Omega_{U_n}^e \!\!: i \in \cE(\omega)} e^{-2 \beta \varepsilon_\omega(U_n^+)}}{\sum_{\omega \in \Omega_{U_n}^e} e^{-2 \beta \varepsilon_\omega(U_n^+) }} \\
     & = \frac{\sum_{\omega \in \Omega_{U_n}^e \!\!: i \in \cE(\omega)} e^{-2 \beta \varepsilon_\omega(U_n^+)}}{\sum_{\omega \in \Omega_{U_n}^e \!\!: i \in \cE(\omega)} e^{-2 \beta \varepsilon_\omega(U_n^+)} + \sum_{\omega \in \Omega_{U_n}^e \!\!: i \notin \cE(\omega)} e^{-2 \beta \varepsilon_\omega(U_n^+)}}.
    \end{split}
\end{equation*}

Let $f_i:\Omega_{U_n}^e\to\Omega_{U_n}^e$ be the mapping that flips the bit stored at site $i$. 
This mapping gives a bijection between the configurations in which the $i$-th bit does not match the recovery rule and those in which it does. Moreover, for $\omega \in \Omega_{U_n}^e$ with $i \in \cE(\omega) \cap U_n$ we have $\varepsilon_{f_i(\omega)}(U_n^+) \ge \varepsilon_\omega(U_n^+) -5$.
In other words, the flip operation reduces the number of errors by at most $5$.
To see this, suppose that $\omega_j = 0$ for all $j$ with $\|i-j\|_1 \le 2$ and that there are $5$ errors around $i$. Since $f_i(\omega)$ flips the middle $0$ to $1$, this corrects $5$ errors.
Therefore,
\begin{equation*}
       \mu_{U_n; \beta}^e \left( \{\omega \in \Omega_{U_n}^e \mid i \in \cE(\omega)\} \right) \\
       \ge \frac{\sum_{\omega \in \Omega_{U_n}^e \!\!: i \in \cE(\omega)} e^{-2 \beta \varepsilon_\omega(U_n^+)}}{\sum_{\omega \in \Omega_{U_n}^e \!\!: i \in \cE(\omega)} e^{-2 \beta \varepsilon_\omega(U_n^+)} + e^{10\beta}\sum_{\omega \in \Omega_{U_n}^e \!\!: i \in \cE(\omega)} e^{-2 \beta \varepsilon_\omega(U_n^+)}}  \qedhere
\end{equation*}
\end{proof}
Combining \eqref{eq:Ehami} and~\eqref{eq:recovery_prob_low}, we obtain
\begin{equation}\label{eq:energy_density}
    \frac{\langle \sH_{U_n; \beta} \rangle_{\mu_{U_n; \beta}^e}}{|U_n|} \ge -\beta \frac{|U_n^+|}{|U_n|} + \frac{2\beta}{1+e^{10\beta}} \gtrsim -\beta + \frac{2\beta}{1 + e^{10 \beta}}\quad \text{ as } n \to \infty.
\end{equation}
From~\eqref{eq:partition} we have
\begin{equation} \label{eq:log_Z}
\begin{split}
    \log_2 \bfZ_{U_n; \beta}^e & = \beta |U_n^+| \log_2 e + \log_2 \sum_{k = 0}^{|U_n^+|} a_{U_n}(k) e^{-2k\beta},
\end{split}
\end{equation}
where $a_{U_n}(k)=|\{\omega\in\Omega_{U_n}^e : \varepsilon_\omega(U_n^+) =k \}|$ is the number of configurations on $U_n$ with exactly $k$ unmatched bits (under the even boundary condition). 
Note that $\log_2 \sum_{k=0}^{|U_n^+|} a_{U_n}(k) e^{-2k\beta} \ge \log_2 a_{U_n}(0)$ and
\begin{equation}\label{eq:pressure1}
    \begin{split}
    \lim_{n\to\infty} \frac{\log_2 \bfZ_{U_n; \beta}^e}{|U_n|} & \ge \beta \log_2 e \lim_{n \to \infty} \frac{|U_n^+|}{|U_n|} + \lim_{n\to\infty}\frac{\log_2 a_{U_n}(0)}{|U_n|} = \beta \log_2 e + H_0.
    \end{split}
\end{equation}

On the other hand, by~\eqref{eq:log_Z}
\[
\begin{split}
 \log_2 \sum_{k=0}^{|U_n^+|} a_{U_n}(k) e^{-2k\beta}&\ge \log_2 \left [ a_{U_n}(0) + (2^{|U_n|} - a_{U_n}(0)) e^{-2 \beta |U_n^+|}  \right ] \\
&  = \log_2 a_{U_n}(0) + \log_2 \left [ 1 + \left( \frac{2^{|U_n|}}{a_{U_n}(0)} -1 \right ) e^{-2\beta |U_n^+|} \right ] \\
&  \ge \log_2 a_{U_n}(0) + \log_2 \left [ \frac{2^{|U_n|}}{a_{U_n}(0)} e^{-2\beta |U_n^+|}\right]  \\
& = |U_n| - 2 \beta |U_n^+| \log_2 e,
\end{split}
\]
where the first inequality comes from the observation that
\begin{equation*}
    |\Omega_{U_n}^e| = 2^{|U_n|} = \Big | \bigcup_{k=0}^{|U_n^+|} \{ \omega \in \Omega_{U_n}^e \mid \varepsilon_\omega(U_n^+) = k \} \Big | = \sum_{k=0}^{|U_n^+|} a_{U_n}(k).
\end{equation*}
Therefore,
\begin{equation}\label{eq:pressure2}
    \begin{split}
        \lim_{n\to\infty} \frac{\log_2 \bfZ_{U_n; \beta}^e}{|U_n|} \ge \beta \log_2 e \lim_{n \to \infty} \frac{|U_n^+|}{|U_n|}  + \Big (1 - 2 \beta \log_2 e \lim_{n \to \infty} \frac{|U_n^+|}{|U_n|} \Big ) =   1 - \beta \log_2 e.
    \end{split}
\end{equation}
\begin{theorem}[Entropy lower bound]\label{thm:entropy-low-bound}
For large $n$, the entropy of $\mu^e_{U_n,\beta}$ satisfies
    \begin{equation} \label{eq:entropy_bound}
        H(\mu^e_{U_n, \beta}) \gtrsim  \max \Big\{ \frac{2 \beta \log_2 e}{1+e^{10 \beta}} + H_0, 1 + \frac{2 \beta 
        \log_2 e}{1+e^{10 \beta}} - 2\beta \log_2 e \Big\}.
    \end{equation}
\end{theorem}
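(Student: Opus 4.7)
The plan is to recognize that the bound in \eqref{eq:entropy_bound} is just the assembly of the four displayed estimates that immediately precede the theorem. Starting from the identity \eqref{eq:entropy_mu}, I would separately lower-bound the energy term $\langle \sH_{U_n;\beta}\rangle_{\mu^e_{U_n;\beta}}/|U_n|$ using \eqref{eq:energy_density}, and the pressure term $\log_2 \bfZ^e_{U_n;\beta}/|U_n|$ in two different ways, giving one lower bound on $H(\mu^e_{U_n;\beta})$ for each choice of pressure estimate. Taking the larger of the two yields the max in \eqref{eq:entropy_bound}.

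For the first branch, I would combine \eqref{eq:energy_density} with the ground-state pressure bound \eqref{eq:pressure1}, obtaining asymptotically as $n\to\infty$ (i.e., using $|U_n^+|/|U_n|\to 1$)
\[
H(\mu^e_{U_n;\beta}) \gtrsim \Bigl(-\beta + \frac{2\beta}{1+e^{10\beta}}\Bigr)\log_2 e + \bigl(\beta\log_2 e + H_0\bigr) = \frac{2\beta\log_2 e}{1+e^{10\beta}} + H_0.
\]
For the second branch, I would instead combine \eqref{eq:energy_density} with the ``entropy-of-all-configurations'' pressure bound \eqref{eq:pressure2}, yielding
\[
H(\mu^e_{U_n;\beta}) \gtrsim \Bigl(-\beta + \frac{2\beta}{1+e^{10\beta}}\Bigr)\log_2 e + \bigl(1 - \beta\log_2 e\bigr) = 1 + \frac{2\beta\log_2 e}{1+e^{10\beta}} - 2\beta\log_2 e.
\]
Taking the maximum of the two right-hand sides gives \eqref{eq:entropy_bound}.

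There is essentially no obstacle here, since Lemma~\ref{lemma:error-low} (which powers \eqref{eq:energy_density}) and the two elementary bookkeeping bounds \eqref{eq:pressure1}--\eqref{eq:pressure2} are already proven. The only point to verify cleanly is that the boundary-to-bulk ratios $|U_n^+|/|U_n|=(2n+3)^2/(2n+1)^2$ appearing in the additive constants all tend to $1$, so the asymptotic inequality symbol $\gtrsim$ absorbs them in the limit; this is precisely why the statement is phrased for large $n$ rather than for every $n$. The two branches of the max reflect the tradeoff between a structural bound (which dominates at small $\beta$, where $H_0$ is the leading contribution) and a trivial bound (which dominates at large $\beta$, where the energy penalty drives $H_0$-type counting to be dwarfed by raw configuration count); I would briefly mention this interpretation after stating the combined estimate.
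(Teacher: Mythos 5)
Your proposal is correct and is exactly the paper's argument: the paper's proof consists of combining \eqref{eq:entropy_mu}, \eqref{eq:energy_density}, \eqref{eq:pressure1}, and \eqref{eq:pressure2}, which is precisely the two-branch assembly you carry out. (One harmless slip: $U_n^+=U_n\cup\partial U_n$ has $|U_n^+|=(2n+1)^2+4(2n+1)$ rather than $(2n+3)^2$, since the outer boundary excludes the corner sites, but the ratio $|U_n^+|/|U_n|\to 1$ either way.)
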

\begin{proof}
    This is straightforward by combining~\eqref{eq:entropy_mu}, \eqref{eq:energy_density},  \eqref{eq:pressure1}, and \eqref{eq:pressure2}.
\end{proof}
The second term in \eqref{eq:entropy_bound} is larger for small $\beta$, and the first term takes over as $\beta$ increases. Thus, clearly, 
  $$
  \limsup_{n\to\infty} H(\mu^e_{U_n,\beta})>H_0 \text{ for all }\beta<\infty.
  $$

\begin{figure}
    \centering
    \includegraphics[width=0.45\linewidth]{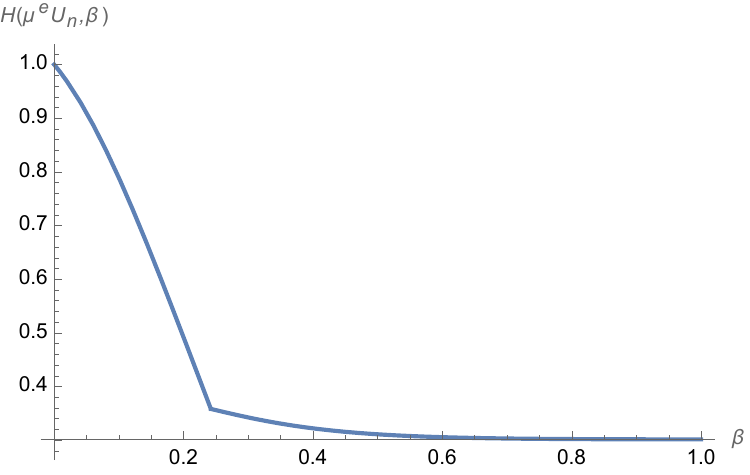}
     \caption{Plot of the bound \eqref{eq:entropy_bound}. 
     The horizontal axis corresponds to the entropy value $H_0 \approx 0.3012$ (the topological entropy), attained at zero temperature. 
     The singularity occurs at the $\beta$ for which the maximum switches from one term in~\eqref{eq:entropy_bound} to the other. }
     \label{fig:entropy}
\end{figure}

We point out the trade-off between data recovery probability (accuracy) and entropy (capacity). At zero temperature ($\beta = \infty$), the system is supported on $\cX_0$, the recovery is exact, and the entropy is $H_0$.
For large but finite $\beta$,  Proposition~\ref{prop:local_pertarbation} ensures that the recovery probability remains high, while Theorem~\ref{thm:entropy-low-bound} shows that the capacity is increased due to the inclusion of locally perturbed configurations.
At the opposite extreme, as $\beta \to 0$, the system becomes highly disordered. 
By Lemma~\ref{lemma:error-low}, the recovery probability approaches $\frac{1}{2}$, and by Theorem~\ref{thm:fast-mixing}, the system cannot reliably retain a single bit of information.

\section{A hard-core model for recoverable systems} \label{sec:external_field}

\subsection{Maximal independent sets}

\subsubsection{The hard-core model}

The well-known hard-core model was originally introduced in statistical physics as a simplified model of a gas. 
A configuration $\omega \in \Omega$ represents the occupancy of particles on the lattice $\Z^2$, where $\omega_i = 1$ indicates that site $i$ is occupied. The model imposes a hard-core constraint, forbidding adjacent occupied sites in admissible configurations. If $\omega$ is such a configuration, then $\omega_i \omega_j = 0$ for all $i \sim j$. 
Equivalently, the set of occupied sites in any valid configuration forms an independent set in $\Z^2$.
The collection of valid configurations has probability $1$ under the hard-core Gibbs measure.
Alternatively, we can define this measure on the set of independent sets $\sI(\Z^2)$.

\begin{definition}[Hard-core Gibbs measure on independent sets]
    We say that a probability measure $\mu$ is a \emph{hard-core (Gibbs) probability measure} with activity $\lambda > 0$ if for every $\Lambda \Subset \Z^2$, $J \in \sI(\Z^2)$ and $\mu$-almost every $I \in \sI(\Z^2)$, 
    \begin{equation}\label{eq:def_hard-core_ind}
        \mu(J \mid I \cap \Lambda^c) = \begin{cases}
            \frac{\lambda^{|J \cap \Lambda|}}{\bfZ_{\Lambda, \lambda}^I} & \text{if } I \cap \Lambda^c = J \cap \Lambda^c \\
            0 & \text{otherwise},
        \end{cases}
    \end{equation}
    where
    \begin{equation*}
        \bfZ_{\Lambda, \lambda}^I = \sum_{K \in \sI(\Z^2): K \cap \Lambda^c = I \cap \Lambda^c} \lambda^{|K \cap \Lambda|}
    \end{equation*}
    is the partition function.
\end{definition}

The parameter $\lambda > 0$, called the \emph{activity}, controls the particle density: larger $\lambda$ favors configurations with more occupied sites, subject to the hard-core constraint. For small values of $\lambda$, particles are sparse and disordered, and the Gibbs measure is known to be unique. As $\lambda$ increases, long-range correlations emerge and the system exhibits a phase transition.

It has long been conjectured~\cite{baxter1980hard} that the hard-core model on $\mathbb{Z}^2$ exhibits a critical activity $\lambda_c \approx 3.796$, such that for $\lambda < \lambda_c$ the Gibbs measure is unique, and for $\lambda > \lambda_c$, multiple extremal Gibbs measures coexist. The best known lower bound on $\lambda_c$ is due to Sinclair et al.~\cite{sinclair2017spatial}, who proved uniqueness for $\lambda < 2.538$. 
Earlier results on the lower bound for $\lambda_c$ include~\cite{radulescu1987dobrushin}, \cite{van1994percolation}, \cite{weitz2006counting}, \cite{restrepo2013improved}. 
On the other hand, Blanca et al.~\cite{blanca2019phase} showed that the hard-core model admits a phase transition for $\lambda > 5.3506$, meaning that this is an upper bound on $\lambda_c$ (if the critical activity value exists).

\subsubsection{The maximal hard-core model} Inspired by~\eqref{eq:def_hard-core_ind}, we now define a natural analogue of the hard-core Gibbs measure supported on the space of maximal independent sets $\cX_0$.

\begin{definition}[The maximal hard-core model]
    A probability measure $\mu$ on  $\cX_0$ is called a \emph{maximal hard-core Gibbs measure} with activity $\lambda > 0$ if, for every finite set $\Lambda \Subset \Z^2$, $\eta \in \cX_0$ and $\mu$-almost every $\omega \in \cX_0$,
    \begin{equation}\label{eq:def_hard-core_mis}
        \mu_\Lambda(\eta \mid \omega) = \begin{cases}
            \frac{\lambda^{|\eta_{\Lambda}|}}{\bfZ_{\Lambda, \lambda}^\omega} & \text{if } \eta_{\Lambda^c} = \omega_{\Lambda^c}, \\
            0 & \text{otherwise},
        \end{cases}
    \end{equation}
    where $\Lambda^c = \Z^2 \setminus \Lambda$, $|\eta_\Lambda|:=|\supp(\eta_\Lambda)|$, and the partition function is given by
    \begin{equation*}
        \bfZ_{\Lambda, \lambda}^\omega = \sum_{\eta \in \cX_0: \eta_{\Lambda^c} = \omega_{\Lambda^c}} \lambda^{|\eta_\Lambda|}.
    \end{equation*}
\end{definition}

For completeness, we check in Appendix~\ref{App: existence of measure} that $\mu$ is well-defined, i.e., that maximal hard-core Gibbs measures always exist. 

\subsection{Phase coexistence at high activity}
In this section we establish the following theorem.

\vspace*{.05in}
\begin{theorem}\label{thm:main}
    The maximal hard-core model exhibits a phase transition for all $\lambda > 6.3506$.
\end{theorem}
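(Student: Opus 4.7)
The plan is to establish phase coexistence by constructing two distinct extremal Gibbs measures, $\mu^{\rm even}$ and $\mu^{\rm odd}$, obtained as thermodynamic limits of the finite-volume maximal hard-core measures with boundary conditions $\omega^{e}$ and $\omega^{o}$ (the two checkerboard MIS configurations introduced in Section~\ref{sec: error}). At large $\lambda$ the measure favors dense MIS patterns, and I expect each boundary condition to pin the system into the corresponding phase. This is the strategy of Dobrushin~\cite{dobrushin1968problem} and of Blanca, Chen, Galvin, Randall, and Tetali~\cite{blanca2019phase} for the ordinary hard-core model; I would adapt their Peierls-type contour argument to the maximal setting, with the extra unit in the threshold ($5.3506 \to 6.3506$) arising from the cost of preserving maximality when flipping a contour.

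\textbf{Step 1 --- Phase labels and contours.} Fix a box $\Lambda_n$ with the boundary condition $\omega^{e}$ outside, and let $\mu^e_n$ be the corresponding measure, supported on $\cX_0 \cap \Omega^{\omega^e}_{\Lambda_n}$. Given a sample $\omega \in \cX_0$, I would assign to each site (or each dual face) a phase label $e$ or $o$ according to whether $\omega$ agrees locally with $\omega^{e}$ or $\omega^{o}$, using the precise ``phase window'' definition of~\cite{blanca2019phase} adapted so that configurations close to a checkerboard are unambiguously labeled. The interface between the even and odd regions decomposes into closed curves in the dual lattice, which I call \emph{contours}. The event that the origin is \emph{not} in the even phase forces the existence of a contour $\gamma$ enclosing $0$.

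\textbf{Step 2 --- Contour-flipping map.} The central estimate is a bound of the form $\mu^e_n(\gamma \subset \omega) \le \lambda^{-c\,|\gamma|}$ for some constant $c>0$. To obtain it, I would construct a map $T_\gamma$ that takes a configuration $\omega$ containing $\gamma$ and replaces the interior of $\gamma$ with the even checkerboard. In the ordinary hard-core model this immediately produces a valid configuration of higher weight (by an isoperimetric factor $\lambda^{c\,|\gamma|}$) and is close to injective. In the maximal model the naive flip typically violates maximality along a thin annulus adjacent to $\gamma$, so I would compose it with a canonical local repair that installs the minimum number of extra $1$'s needed to restore the MIS property. The repair is supported in a tube of bounded width around $\gamma$, so it adds at most $O(|\gamma|)$ particles and yields a map whose fibers have size at most $K^{|\gamma|}$ for an absolute constant $K$. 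The weight ratio then takes the form
\begin{equation*}
\frac{\lambda^{|\omega|}}{\lambda^{|T_\gamma(\omega)|}} \;\le\; \lambda^{-c\,|\gamma|} \cdot (\text{correction from maximality patch}),
\end{equation*}
where the correction term is responsible for the additive $+1$ in the threshold.

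\textbf{Step 3 --- Peierls sum and conclusion.} By the standard Peierls count, the number of contours of length $\ell$ surrounding the origin is at most $\ell \cdot c_0^{\ell}$ for an explicit $c_0$ (from self-avoiding polygon bounds). Combining with Step~2,
\begin{equation*}
\mu^e_n\bigl(\text{origin is in the odd phase}\bigr) \;\le\; \sum_{\ell \ge \ell_0} \ell \, c_0^{\ell}\, K^{\ell}\, \lambda^{-c\,\ell},
\end{equation*}
which is strictly less than $1/2$ once $\lambda > 6.3506$, with the numerical value emerging by balancing the constants exactly as in~\cite[Thm.~1]{blanca2019phase} but with the extra $K$-factor from the maximality patch. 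The same argument with odd boundary condition gives $\mu^o_n(\text{origin in the even phase}) < 1/2$. Hence any weak subsequential limits $\mu^{\rm even}$ and $\mu^{\rm odd}$ are distinct, and standard arguments (see~\cite[Ch.~6]{friedliLattice2018}) promote them to distinct extremal Gibbs measures, proving phase coexistence.

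\textbf{Main obstacle.} The hardest step is Step~2: designing the contour-flipping map $T_\gamma$ so that (i) its image lies in $\cX_0$, (ii) its fibers have cardinality at most exponential in $|\gamma|$ with a controlled base, and (iii) the weight ratio degrades by at most $\lambda^{-c|\gamma|}$ with a sufficiently large $c$. In the ordinary hard-core setting all three properties are immediate from a one-line flip; in the maximal setting the MIS constraint couples the interior of $\gamma$ to a neighborhood of width $\ge 2$ around it, and the repair procedure must be canonical enough to keep the fibers small while still restoring maximality in every possible local pattern along $\gamma$. Getting the constant in (iii) good enough to hit $6.3506$ (rather than a larger number) is where I expect the bulk of the technical work to lie.
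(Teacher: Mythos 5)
Your high-level strategy (even/odd boundary conditions, a Peierls-type contour bound, distinct subsequential limits) matches the paper's, but the crucial Step~2 as you describe it would not work, and it is precisely the step the paper spends its effort on. You propose to ``replace the interior of $\gamma$ with the even checkerboard'' and then repair maximality in a thin annulus. For a model whose weight is the \emph{volume} quantity $\lambda^{|\omega|}$, erasing the interior and overwriting it with a fixed pattern is fatal on two counts: the fiber of such a map over a given image contains every configuration with that contour and \emph{any} admissible interior, so the fibers grow exponentially in the enclosed \emph{area}, not in $|\gamma|$; and the change in particle number is of the order of the area, with no guaranteed sign, so the weight ratio is not $\lambda^{-c|\gamma|}$. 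This is why Dobrushin and Blanca et al.\ (and the paper) do not flip the interior but \emph{shift} it by one lattice unit: the shift is a bijection on the interior support (so the count there is preserved exactly and the map is at most $4$-to-$1$ per contour), it converts odd-parity occupied sites to even parity, and the one-unit strip uncovered by the shift is filled with exactly $|\gamma|/4$ new even vertices, giving the gain $\lambda^{|\gamma|/4}$. The genuinely new work in the maximal setting, which your proposal does not address, is proving that the shifted-and-augmented configuration is still a \emph{maximal} independent set (Lemma~4.6(a), a case analysis showing no all-zero cross survives the shift); notably this costs nothing in the weight ratio, since the added vertices only increase $|\omega|$.

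Two further corrections. First, your attribution of the threshold gap ($5.3506 \to 6.3506$) to ``the cost of preserving maximality'' is wrong: the paper's bound $6.3506$ is just $\alpha_\star^4$ with $\alpha_\star<1.58746$ from the contour-counting lemma, i.e.\ the \emph{unrefined} hard-core bound; Blanca et al.\ reach $5.3506$ only via an extra refinement that the paper does not reproduce, and maximality contributes nothing to the exponent. Second, your distinguishing event (``origin in the odd phase'') requires a per-site phase labeling that is ambiguous for MIS configurations resembling neither checkerboard; the paper instead conditions on the $m$-homogeneous event and compares $m$-odd versus $m$-even, which is exactly how it avoids that ambiguity (see its Remark on the differences from Blanca et al.). With the shift map in place of your flip-and-repair, and the conditional event in place of the per-site label, the Peierls sum in your Step~3 goes through as you wrote it.
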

\vspace*{.05in}
The general idea of the proof of this theorem follows by adapting to our case a Peierls-style argument developed by Blanca et al.~\cite{blanca2019phase} for the standard hard-core model. There are several important differences between our arguments and the proof in \cite{blanca2019phase}, which are summarized at the end of this section. 

We begin with an overview of the proof.  Recall our notation of the odd and even configurations, $\omega^o$ and $\omega^e$, from Section~\ref{sec: error}. 
 Recall also that $U_n = \{-n, \dots, n\}^2$ is a box.
Denote by $\cJ_n^e$ the set of all configurations contained in $\cX_0$ that agree with $\omega^e$ outside $U_n$, i.e.,
\[
    \cJ_n^e = \left\{ \eta \in \cX_0 \mid \eta_{U_n^c} = \omega^e_{U_n^c} \right\}.
\]
The corresponding Gibbs distribution on $U_n$ with even boundary condition $\sE$ is given by
\begin{equation*}
    \mu_n^e (\eta) := \mu_{U_n}(\eta \mid \omega^e) = \frac{\lambda^{|\eta_{U_n}|}}{\bfZ_n^e},
\end{equation*}
where $\eta\in \cJ_n^e$ and $\bfZ_n^e = \sum_{\eta \in \cJ_n^e} \lambda^{|\eta_{U_n}|}$.
The measure $ \mu_n^e $ is supported on $\cJ_n^e$. 
The Gibbs distribution $\mu_n^o$ under the odd boundary condition $\omega^o$ is defined analogously.

Let $\mu^e$, $\mu^o$ be the Gibbs measures obtained as weak subsequential limits of $\{\mu_n^e\}_n$ and $\{\mu_n^o\}_n$ respectively. We will show that the measures $\mu^e$ and $\mu^o$ are different. 

Throughout our argument, we will fix integers $m,n$ with $n > m$, and assume $m$ to be sufficiently large. 
Let $[m^-]:=\{-m-1, \dots, m\},[m]:=\{-m, \dots, m\},$ and $[m^+]:=\{-m, \dots, m+1\}$ and define the sets
    \begin{gather*}
    U_m^W=[m^-]\times [m],\quad U_m^E=[m^+]\times[m],\\
    U_m^S=[m]\times[m^-], \quad U_m^N=[m]\times[m^+]
    \end{gather*}
and refer to them collectively as $U_m^\ast$. Given a configuration $\eta \in \cX_0$, we say that 
\begin{itemize}
    \item $\eta$ is {$m$-odd} if the subset $\{i \in \Z^2 \mid \eta_i = \omega^o_i\}$ contains  at least one of the sets $U_m^\ast$,
    \item $\eta$ is {$m$-even} if the subset $\{i \in \Z^2 \mid \eta_i = \omega^e_i\}$ contains at least one of the sets $U_m^\ast$,
    \item $\eta$ is $m$-homogeneous if it is either $m$-even or $m$-odd.
\end{itemize}

To prove Theorem~\ref{thm:main}, it suffices to identify an event for which the conditional probabilities differ under the two limiting measures. Specifically, let $\sO_m$ denote the event that a configuration is $m$-odd, and $\sE_m$ the event that it is $m$-even. We will show that for all $n > m$ and sufficiently large $m$, 
\begin{equation}\label{eq:even_boundary}
    \mu_n^e (\sO_m \mid \sO_m \cup \sE_m)  \le \frac{1}{3},
\end{equation}
and 
\begin{equation} \label{eq:odd_boundary}
    \mu_n^o (\sO_m \mid \sO_m \cup \sE_m) \ge \frac{2}{3}.
\end{equation}
(note that the choice of $1/3$ in~\eqref{eq:even_boundary} is arbitrary in the sense that the argument applies for any number in $(0,0.5).$)
Once these inequalities are established, we will have
\begin{equation} \label{eq:distinguish}
    \mu^e(\sO_m \mid \sO_m \cup \sE_m) < \mu^o(\sO_m \mid \sO_m \cup \sE_m),
\end{equation}
which implies that the limiting Gibbs measures depend on the boundary condition, confirming the presence of a phase transition. In other words, under the condition that a configuration is $m$-homogeneous, the even boundary condition $\omega^e$ implies that it is unlikely to be $m$-odd, and vice versa.

Let $\cB_{m,n}^e:=\{\eta\in \cX_0 \text{ is $m$-odd}\mid \eta_{U_n^c}=\omega_{U_n^c}^e\}$ 
Let $\cB_{m,n}^e$ ($\cA_{m,n}^e$) be the set of $m$-odd (resp., $m$-homogeneous) configurations that are consistent with the even boundary condition $\omega^e$ outside $U_n$.
To prove~\eqref{eq:even_boundary}, we will construct a mapping $\phi : \cB_{m,n}^e \mapsto \cA_{m,n}^e \setminus \cB_{m,n}^e$ such that for a fixed large $m$ and $\tau \in \cA_{m,n}^e \setminus \cB_{m,n}^e$,
\begin{equation}\label{eq:shift_bound}
    \sum_{\substack{\eta \in \cB_{m,n}^e \\ \phi(\eta) = \tau}} \lambda^{|\eta_{U_n}| - |\tau_{U_n}|} \le \frac{1}{3}
\end{equation}
olds for all $\eta \in \cB_{m,n}^e$. If \eqref{eq:shift_bound} holds true, then for each $m$-odd configuration $\eta \in \cB_{m,n}^e$, there exists a corresponding $m$-even configuration $\phi(\eta)$ with significantly larger $\mu_n^e$ probability. Using 
\eqref{eq:shift_bound}, we find
\begin{equation*}
    \begin{split}
        \mu_n^e (\sO_m \mid \sO_m \cup \sE_m) &= \frac{\sum_{\eta \in \cB_{m,n}^e} \lambda^{|\eta_{U_n}|}}{\sum_{\eta \in \cA_{m,n}^e} \lambda^{|\eta_{U_n}|}} = \frac{\sum_{\eta \in \cB_{m,n}^e} \lambda^{|\phi(\eta)_{U_n}|} \cdot \lambda^{|\eta_{U_n}| - |\phi(\eta)_{U_n}|}}{\sum_{\eta \in \cA_{m,n}^e} \lambda^{|\eta_{U_n}|}} \\
        & = \frac{\sum_{\tau \in \cA_{m,n}^e \setminus \cB_{m,n}^e} \sum_{\substack{\eta \in \cB_{m,n}^e \\ \phi(\eta) = \tau}}\lambda^{|\tau_{U_n}|} \cdot \lambda^{|\eta_{U_n}| - |\tau_{U_n}|}}{\sum_{\eta \in \cA_{m,n}^e} \lambda^{|\eta_{U_n}|}} \\
        & = \frac{\sum_{\tau \in \cA_{m,n}^e \setminus \cB_{m,n}^e} \lambda^{|\tau_{U_n}|}  \sum_{\substack{\eta \in \cB_{m,n}^e \\ \phi(\eta) = \tau}} \lambda^{|\eta_{U_n}| - |\tau_{U_n}|} }{\sum_{\eta \in \cA_{m,n}^e} \lambda^{|\eta_{U_n}|}} \le \frac{1}{3}.
    \end{split}
\end{equation*}
Therefore, the mapping $\phi$ involved in~\eqref{eq:shift_bound} plays a key role in the proof. This mapping
will simply be a shift of $\eta$ by one unit in one of the 4 directions, see  Definition~\ref{def:mapping}.
The construction of $\phi$ relies on a geometric description of the interface between regions of different parity, captured through a contour representation, which we will now introduce. 

\subsubsection{The contour}
A sequence of vertices $x_1, x_2, \dots, x_k$ in a graph is called a path if $x_i \sim x_{i+1}$ for all $1 \le i < k$, and the path is called a (simple) cycle if $x_1 = x_k$ and $x_i \ne x_j$ for all $1 \le i < j < k$.
Only such cycles are considered below. 

From now till the end of this subsection, we fix an $m$-odd configuration $\eta \in \cB_{m,n}^e$. 
Consider the subgraph induced by the vertices $\{ i \in \Z^2 \mid \eta_i = \omega^o_i\}$. By the $m$-oddness of $\eta$, there is a connected component of this subgraph, $R$, such that $U_m\subset R$.

To define contours, we construct a graph $\Z^2_{\Diamond}$ as follows: its vertices are the edges of $\Z^2$, with two vertices connected if and only if the corresponding edges are adjacent (share a vertex in $\Z^2$) and are perpendicular.

With any configuration, $\eta$, we can associate a contour in $\Z^2_{\Diamond}$ as described next. Let $\gamma(\eta)$ be the set of edges $(u,v)$ in $\Z^2$ with $u \in R$ and $v \in R^c$, such that there is a path connecting $v$ to infinity that avoids $R$. 
(The last condition ensures that we exclude boundaries caused by ``holes'' in $R$).
Define the contour associated with $\eta$ as the subgraph of $\Z^2_{\Diamond}$ induced by the midpoints of the edges in $\gamma(\eta)$ and denote it by $\Gamma(\eta)$. To ease the notation, we also write $\Gamma, \gamma$ to refer to $\Gamma(\eta), \gamma(\eta)$ if there is no confusion. The construction of the contour is illustrated in Figure~\ref{fig:contour}. 

\begin{figure}[ht]
  \centering
  \begin{subfigure}[t]{0.45\textwidth}
    \includegraphics[width=\linewidth]{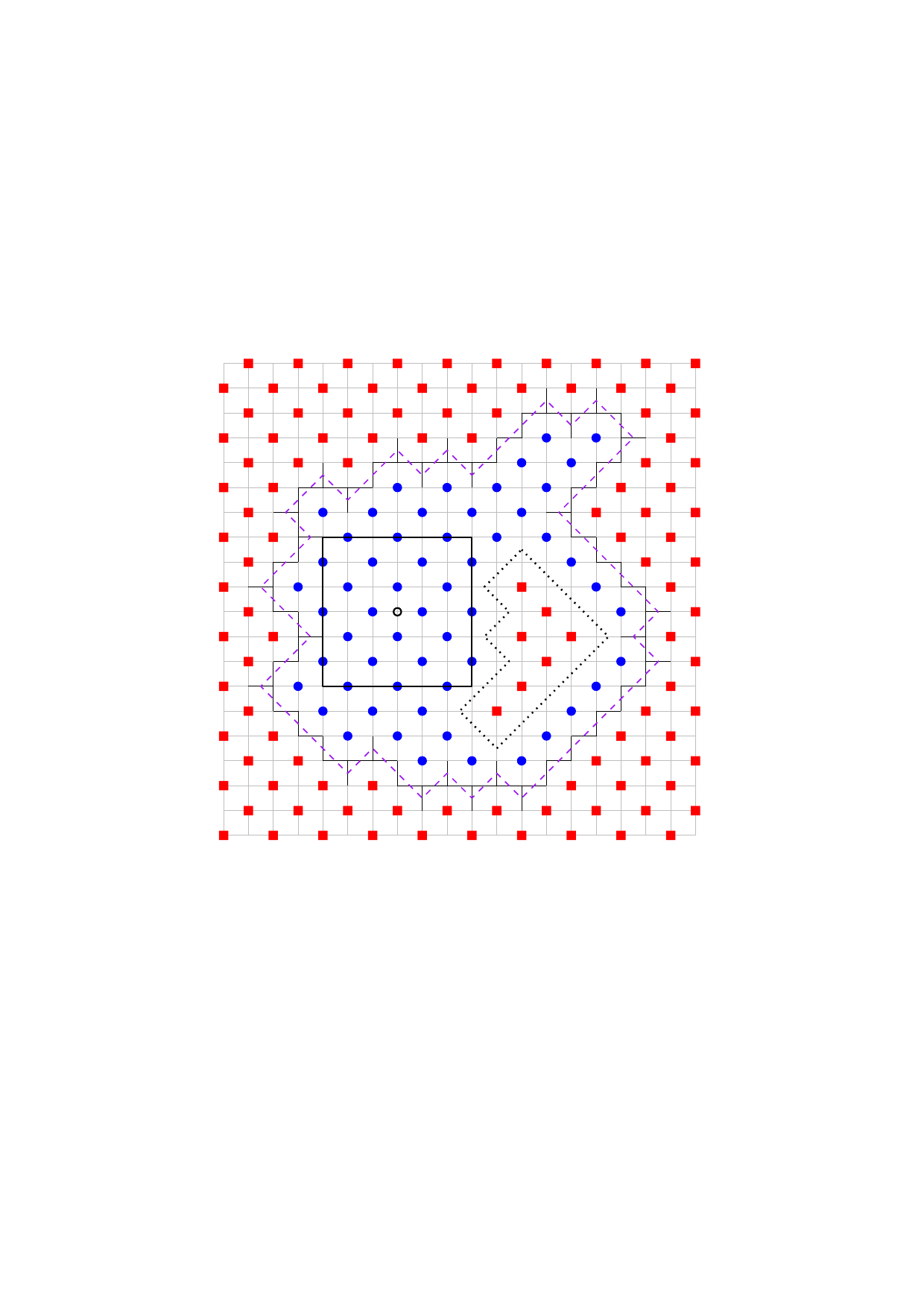}
    \subcaption{Construction of a contour defined by a $3$-odd configuration $\eta$. The odd and even occupied vertices of $\eta$ are shown as \textcolor{blue}{$\bullet$} and \textcolor{red}{$\blacksquare$}, respectively.
   The box $U_3$ is shown as a $6 \times 6$ square centered at the origin (shown as $\circ$).
     The connected component $R$---the subgraph of $\Z^2$ formed by the odd vertices and their neighbors---is surrounded by the dashed cycle in $\Z^2_\Diamond$.
 This cycle forms the contour $\Gamma(\eta)$, which 
 does not include the boundary induced by the hole in $R$, shown as a dotted cycle.}\label{fig:contour}
  \end{subfigure}\hfill
  \begin{subfigure}[t]{0.45\textwidth}
    \includegraphics[width=\linewidth]{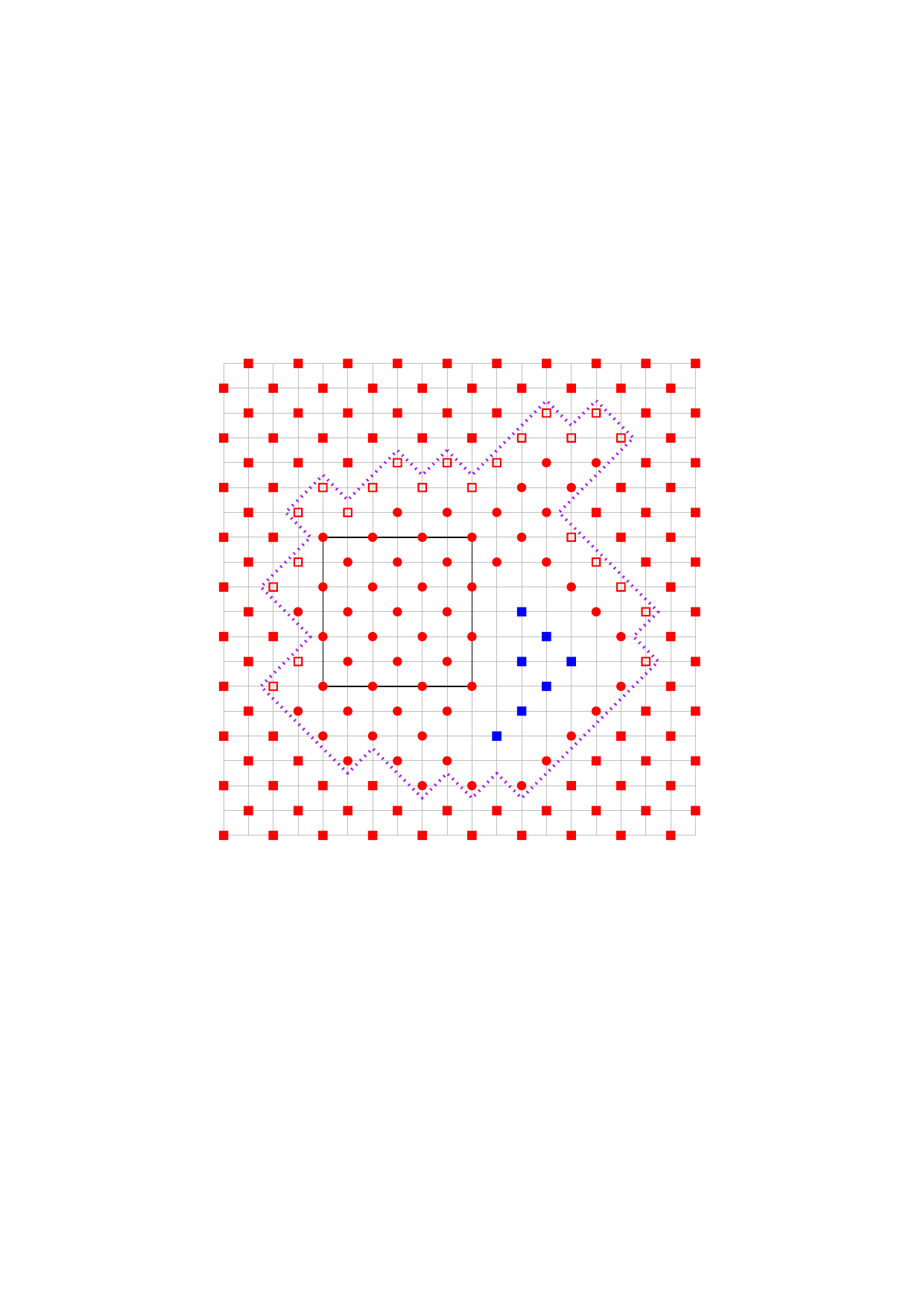}
    \subcaption{{\bf Contour elimination.} This figure shows the result of a shift
 by $s=(0,-1)$ for the configuration $\eta$ in Fig.~\ref{fig:contour}.
 The odd and even occupied vertices are shown in blue and red, respectively.
                     The shifted vertices in $(\supp(\eta) \cap \Gamma_\bullet) + s$ are shown as \textcolor{red}{$\bullet$} and $\textcolor{blue}{\blacksquare}$, and the newly added vertices in the set $I'_s$ are shown as \textcolor{red}{$\square$}. 
         The resulting configuration $\theta_s(\eta)$ is $3$-even (even on $U_3^S$), with the contour surrounding $U_3$ eliminated.}\label{fig:contour-shift}
  \end{subfigure}
  \caption{Illustration of the contour elimination procedure.}\label{fig:contour-combined}
\end{figure}

\begin{proposition}[\cite{blanca2019phase}, Lemmas~3.1--3.3] \label{prop:contour} Let $\eta\in \cB_{m,n}^e$.
    The contour $\Gamma = \Gamma(\eta)$ is a closed cycle in $\Z^2_{\Diamond}$ that separates $R$ from $U_n^c$.
    The length of the contour, $|\Gamma|$, is at least $2 \sqrt{2} m$ and is a multiple of $4$. 
\end{proposition}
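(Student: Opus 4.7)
The plan is to adapt the contour analysis of~\cite{blanca2019phase} to the maximal hard-core setting, establishing in turn: (i) that $\Gamma(\eta)$ is a simple closed cycle; (ii) that it separates $R$ from $U_n^c$; and (iii) the length lower bound together with divisibility by $4$.

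For (i), I would show that every vertex $v \in \Gamma$ has degree exactly $2$ in $\Gamma$. Such a $v$ is the midpoint of some edge $e=\{u,w\}\in\gamma$ with $u \in R$, $w \in R^c$, and $w$ reachable from infinity without meeting $R$. The candidate $\Gamma$-neighbors of $v$ at each endpoint of $e$ are the midpoints of the two edges of $\Z^2$ perpendicular to $e$ at that endpoint. A local case analysis on the values of $\eta$ at $u$, $w$, and their four remaining neighbors shows that exactly one perpendicular edge at each of $u,w$ lies in $\gamma$; the maximality condition inherited from Lemma~\ref{lemma:MIS} restricts which local $\{0,1\}$-patterns are admissible but does not invalidate the conclusion. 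Hence $\Gamma$ is a disjoint union of simple cycles in $\Z^2_{\Diamond}$, and the access-to-infinity clause in the definition of $\gamma$ singles out the unique component enclosing $R$. Claim (ii) then follows by applying the Jordan curve theorem to the polygonal closed curve $\Gamma \subset \R^2$: the set $R$ is connected and bounded (since $R \subseteq U_n$) with no boundary edge reaching the unbounded component, so it sits in the interior; every site $i \in U_n^c$ satisfies $\eta_i = \omega^e_i$, so $i \in R^c$ and has a path to infinity in $R^c$, placing $U_n^c$ in the exterior.

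For (iii), the length bound follows from the diameter of the enclosed region: since $R \supseteq U_m$ contains two opposite corners at Euclidean distance $2\sqrt 2\,m$, any closed curve enclosing $R$ has Euclidean length at least this diameter, which, after the edge-length normalization of $\Z^2_{\Diamond}$, yields $|\Gamma| \ge 2\sqrt 2\,m$. For divisibility by $4$, classify each edge of $\Z^2_{\Diamond}$ by its diagonal orientation in $\{\mathrm{NE,NW,SE,SW}\}$; closure of $\Gamma$ forces the number of NE edges to equal the number of SW edges, and likewise NW equals SE, so $|\Gamma|$ is already even. The stronger statement $4 \mid |\Gamma|$ follows from an additional parity argument, e.g., by counting signed crossings of $\Gamma$ with each half-integer line $\{x = k+\tfrac12\}$ or $\{y = k+\tfrac12\}$: each such line is crossed an even number of times, and each crossing accounts for edges of $\Gamma$ in pairs.

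The main obstacle is the local degree-$2$ analysis in (i): the definition of $R$ couples the ``agreement with $\omega^o$'' pattern with a connectivity condition, and the maximality constraint (every unoccupied site has an occupied neighbor) forbids some local patterns while admitting others, so one must re-verify for each admissible pattern that the corresponding midpoint has degree exactly $2$ in $\Gamma$. Once (i) is established, (ii) and (iii) are essentially geometric and follow the arguments of~\cite{blanca2019phase} with only minor modifications.
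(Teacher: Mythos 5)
The paper does not prove this proposition at all: it is imported verbatim from Blanca et al.\ (their Lemmas~3.1--3.3). The citation is legitimate because every $\eta\in\cX_0$ is in particular an independent set, and $R$, $\gamma$, $\Gamma$ are defined exactly as in the standard hard-core setting, so maximality plays no role and nothing needs to be re-verified. Your sketch therefore reproves a cited result; its architecture (local degree analysis, Jordan curve theorem, diameter bound, parity) is indeed the one used in the reference, but two steps as you state them are not correct.

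First, the claimed local invariant in (i) --- that ``exactly one perpendicular edge at each of $u,w$ lies in $\gamma$'' --- is false. It holds only where the contour passes straight through the vertex $v$. At the turning points of $\Gamma$ (which must exist, since a closed polygon has total turning $2\pi$), both $\Gamma$-neighbors of $v$ lie at the \emph{same} endpoint of $e=\{u,w\}$: for instance, if a single even site $u\in R$ protrudes with three neighbors in $R^c$, the vertex $v$ on the edge toward the middle of the three has both its contour neighbors among the edges incident to $u$, and none incident to $w$. The correct statement to prove is that exactly two of the four candidate edges belong to $\gamma$, with no constraint on how they split between $u$ and $w$; the case analysis (using Lemma~\ref{lemma:contour} and the connectivity/reachability structure of $R$) then still yields degree exactly $2$, so the conclusion survives, but the invariant you propose to verify would be refuted by the very first corner you examine. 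Second, the divisibility-by-$4$ argument is incomplete: your orientation-pairing only gives $\#\mathrm{NE}=\#\mathrm{SW}$ and $\#\mathrm{NW}=\#\mathrm{SE}$, hence evenness, and the ``signed crossings with half-integer lines'' suggestion again only delivers evenness of certain crossing counts. The standard route --- the one this paper itself uses later in Lemma~\ref{lemma:gamma/4} --- is to partition $\gamma$ into the four classes $\gamma_s$, $s\in S_\pm$, according to the direction in which the edge crosses from $\Gamma_\bullet$ to $\Gamma_\circ$, and to show by a shift bijection that all four classes have equal cardinality $|I'_s|$, whence $|\gamma|=4|\gamma_s|$. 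If you intend to prove the proposition rather than cite it, that is the argument to supply.
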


Note that a contour, $\Gamma$, has a finite interior $\Gamma_\bullet$ and an unbounded exterior, $\Gamma_\circ$ (by the Jordan curve theorem), and both
subgraphs $\Gamma_\bullet$ and $\Gamma_\circ$ are connected. Also note that $R \subset \Gamma_\bullet$, and $\Gamma_\bullet$ contains the ``holes'' of $R$.

The next lemma is obvious. It will be used repeatedly in the proof.
\begin{lemma}\label{lemma:contour}
    If $(u,v)$ is an edge in $\Z^2$ with $u \in R$ and $v \in R^c$, then $u$ is an even vertex, $v$ is an odd vertex, and $\eta_u = \eta_v = 0$. 
\end{lemma}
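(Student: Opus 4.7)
The plan is to argue directly from the definition of $R$ together with the hard-core constraint ($\eta \in \cX_0$ implies $\eta_i \eta_j = 0$ whenever $i \sim j$). Recall that $\omega^o_i = 1$ exactly when $i$ is odd (i.e., $i_1 + i_2$ odd), and $R$ is a \emph{connected component} of the subgraph of $\Z^2$ induced by $S := \{i \in \Z^2 \mid \eta_i = \omega^o_i\}$. Two key tautologies will be used repeatedly: (a) if $i$ is odd, then $i \in S$ iff $\eta_i = 1$; (b) if $i$ is even, then $i \in S$ iff $\eta_i = 0$. Combined with the fact that any two adjacent vertices that both lie in $S$ must lie in the same connected component of the induced subgraph, this forces the claimed structure on the edge $(u,v)$.

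First, I would observe that $u$ and $v$ have opposite parities since they are adjacent in $\Z^2$. Suppose for contradiction $u$ is odd. Then $u \in R \subseteq S$ combined with tautology (a) gives $\eta_u = 1$, and the hard-core constraint applied to $u \sim v$ forces $\eta_v = 0$. But $v$ is even, so by tautology (b), $\eta_v = 0$ means $v \in S$. Since $u \sim v$ and $u, v \in S$, the vertex $v$ must lie in the same connected component of the induced subgraph as $u$, i.e., $v \in R$, contradicting $v \in R^c$. Hence $u$ is even (and $v$ is odd), and tautology (b) applied to $u \in R$ yields $\eta_u = 0$.

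To finish, I would show $\eta_v = 0$ by exactly the same trick. Suppose $\eta_v = 1$. Since $v$ is odd, tautology (a) gives $v \in S$, and then $u \sim v$ with $u, v \in S$ again forces $v \in R$, a contradiction. Therefore $\eta_v = 0$, completing the proof.

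The argument is essentially bookkeeping: no step presents a real obstacle. The only point worth flagging is that the conclusion $\eta_v = 0$ does \emph{not} come from the hard-core constraint on $v$ (indeed, $\eta_u = 0$ places no constraint on $\eta_v$); it comes solely from the component structure, i.e., from the fact that if $\eta_v$ were $1$ then $v$ would agree with $\omega^o$ and therefore be absorbed into $R$. This makes it essential to invoke the maximality/connectivity definition of $R$, not just the hard-core property of $\eta$.
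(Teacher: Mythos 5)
Your proof is correct. The paper itself offers no proof of this lemma (it is declared ``obvious''), and your argument---parity of adjacent vertices, the two tautologies relating membership in $S=\{i : \eta_i=\omega^o_i\}$ to the value of $\eta_i$ by parity, the hard-core (independence) constraint to rule out an odd $u\in R$, and the connected-component maximality of $R$ to force $\eta_v=0$---is exactly the bookkeeping the authors had in mind; your closing remark that $\eta_v=0$ follows from the component structure rather than the hard-core constraint is a correct and worthwhile observation.
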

 
 \begin{lemma}[\cite{blanca2019phase}, Theorem~2.2, Lemma~3.4]\label{lemma:contour_number} 
    Let $\mathcal{C}_l^m(\eta), \eta\in \cB_{m,n}^e$, be the collection of contours of length $4l$. Then there exists a constant $\alpha_\star$ and a sub-exponentially growing function $g(l)$ such that
    \[
        |\mathcal{C}_l^m| \le g(l) \cdot \alpha_\star^{4l},
    \]
    where $1.55701 < \alpha_\star < 1.58746$.
\end{lemma}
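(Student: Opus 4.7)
The plan is to bound $|\mathcal{C}_l^m|$ by anchoring each contour at a canonical vertex, reducing the problem to counting closed self-avoiding walks of length $4l$ in $\Z^2_\Diamond$, and then refining the count using the structural constraints imposed by the definition of the contour. Every $\Gamma \in \mathcal{C}_l^m$ encloses $R \supseteq U_m$ by Proposition~\ref{prop:contour}, so $\Gamma$ must cross any line passing through the origin in $\Z^2_\Diamond$. I would anchor $\Gamma$ at, say, its rightmost crossing of the horizontal axis. Since $|\Gamma| = 4l$ bounds the diameter of $\Gamma$, there are only $O(l)$ candidate anchor positions, contributing a polynomial factor that can be absorbed into $g(l)$.

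Once an anchor and a traversal orientation (say, counterclockwise) are fixed, $\Gamma$ becomes a closed self-avoiding walk of length $4l$ in $\Z^2_\Diamond$. A crude bound using the connective constant $\mu \approx 2.638$ of the square lattice would give $\mu^{4l}$, which is far too weak. The refinement exploits Lemma~\ref{lemma:contour}: every edge of $\gamma$ separates an even vertex in $R$ from an odd vertex in $R^c$, both forced to be unoccupied, and the occupancy in the adjacent cross shapes $\cross$ must respect the recovery rule~\eqref{eq:recovery}. Encoding the contour's local state as a finite alphabet indexed by the nearby occupancy patterns in $\Z^2$, and the admissible transitions by a transfer matrix $T$, I would take $\alpha_\star$ to be the Perron eigenvalue of $T$. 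The upper bound $\alpha_\star < 1.58746$ then follows from a Collatz--Wielandt or numerical estimate on the spectral radius of $T$, while the lower bound $\alpha_\star > 1.55701$ comes from exhibiting an explicit periodic family of contours whose count grows at this asymptotic rate.

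The main obstacle is the detailed enumeration of allowed local transitions at each vertex of $\Z^2_\Diamond$ and the rigorous numerical control of the spectral radius. Since the contours in the standard hard-core model analyzed in \cite{blanca2019phase} constitute a strictly larger family than ours --- our maximality condition~\eqref{eq:recovery} only further restricts which $\eta$ are admissible, while leaving the geometric contour structure of Proposition~\ref{prop:contour} unchanged --- their upper bound on $\alpha_\star$ applies \emph{a fortiori}. The cleanest implementation is therefore to invoke \cite[Theorem~2.2 and Lemma~3.4]{blanca2019phase} directly after verifying that our contour definition coincides with theirs under the natural identification $\cX_0 \subset \sI(\Z^2)$, with $g(l)$ absorbing the $O(l)$ factor from the anchor choice.
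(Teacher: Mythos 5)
Your final recommendation --- invoke \cite[Theorem~2.2, Lemma~3.4]{blanca2019phase} directly, noting that since $\cX_0 \subset \sI(\Z^2)$ the contours arising here form a subfamily of those in the standard hard-core model so the bound applies \emph{a fortiori} --- is exactly what the paper does: it states the lemma with that citation and offers no independent proof. Your sketch of the internal transfer-matrix/connective-constant argument is a faithful outline of how the cited bound is obtained, so the proposal is correct and essentially coincides with the paper's approach.
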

By a subexponentially growing function $g(l)$ we mean a function that satisfies
$g(l)< \exp(\epsilon l)$ for any $\epsilon>0$ and all $l>l_0(\epsilon)$.

\subsubsection{The shift operation}
The $m$-oddness condition guarantees the existence of a contour surrounding $U_m$. 
A standard approach to establish~\eqref{eq:even_boundary} -- the low probability of $m$-odd configurations under $\mu_n^e$ -- is to show that such contours are unlikely. 
To this end, we construct a shift operation $\phi : \cB_{m,n}^e \mapsto \cA_{m,n}^e \setminus \cB_{m,n}^e$ that eliminates the contour enclosing $U_m$. 
We then demonstrate that each image under $\phi$ corresponds to a configuration with significantly higher $\mu_n^e$ probability.

Let $s \in S_{\pm}:=\{(1,0),(-1,0),(0,1),(0,-1)\}$. An \emph{$s$-shift} moves vertex $v$ to $v+s$. The $s$-shift of a subset $S\subset \Z^2$ amounts to moving all of its vertices by $s$, namely, $S+s:= \{v+s \mid v \in S\}$.
The shift of a configuration $\eta \in \cB_{m,n}^e$ in the $s$ direction by definition results in a configuration, $\theta_s(\eta)$, 
obtained as a result of the following two steps (see Fig.~\ref{fig:contour-shift} for an illustration):
\begin{enumerate}
    \item \textbf{Shifting the vertex interior:} Apply the $s$-shift to $\supp(\eta) \cap \Gamma_\bullet$ and leave $\supp(\eta) \cap \Gamma_\circ$ unchanged.
    The resulting configuration has support $I_s := \left( \left( \supp(\eta) \cap \Gamma_\bullet \right) + s \right) \cup \left( \supp(\eta) \cap \Gamma_\circ \right)$.

    \item \textbf{Adding new occupied even vertices:}
    Let $I'_s = \{v \in \Gamma_\bullet \mid v-s \in \Gamma_\circ \}$ be the set of (even) vertices that are moved from $\Gamma_\circ$ to $\Gamma_\bullet$ by applying the $s$-shift. In this step, we make all the vertices in $I'_s$ occupied and let $\theta_s(\eta)$ be the configuration with support $\bar{I}_s := I_s \cup I'_s$.
\end{enumerate}

\begin{lemma}\label{prop:shift}
    Let $\eta \in \cB_{m,n}^e$ and let $s\in S_\pm$.
The shifted configuration $\theta_s(\eta)$ satisfies the following: \\
(a) $\theta_s(\eta)\in \cX_0,$\\ (b)  $|\theta_s(\eta)_{U_n}| - |\eta_{U_n}| = |I'_s|$. \\
Moreover, there exists $s\in S_\pm$ such that\\
(c) $\theta_s(\eta) \in \cA_{m,n}^e \setminus \cB_{m,n}^e$, i.e., it is $m$-even.
\end{lemma}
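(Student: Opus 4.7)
The plan is to verify the three claims by direct case analysis, relying on Lemma~\ref{lemma:contour} to control parity and occupancy near the contour. Throughout I will use three structural consequences of that lemma: (i) the inner boundary of $\Gamma_\bullet$ adjacent to $\Gamma_\circ$ lies inside $R$ and consists of even vertices unoccupied in $\eta$; (ii) the outer boundary of $\Gamma_\circ$ adjacent to $\Gamma_\bullet$ consists of odd vertices unoccupied in $\eta$; and (iii) consequently $\supp(\eta)\cap\Gamma_\bullet$ is made of odd vertices of $R$ together with (possibly) occupied vertices deep inside holes, while $I'_s$ is contained in the inner boundary of $\Gamma_\bullet$ and therefore consists entirely of even vertices.

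For independence in part~(a), I partition $\bar{I}_s$ into $(\supp(\eta)\cap\Gamma_\bullet)+s$, $\supp(\eta)\cap\Gamma_\circ$, and $I'_s$. Within each piece independence is immediate: the first is a translate of an independent set, the second is inherited from $\eta$, and the third consists of pairwise non-adjacent even vertices. For cross-piece adjacencies, an edge between the shifted interior and $I'_s$ is ruled out by parity (both endpoints would be even), while any edge with one endpoint in $\supp(\eta)\cap\Gamma_\circ$ and the other in either other piece would cross the contour and contradict Lemma~\ref{lemma:contour}. Maximality is then the main technical point: every $0$-site of $\theta_s(\eta)$ must have a $1$-neighbor. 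Sites well inside $\Gamma_\bullet$ or $\Gamma_\circ$ inherit a $1$-neighbor from the corresponding translated (resp.\ unchanged) copy of $\eta$; the delicate case is the inner boundary of $\Gamma_\bullet$ on the $-s$ side, where the shift has translated support away. This is precisely the role of $I'_s$: by definition it places new even occupied vertices in exactly those vacated positions, supplying the missing $1$-neighbors without creating new adjacencies.

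For part~(b), the three pieces above are pairwise disjoint (by (iii) and the disjointness of $\Gamma_\bullet,\Gamma_\circ$), and $\Gamma_\bullet\subset U_n$ since $\Gamma$ separates $U_m\subset R$ from $U_n^c$, so no vertex is shifted outside $U_n$; hence $|\theta_s(\eta)_{U_n}|-|\eta_{U_n}|=|I'_s|$. For part~(c), the $m$-oddness of $\eta$ yields some $U_m^{\ast}\subseteq\{i:\eta_i=\omega^o_i\}\subseteq R\subseteq\Gamma_\bullet$, and I pick $s$ to be the unit vector that carries $U_m^{\ast}$ onto the ``opposite'' set (for example $s=(1,0)$ sends $U_m^W$ to $U_m^E$); call the image $U_m^{\ast'}$. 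For $u\in U_m^{\ast'}$ a short parity computation gives $\omega^o_{u-s}=\omega^e_u$; if $u\in\Gamma_\bullet$ then $\theta_s(\eta)_u=\eta_{u-s}=\omega^e_u$, and if $u\in\Gamma_\circ$ then Lemma~\ref{lemma:contour} applied to the contour edge $(u-s,u)$ forces $u$ to be odd with $\eta_u=0$, so $\theta_s(\eta)_u=0=\omega^e_u$. Thus $\theta_s(\eta)$ is $m$-even on $U_m^{\ast'}$, and it cannot simultaneously be $m$-odd because an $m$-even witness and an $m$-odd witness both contain $U_m$, on which $\omega^e$ and $\omega^o$ disagree at every site.

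The main obstacle I anticipate is the maximality step of part~(a): rigorously identifying which even inner-boundary vertices of $\Gamma_\bullet$ lose their $\eta$-occupied neighbor under the shift, and verifying that $I'_s$ restores maximality without creating any new adjacency violation with either the shifted interior or the unchanged exterior. The parity identities guaranteed by Lemma~\ref{lemma:contour} will be doing the heavy lifting throughout.
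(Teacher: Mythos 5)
Your decomposition of $\bar{I}_s$ into the three pieces, and your arguments for parts (b) and (c), track the paper's proof closely and are essentially sound (your explicit treatment of the case $u\in\Gamma_\circ$ in part (c) is, if anything, more careful than the paper's). However, part (a) has two genuine gaps. The first is the claim that an edge between $(\supp(\eta)\cap\Gamma_\bullet)+s$ and $I'_s$ is ``ruled out by parity (both endpoints would be even).'' By your own structural observation (iii), $\supp(\eta)\cap\Gamma_\bullet$ contains, besides the odd vertices of $R$, possibly even occupied vertices inside holes of $R$; after the shift these become \emph{odd}, and an odd vertex can perfectly well be adjacent to an even vertex of $I'_s$. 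Excluding such edges needs a different argument: either a topological one (any such configuration would place the hole adjacent to the unbounded component of $R^c$ via $v-s\in\Gamma_\circ$, contradicting that it is a hole), or the case analysis on the direction $s'$ with $x=y+s'$ that the paper carries out, which rests on Lemma~\ref{lemma:contour} (an $R$-to-$R^c$ edge cannot have an occupied endpoint) rather than on parity.

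The second, and more serious, gap is that the maximality of $\bar{I}_s$ is only sketched, and you yourself flag it as the step you have not completed. This is the technical core of the lemma and does not reduce to the heuristic that $I'_s$ ``fills the vacated positions'': the vertices at risk of losing all occupied neighbours are not only those on the inner boundary of $\Gamma_\bullet$ facing $-s$, but also vertices of $\Gamma_\circ$ whose cross meets $R$, and vertices of $\Gamma_\bullet$ whose cross meets $\Gamma_\circ$. The paper's argument assumes an all-zero cross at some $x$ in $\theta_s(\eta)$ and produces an all-zero cross in $\eta$ (at $x$ when $x\in\Gamma_\circ$, at $x-s$ when $x\in\Gamma_\bullet$); the latter case requires checking each of the five sites of $\cross_{x-s}$ separately, splitting each time on whether the relevant neighbour lies in $\Gamma_\bullet$ or $\Gamma_\circ$ and invoking Lemma~\ref{lemma:contour} or the definition of $\theta_s$ accordingly. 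Until that verification is actually carried out, the proof of (a) — and hence of the lemma — is incomplete.
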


\begin{proof}
 First let us show (b). 
  To this end we show that $|\eta_{U_n}| = |I_s \cap U_n|$ and $I_s \cap I'_s = \emptyset$. Since $\Gamma_\bullet \subset U_n$, to show that $|I_s \cap U_n| = |\eta_{U_n}|$ it suffices to verify that
(1),  $|( \supp(\eta) \cap \Gamma_\bullet ) + s| = |\supp(\eta) \cap \Gamma_\bullet |$ and (2), $ \supp(\eta) \cap \Gamma_\circ$ is disjoint from $\left( \supp(\eta) \cap \Gamma_\bullet \right) + s$.
    
Both of these follow by definition. Claim (1) follows since shifting by $s$ neither creates nor removes vertices of $\eta_{{U_n}}$ and
    (2) holds since neighbors of every vertex in $\supp(\eta) \cap \Gamma_\bullet$ are also contained in $\Gamma_\bullet$ (Lemma~\ref{lemma:contour}), thus $(\supp(\eta) \cap \Gamma_\bullet) + s \subset \Gamma_\bullet$. 
    
\vspace*{.1in}
Now we will prove that $\theta_s(\eta) \in \cX_0$ for all $s \in S_\pm$. The proof of this part comprises 4 steps. In the first three steps, we show that $\bar I_s := I_s \cup I'_s$ forms an independent set, and in the final step we show that this independent set is maximal.

(1) $I_s \in \sI(\Z^2)$. 
It is clear that $I_s \cap \Gamma_\bullet = (\supp(\eta) \cap \Gamma_\bullet) + s \subset \sI(Z^2)$ and $I_s \cap \Gamma_\circ  = \supp(\eta) \cap \Gamma_\circ \subset \sI(\Z^2)$.
Moreover, any two vertices $x\in(\supp(\eta) \cap \Gamma_\bullet) + s \subset \Gamma_\bullet$ and $y\in\supp(\eta) \cap \Gamma_\circ$ are not adjacent in $\Z^2$:
otherwise, $(x,y)$ would be an edge from $R$ to $R^c$ with $\eta_y = 1$. This contradicts Lemma~\ref{lemma:contour}, proving our claim.

(2) $I'_s \in \sI(\Z^2)$. This is straightforward since by Lemma~\ref{lemma:contour}, $I'_s$ is formed of only even vertices.
  
(3) $\bar I_s \in \sI(\Z^2)$. In view of (1)-(2), we only need to show that there are no edges in $\Z^2$ between $I'_s$ and $I_s$.
Let $x \in I'_s$ and $y\in I_s$ and suppose that $x\sim y$. There are two possibilities for $y$, corresponding to the two subsets that form $I_s$.
If $ y \in \supp(\eta) \cap \Gamma_\circ$ then $(x,y)$ is an edge from $R$ to $R^c$ with $\eta_y = 1$, which is ruled out by Lemma~\ref{lemma:contour}.

Now suppose that $y \in ( \supp(\eta) \cap \Gamma_\bullet) + s$ and note that $y \in \Gamma_\bullet$; also $x-s\in R^c$ by definition of $I_s'$.
    
Since we assumed that $x\sim y$, this means that $x=y+s'$ for some $s'\in S_\pm$. 
\begin{enumerate}
    \item[({\em a})] $s'= -s$. Then $x = y-s \in \supp(\eta) \cap \Gamma_\bullet$, which implies $\eta_x=1$. Therefore, $(x-s,x)$ is an edge that connects $R^c$ and $R$, with $\eta_x=1$, which is not possible.
    \item[({\em b})] $s'= s$. Then $y=x-s\in \Gamma_\circ$, but also $y\in \Gamma_\bullet$ by assumption.
    \item[({\em c})] $s'\perp s$ is one of the two remaining directions. In this case, the vertices $y-s \in \supp(\eta)\subset R$ and $x-s\in R^c$ are adjacent,
    and $\eta_{y-s}=1$, which is again impossible.
\end{enumerate}

  (4) Finally, let us show that $\bar{I}_s  \in \sM(\Z^2)$. 
    Assume toward a contradiction that there exists a vertex $x \notin \bar{I}_s$ such that $\bar{I}_s \cup \{x\} \in \sI(\Z^2)$, or, equivalently, that $(\theta_s(\eta))_{\cross_x} = 0$,  where (by abuse of notation) this refers to a cross-shape with center at $x$ 
    and 0's at all five sites. We will show that in this case there exists an all-zero cross in $\eta$, which contradicts the assumption that $\eta\in\cX_0$.
    
    \begin{enumerate}[label={\upshape(}\alph*{\upshape})]
        \item  First consider the case when $x \in \Gamma_\circ$. 
    If $\cross_x \subset \Gamma_\circ$ then $\theta_s(\eta)_{\cross_x} = \eta_{\cross_x} = 0$, which is impossible.
    If $x\in \Gamma_\circ$ and $\cross_x \cap R \ne \emptyset$, then by Lemma~\ref{lemma:contour} we have $\eta_{\cross_x \cap \Gamma_\bullet} = 0$. 
    At the same time, by definition of $\theta_s$, we have $\theta_s(\eta)_{\cross_x \cap \Gamma_\circ}  =\eta_{\cross_x \cap \Gamma_\circ} =0$. 
This yields the same impossible relation $\eta_{\cross_{x}} = 0$.

\vspace*{.1in} For the next step, refer to Fig~\ref{fig:x-neighbors} for the mentioned vertices in the grid and their adjacency 
relations.
    \item It remains to consider the case when $x \in \Gamma_\bullet$. 
    If $\cross_x \subset \Gamma_\bullet$ then $\cross_{x-s} \subset \Gamma_\bullet$ because $x$ cannot be adjacent to any vertex in $I'_s$.
    Therefore, $\theta_s(\eta)_{\cross_x} = \eta_{\cross_{x-s}} = 0$, a contradiction.       
   It remains to consider the case when $\cross_x \cap \Gamma_\circ \neq \emptyset$. We cannot have $x-s \in \Gamma_\circ $ since $x \notin I'_s$.
    Therefore, $x-s \in \Gamma_\bullet$ and $\eta_{x-s} = \theta_s(\eta)_x = 0$. We are going to prove that $\eta_{\cross_{x-s}}=0$, establishing a contradiction.

    \begin{figure}
        \centering
        \includegraphics[width=0.5\linewidth]{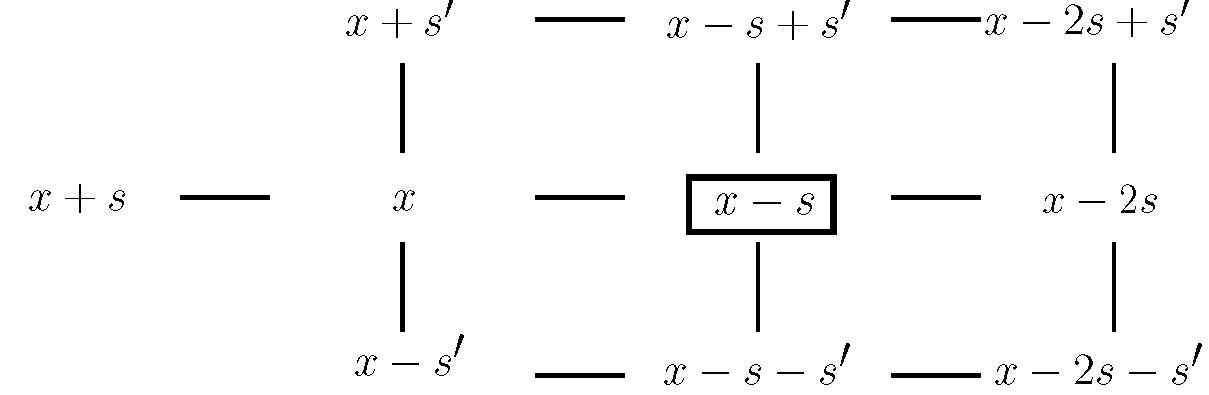}
        \caption{Illustrating Part (4) of the proof of Prop.~\ref{prop:shift}. }
        \label{fig:x-neighbors}
    \end{figure}
    
    \begin{itemize}
    \item $\eta_x = 0$.  To see this, if $x+s \in \Gamma_\circ$ then $\eta_x = 0$ by Lemma~\ref{lemma:contour}.
    On the other hand, if $x+s \in \Gamma_\bullet$ then $\eta_x = \theta_s(\eta)_{x+s} = 0$. 

 \item $\eta_{x-2s} = 0$. Note that $x-2s \in \Gamma_\bullet$ (otherwise $x-s \in I'_s$). Thus, $\eta_{x-2s} = \theta_s(\eta)_{x-s} = 0$.
    
    \item Let $s'\perp s$. We argue that both $\eta_{x-s+s'}$ and $\eta_{x-s-s'}$ are zero,  relying on symmetric arguments. If $x-s+s' \in \Gamma_\circ$ then $(x-s, x-s+s')$ is an edge from $R$ to $R^c$, and $\eta_{x-s+s'} =0$ by Lemma~\ref{lemma:contour}. 
    If $x - s + s' \in \Gamma_\bullet$ then $\eta_{x - s + s'} = \theta_s(\eta)_{x + s'} = 0$.

    Switching to $\eta_{x-s-s'}$, if $x-s-s' \in \Gamma_\circ$ then $(x-s, x-s'-s)$ is an edge from $R$ to $R^c$, and thus $\eta_{x-s'-s} = 0$.
    If $x-s-s' \in \Gamma_\bullet$ then $\eta_{x-s'-s} = \theta_s(\eta)_{x-s'} = 0$.
    \end{itemize}
   \end{enumerate}
Altogether this implies that $\eta_{\cross_{x-s}} = 0$. 

This establishes part (a). 

Part (c). Finally, we show that there exists an $s$ such that $\theta_s(\eta) \in \cA_{m,n}^e \setminus \cB_{m,n}^e$. By the $m$-oddness of $\eta$, $R$ contains at least one of the sets $U_m^\ast$ with $\ast\in\{W,E,S,N\}$.

Assume without loss of generality that $U_m^W \subset R$, then $U_m^W \subset \Gamma_\bullet$, and we have that 
\[
\theta_{(1,0)} (\eta)_{U_m^E} = \eta_{U_m^W} = \omega^o_{U_m^W}.
\]
Therefore, for $i \in {U_m^E}$, we have 
\[
    \theta_{(1,0)} (\eta)_i = \eta_{i - (1,0)} = \begin{cases}
        0 & i \text{ odd} \\
        1 & i \text{ even}
    \end{cases}
\]
since $i - (1,0) \in U_m^W$. Hence, $\theta_{(1,0)} (\eta)_{U_m^E} = \omega^e_{U_m^E}$ and $\theta_{(1,0)}(\eta)$ is $m$-even.
\qedhere
 \end{proof}

Note that both (a) and (b) of this lemma hold for all shifts $s$; it is only part (c), the evenness of the shifted
configuration, where we rely on the assumption that $\eta_i=\omega_i^o$ for $i$ in the row or column added to $U_m$.

\begin{definition}[The mapping $\phi$] \label{def:mapping}
    For $\eta \in \cB_{m,n}^e$ we put $\phi(\eta):=\theta_s(\eta)$, where $s$ be a value of the shift such that $\theta_s(\eta) \in \cA_{m,n}^e \setminus \cB_{m,n}^e$ (such shifts exist by Lemma~\ref{prop:shift}).
\end{definition}

\begin{lemma}\label{lemma:gamma/4}
    Let $\eta \in \cB_{m,n}^e$, we have that
    \begin{equation*}
        |\eta_{U_n}| - |\phi(\eta)_{U_n}| = - \frac{|\gamma(\eta)|}{4}.
    \end{equation*}
\end{lemma}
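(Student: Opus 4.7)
The plan is to reduce the lemma to a counting statement about the contour and then exploit its cyclic structure. By Lemma~\ref{prop:shift}(b), $|\phi(\eta)_{U_n}|-|\eta_{U_n}|=|I'_s|$ for the shift $s\in S_\pm$ prescribed in Definition~\ref{def:mapping}, so it suffices to establish $|I'_s|=|\gamma(\eta)|/4$. I would do this by first identifying $I'_s$ with a one-directional subfamily of $\gamma(\eta)$ and then showing that each of the four such subfamilies has the same size.

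For the first step, let $\gamma_{-s}:=\{(u,u-s)\in\gamma(\eta):u\in R\}$. I claim the map $v\mapsto(v,v-s)$ is a bijection from $I'_s$ to $\gamma_{-s}$. Given $v\in I'_s$, we have $v\in\Gamma_\bullet$ and $v-s\in\Gamma_\circ\subset R^c$. We must have $v\in R$: otherwise $v$ would lie in a bounded component of $R^c$ contained in $\Gamma_\bullet$ (a ``hole'' of $R$), but all neighbors of such a $v$ stay inside $\Gamma_\bullet$, contradicting $v-s\in\Gamma_\circ$. Since $v-s$ has a path to infinity inside $\Gamma_\circ\subset R^c$, the edge $(v,v-s)$ belongs to $\gamma(\eta)$, and by construction it lies in $\gamma_{-s}$. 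Conversely, if $(u,u-s)\in\gamma_{-s}$, the path-to-infinity condition forces $u-s\in\Gamma_\circ$, hence $u\in\Gamma_\bullet$, so $u\in I'_s$. This proves $|I'_s|=|\gamma_{-s}|$.

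For the second step, I would prove $|\gamma_{s'}|=|\gamma(\eta)|/4$ for every $s'\in S_\pm$ using two balance arguments. By Proposition~\ref{prop:contour}, $\Gamma$ is a simple closed cycle in $\Z^2_\Diamond$, and each edge of $\Z^2_\Diamond$ joins two perpendicular edges of $\Z^2$. Therefore consecutive vertices of $\Gamma$ correspond to $\Z^2$-edges of opposite orientation, so horizontal and vertical midpoints strictly alternate around the cycle, giving
\[
|\gamma_{(1,0)}|+|\gamma_{(-1,0)}|=\tfrac{|\gamma(\eta)|}{2}=|\gamma_{(0,1)}|+|\gamma_{(0,-1)}|.
\]
For the second balance, fix a row $\{(x,y_0):x\in\Z\}$ and scan left-to-right. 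Since $\eta_i=\omega^e_i$ outside $U_n$, the set $R\subseteq U_n$ is bounded, so the scan starts and ends in $R^c$. Each $R\to R^c$ crossing contributes a $\gamma_{(1,0)}$-edge and each $R^c\to R$ crossing contributes a $\gamma_{(-1,0)}$-edge; since these two types alternate along the row, their counts agree. Summing over $y_0$ gives $|\gamma_{(1,0)}|=|\gamma_{(-1,0)}|$, and the symmetric column argument yields $|\gamma_{(0,1)}|=|\gamma_{(0,-1)}|$. Together these identities give $|\gamma_{s'}|=|\gamma(\eta)|/4$ for all $s'\in S_\pm$.

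Combining the two steps, $|I'_s|=|\gamma_{-s}|=|\gamma(\eta)|/4$, so Lemma~\ref{prop:shift}(b) yields $|\eta_{U_n}|-|\phi(\eta)_{U_n}|=-|\gamma(\eta)|/4$. The only subtlety I foresee is the hole-handling in the bijection, but Proposition~\ref{prop:contour} together with the fact that $\gamma$ excludes inner-boundary edges of holes dispose of it cleanly.
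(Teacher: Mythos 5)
Your reduction to showing $|I'_s|=|\gamma(\eta)|/4$ via Lemma~\ref{prop:shift}(b), and the bijection between $I'_s$ and the edges of $\gamma$ crossed in the direction $s$, coincide with the paper's proof. Where you genuinely diverge is in how you prove that each of the four directional subfamilies has size exactly $|\gamma|/4$. The paper gets this almost for free by a slick auxiliary-configuration trick: it considers $\omega=\omega^o_{\Gamma_\bullet}\omega^e_{\Gamma_\circ}$, observes that $\gamma(\omega)=\gamma(\eta)$ and that $\theta_s(\omega)=\omega^e$ for \emph{every} $s\in S_\pm$, so that $|I'_s|=|\omega^e_{U_n}|-|\omega_{U_n}|$ is manifestly independent of $s$; since the four sets $\gamma_s$ partition $\gamma$ and all have the same cardinality, each equals $|\gamma|/4$. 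Your route is more elementary and self-contained: the strict alternation of horizontal and vertical midpoints around the cycle in $\Z^2_\Diamond$ gives the horizontal/vertical balance, and a sweep argument gives the left/right (and up/down) balance. Both arguments are legitimate; yours avoids re-invoking the shift machinery on an auxiliary configuration, while the paper's avoids any geometric case analysis.

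One step of your sweep argument is stated imprecisely and, as written, does not quite prove what you need. You scan a row for crossings between $R$ and $R^c$ and assert that each such crossing contributes a $\gamma$-edge of the corresponding orientation. That is false for crossings into or out of a hole of $R$: those edges have their $R^c$-endpoint in a bounded component of $R^c$ and are excluded from $\gamma$ by definition. The alternation of \emph{all} $R/R^c$ crossings therefore balances a larger set than the one you are counting. The conclusion survives, because hole-crossings within a row come in oppositely oriented pairs (each maximal row-interval lying in a hole is bounded and contributes one crossing of each type), so their exclusion preserves the balance; but this needs to be said. The cleaner fix is to sweep with respect to $\Gamma_\bullet$ versus $\Gamma_\circ$ rather than $R$ versus $R^c$: every edge of the row joining $\Gamma_\bullet$ to $\Gamma_\circ$ is a $\gamma$-edge (its interior endpoint lies in $R$, exactly as in your bijection step), the row starts and ends in $\Gamma_\circ$ since $\Gamma_\bullet$ is bounded, and the entering/exiting crossings strictly alternate. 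With that adjustment your proof is complete and correct.
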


\begin{proof}
Let $\Gamma_\bullet$ and $\Gamma_\circ$ be the vertex interior and exterior of $\Gamma(\eta)$, respectively. Let $I'_s := \{v \in \Gamma_\bullet \mid v - s \in \Gamma_\circ \}$.
Define a configuration $\omega= \omega^o_{\Gamma_\bullet} \omega^e_{\Gamma_\circ}$. 
Note that $\omega \in \cB_{m,n}^e$ and $\gamma(\omega) = \gamma(\eta)$ by construction. 
Moreover, $\theta_s(\omega) = \omega^e$ for every $s \in S_\pm$. Indeed, $\theta_s(\omega) \in \cX_0$ by the proof of Lemma~\ref{prop:shift} (step (4)), and only even vertices are occupied in $\theta_s(\omega)$.
Therefore, by Lemma~\ref{prop:shift}, we have 
$|\omega^e_{U_n}| - |\omega_{U_n}| = |I'_s|$.

Let $\gamma_s$ be the set of edges $(u,v)$ with $u \in \Gamma_\bullet, v \in \Gamma_\circ$, and $v = u-s$. 
The map from $\gamma_s$ to $I'_s$ that sends $(u,v)$ to $u$ is bijective, and thus $|\gamma_s| = |I'_s|$.
Moreover, note that $\gamma = \bigcup_{s \in  S_\pm} \gamma_{s}$ and the sets $\gamma_s$ are disjoint. 
Thus, $|\gamma_s| = |\gamma|/4$ for all $s \in S_\pm$, which was to be proved.
\end{proof}

Now we are in a position to prove~\eqref{eq:shift_bound}.
\begin{lemma}\label{lemma:1/3}
    Let $\lambda > \alpha_\star^4$. Fix $n > m$ and let $m$ be sufficiently large and $\tau \in \cA_{m,n}^e \setminus \cB_{m,n}^e$. Then
    \begin{equation*}
        \sum_{\substack{\eta \in \cB_{m,n}^e \\ \phi(\eta) = \tau}} \lambda^{|\eta_{U_n}| - |\tau_{U_n}|} \le \frac{1}{3}.
    \end{equation*}
\end{lemma}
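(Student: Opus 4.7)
The plan is to group the sum by contour length and combine the counting bound from Lemma~\ref{lemma:contour_number} with the identity from Lemma~\ref{lemma:gamma/4}, then control a geometric series using the hypothesis $\lambda>\alpha_\star^4$.

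First, by Lemma~\ref{lemma:gamma/4}, whenever $\eta\in\cB_{m,n}^e$ satisfies $\phi(\eta)=\tau$ we have $|\eta_{U_n}|-|\tau_{U_n}|=-|\gamma(\eta)|/4$. Writing $|\gamma(\eta)|=4\ell$, the summand becomes simply $\lambda^{-\ell}$. By Proposition~\ref{prop:contour}, $|\gamma(\eta)|\geq 2\sqrt{2}\,m$, which forces $\ell\geq \ell_0:=\lceil m/\sqrt{2}\rceil$.

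Second, I would bound the number of preimages of $\tau$ having a prescribed contour $\Gamma$. Given $\Gamma$ and the shift direction $s\in S_\pm$ that $\phi$ used on $\eta$, the pair $(\Gamma,s)$ determines the set $I'_s=\{v\in\Gamma_\bullet\mid v-s\in\Gamma_\circ\}$, and the shift construction is invertible on its image: outside $\Gamma_\bullet$, $\eta$ agrees with $\tau$ (so in particular with $\omega^e$ outside $U_n$), while inside $\Gamma_\bullet$ we recover $\supp(\eta)\cap\Gamma_\bullet$ by removing $I'_s$ from $\supp(\tau)\cap\Gamma_\bullet$ and then applying the inverse shift $-s$. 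Hence, for any fixed $\ell\geq\ell_0$, the number of $\eta$ with $\phi(\eta)=\tau$ and $|\gamma(\eta)|=4\ell$ is at most $4\,|\mathcal{C}_\ell^m|$, where the factor $4$ accounts for the choice of $s$ and $\mathcal{C}_\ell^m$ is the family of contours of length $4\ell$ surrounding $U_m$ introduced in Lemma~\ref{lemma:contour_number}.

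Combining the two steps with Lemma~\ref{lemma:contour_number},
\begin{equation*}
    \sum_{\substack{\eta\in\cB_{m,n}^e\\ \phi(\eta)=\tau}}\lambda^{|\eta_{U_n}|-|\tau_{U_n}|}
    \;\leq\; \sum_{\ell=\ell_0}^{\infty} 4\,|\mathcal{C}_\ell^m|\,\lambda^{-\ell}
    \;\leq\; 4\sum_{\ell=\ell_0}^{\infty} g(\ell)\left(\frac{\alpha_\star^{4}}{\lambda}\right)^{\ell}.
\end{equation*}
Since $\lambda>\alpha_\star^4$, choose $\epsilon>0$ small enough that $e^{\epsilon}\alpha_\star^4/\lambda<1$; using that $g(\ell)\leq e^{\epsilon\ell}$ for $\ell$ large, the series converges and its tail from $\ell_0$ onward vanishes as $m\to\infty$. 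Consequently, for all sufficiently large $m$ the right-hand side is at most $1/3$, which is the claim.

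The main obstacle I foresee is the clean justification of the preimage count in the second step, i.e., verifying that $(\Gamma,s)$ together with $\tau$ truly determine $\eta$. One must check that the inverse-shift reconstruction described above yields a configuration in $\cB_{m,n}^e$ whose actual contour and shift direction under $\phi$ agree with the prescribed $(\Gamma,s)$, so that no double counting occurs and no spurious preimages are missed. Once this bookkeeping is settled, the rest is a routine geometric-series estimate.
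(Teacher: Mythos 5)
Your proposal is correct and follows essentially the same route as the paper's proof: convert the exponent to $-|\gamma(\eta)|/4$ via Lemma~\ref{lemma:gamma/4}, bound the preimages of $\tau$ by at most four per contour (one per shift direction), and then sum over contours using Lemma~\ref{lemma:contour_number} and the lower bound $|\Gamma|\ge 2\sqrt{2}\,m$ to get a vanishing tail of a convergent series. Your explicit justification of the preimage count via inverse-shift reconstruction is a welcome elaboration of a step the paper merely asserts.
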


\begin{proof} For a given contour $\Gamma\in \cC^m$ there are at most four $\eta \in \cB_{m,n}^e$ for which $\phi(\eta) = \tau$ (at most one for each shift $s \in S_\pm$.) Together with Lemma~\ref{lemma:gamma/4}, we therefore obtain
      \begin{equation*}
        \begin{split}
            \sum_{\substack{\eta \in \cB_{m,n}^e \\ \phi(\eta) = \tau}} \lambda^{|\eta_{U_n}| - |\tau^e_{U_n}|} = \sum_{\substack{\eta \in \cB_{m,n}^e \\ \phi(\eta) = \tau}} \lambda^{-\frac{|\gamma(\eta)|}{4}}  {\le 4} \sum_{\Gamma \in \mathcal{C}^m}\lambda^{-\frac{|\Gamma|}{4}}.
        \end{split}
    \end{equation*}
    Let $\mathcal{C}^m := \bigcup_{l\ge 4}\mathcal{C}_l^m$ be the collection of all possible contours and
   let us bound the last sum using Lemma~\ref{lemma:contour_number}:
    \begin{equation*}
        {4} \sum_{\Gamma \in \mathcal{C}^m} \lambda^{-\frac{|\Gamma|}{4}} = {4} \sum_{4l \ge 2\sqrt{2} m } \frac{|\mathcal{C}_l^m|}{\lambda^l} \le {4} \sum_{l \ge \frac{\sqrt{2}m}{2}} \frac{g(l)\alpha_\star^{4l}}{\lambda^l} < \frac{1}{3},
    \end{equation*}
    where the last inequality follows since $\alpha_\star^4<\lambda$, so the sum
    on $l$ represents the tail of a convergent series.
\end{proof}

We have established \eqref{eq:even_boundary}. The proof of~\eqref{eq:odd_boundary} proceeds analogously by interchanging the roles of even and odd vertices. In conclusion, \eqref{eq:distinguish} holds true for $\lambda > \alpha_\star^4 > 6.3506$, and this finishes the proof of Theorem~\ref{thm:main}.

\begin{remark}\label{remark: differences} As noted above, the general idea of proving phase coexistence in the hard-core model 
(as well as models with positive temperature) was introduced in Dobrushin's work \cite{dobrushin1968problem}. These proofs find a distinguishing event that has different probability under measures that follow different boundary conditions. In \cite{dobrushin1968problem}, this event was simply occupancy of a single site in $\Lambda\Subset\Z^\nu$. The best known estimate 
of the activity value above which there occurs a phase transition \cite{blanca2019phase} relies on a variation of the $m$-odd and $m$-even events defined above. The versions of the events used in \cite{blanca2019phase} lead to complications in the proof; in particular, issues arise around the proof of their Lemma 7. Choosing a new distinguishing event, we also had to construct a new proof of the main claim, Lemma~\ref{prop:shift}. As one of the steps, we additionally had to establish maximality of the shifted configuration, which is obviously not needed in \cite{blanca2019phase}. 
\hfill$\triangleleft$
\end{remark}

\section{The maximal hard-core model at low activity} \label{sec:low avtivity}
As remarked above, the maximal hard-core model has a peculiar behavior at low activity: since every admissible configuration is an MIS, there are nontrivial ground states. This is unlike the standard hard-core model, where there is only one ground state at low activity, namely, the all-zero configuration. Our main result in this section will be obtained as an application of the Pirogov-Sinai theory, which shows the existence of extremal Gibbs measures generated by the periodic ground states of the maximal hard-core model. A general introduction to this theory is given in numerous places, among which we found \cite[Ch.7]{friedliLattice2018}, \cite{zahradnik1984alternate,zahradnik1998short,cannon2024pirogov,mazel2025high} particularly well attuned to our study. Two main steps have to be cleared before this theory can be applied, namely, classification of the periodic ground states and checking the ``Peierls condition'' on the contour growth as explained below. The bulk of this section is devoted to these two steps.

\subsection{Periodic ground states}\label{sec: periodic ground states}

A configuration $\omega \in \cX_0$ is called periodic if there exists $v \in \Z^2$ such that $\omega_i = \omega_{i +  v}$ for all $i \in \Z^2$.
Let $\cX_0^{\text{\rm per}}$ be the set of periodic configurations in $\cX_0$. 
For $\omega \in \cX_0^{\text{\rm per}}$, the following limit exists and called the \emph{energy density}:
\[
e(\omega) := \lim_{n \to \infty} \frac{1}{|U_n|} \sH_{\Lambda}(\omega) = \lim_{n \to \infty} \frac{|\omega_{U_n}|}{|U_n|} (- \ln \lambda),
\]
where $|\omega_\Lambda|$ denotes the number of occupied sites in $\Lambda\Subset\Z^2$.

It is known~\cite[Lemma 7.4]{friedliLattice2018} that $\eta \in \cX_0^{\text{\rm per}}$ is a (periodic) ground state if and only if its energy density is minimal, that is 
\[
e(\eta) = \inf_{\omega \in \cX_0^{\text{\rm per}}} e(\omega).
\]

For $\omega \in \cX_0^{\text{\rm per}}$, define the density of $\omega$ as follows:
\[
\delta(\omega) := \lim_{n \to \infty} \frac{|\omega_{U_n}|}{|U_n|}.
\]
Note that when $\lambda > 1$, $e(\omega)$ is minimized by maximizing $\delta(\omega)$. On the other hand, when $0 < \lambda < 1$, $e(\omega)$ is minimized by minimizing $d(\omega)$. 
Therefore, periodic ground states are equivalently defined as the sparest and densest periodic configurations of $\cX_0$ 
depending on whether the activity $\lambda<1$ or $\lambda > 1$.

\begin{theorem}\label{prop: density}
    For $\omega \in \cX_0^{\text{\rm per}}$, we have $\frac{1}{5} \le \delta(\omega) \le \frac{1}{2}$. 
    There are exactly $2$ periodic configurations with density $\frac{1}{2}$ and exactly $10$ periodic configurations with density $\frac{1}{5}$.
\end{theorem}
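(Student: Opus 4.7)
The plan is to establish the density bounds by elementary counting, and then classify the configurations attaining equality at each endpoint separately.

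For the upper bound $\delta(\omega)\le \frac{1}{2}$, I would work in a large box $U_n$. Independence of $\supp(\omega)$ means that for every edge $\{u,v\}$ of $\Z^2$ contained in $U_n$ we have $\omega_u+\omega_v\le 1$; summing over such edges and regrouping by vertex degree gives $4|\omega_{U_n}|\le 2|U_n|+O(|\partial U_n|)$, and the density bound follows after dividing by $|U_n|$. Equality forces every edge to have exactly one occupied endpoint, so $\supp(\omega)$ is a color class of a proper $2$-coloring of $\Z^2$. Since $\Z^2$ is connected and bipartite with the unique even/odd bipartition, the only two such configurations are $\omega^e$ and $\omega^o$.

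For the lower bound $\delta(\omega)\ge \frac{1}{5}$, I would invoke maximality: every site lies in some $\cross_u$ with $u\in\supp(\omega)$, so $|U_n|\le 5|\supp(\omega)\cap U_n^+|$. Equality occurs if and only if the crosses $\{\cross_u\}_{u\in\supp(\omega)}$ partition $\Z^2$, i.e., $\supp(\omega)$ is the center set of a \emph{cross tiling} of $\Z^2$. Thus classifying density-$\frac{1}{5}$ configurations reduces to enumerating cross tilings of $\Z^2$.

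The main obstacle is the cross-tiling classification. My plan is to show that every cross tiling has center set equal to a coset of $L_1=\langle (2,1),(1,-2)\rangle$ or $L_2=\langle (1,2),(2,-1)\rangle$, both index-$5$ sublattices of $\Z^2$. Translating so that the origin is a center, the cross covering $(1,1)$ must have its center in $\{(2,1),(1,2)\}$, since centers at $(1,1),(1,0),(0,1)$ would all overlap the origin cross. In the case that $(2,1)$ is a center, the cross covering $(-1,1)$ is a priori centered at $(-2,1)$ or $(-1,2)$; I would rule out $(-2,1)$ by propagation, noting that it forces $(1,-2)$ to cover $(1,-1)$ (the alternative $(2,-1)$ overlaps the $(2,1)$ cross), after which each of the three candidate centers $(-1,-1)$, $(-2,-1)$, $(-1,-2)$ for $(-1,-1)$ overlaps the $(0,0)$, $(-2,1)$, and $(1,-2)$ crosses respectively, yielding a contradiction. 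Hence the center of $(-1,1)$ is $(-1,2)$, and the corners $(1,-1),(-1,-1)$ are forced by symmetric arguments. The resulting five centers $\{(0,0),(2,1),(-1,2),(-2,-1),(1,-2)\}$ lie in $L_1$, and rerunning the same local forcing at each newly placed center propagates the $L_1$ structure to all of $\Z^2$. The mirror case in which $(1,2)$ is a center yields $L_2$ via the reflection $(x,y)\mapsto(y,x)$. Distinctness of the $10$ cosets then follows because $L_1\ne L_2$ as subgroups, so no $L_1$-coset equals an $L_2$-coset; combined with the two density-$\frac{1}{2}$ configurations this recovers the counts claimed in the theorem.

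The delicate step is the local case analysis ruling out alternative neighboring centers: the contradiction for the branch $(-2,1)$ is not visible from a single immediate overlap but only after one or two further forced placements, so the bookkeeping must be handled carefully. Once this local forcing is pinned down, the rest of the argument is routine translation-invariant induction.
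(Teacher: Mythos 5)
Your proposal is correct and follows essentially the same route as the paper: a double-counting argument for the bounds $\frac15\le\delta(\omega)\le\frac12$ (the paper counts occupied--unoccupied adjacent pairs on the torus, which packages your edge count and your cross-covering count into a single two-sided inequality), followed by a classification of the equality cases via the unique bipartition of $\Z^2$ for density $\frac12$ and via local forcing of cross tilings into the two mirror lattices $\cL_1,\cL_2$ and their five cosets each for density $\frac15$. Your case analysis ruling out the $(-2,1)$ branch is carried out correctly and is in fact more detailed than the paper's ``by inspection.''
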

\begin{proof} 
Both statements about the lower bound are elementary once one realizes that we seek tilings of $\Z^2$ with ``unit'' crosses\footnote{In the language of coding theory, the crosses are unit balls in the $\ell_1$ metric, and tilings correspond to single-error-correcting perfect codes in this metric.}.
To make the connection, let $\omega \in \cX_0^{\text{\rm per}}$ be a periodic configuration, 
defined by the condition $\omega_i  = \omega_{i + v}$ for all $i \in \Z^2$ and some $v = (v_1, v_2)$. This means that there is a rectangular tile with sides $v_1,v_2$ whose shifts fill the plane, and then 
   $$
   \delta(\omega) = \frac{|\omega_{\Z_{v_1 \times v_2}}|}{v_1 v_2}.
   $$
The first step is to use periodicity to identify this tile with the torus $\torus_{v_1,v_2}=\{(i,j) \mid i \in \Z\, (\text {mod\,} v_1), j \in \Z\, (\text {mod\,} v_2)\}$, so that every $0$ in $\omega_{\Z_{v_1 \times v_2}}$ is adjacent to at least one $1$. Next let us count
the number, $N$, of pairs $(u,v)$ in which $u$ and $v$ are adjacent sites in $\torus_{v_1,v_2}$, 
with the property that $\omega_u = 1$ and $\omega_v = 0$. Since every occupied site $u \in \supp(\omega_{\Z_{v_1 \times v_2}})$ is adjacent to 
four unoccupied sites, we have $N = 4  |\omega_{\Z_{v_1 \times v_2}}|$. On the other hand, every unoccupied site $v$ in the torus 
$\torus_{v_1.v_2}$ is adjacent to at least one and at most four occupied sites; so we also have $v_1v_2 - |\omega_{\Z_{v_1 \times 
v_2}}| \le N \le 4 (v_1v_2 - |\omega_{\Z_{v_1 \times v_2}}|)$. Substituting $N$, we obtain
     $$
v_1v_2 - |\omega_{\Z_{v_1 \times v_2}}| \le 4  |\omega_{\Z_{v_1 \times v_2}}| \le 4  (v_1v_2 - |\omega_{\Z_{v_1 \times v_2}}|),
     $$
from which we obtain $\frac15 \le \delta(\omega) \le \frac12$. Moreover, we have $\delta(\omega) = \frac15$ iff equality holds in the first inequality above, i.e., every 
unoccupied site $v$ in the torus $\torus_{v_1,v_2}$ is adjacent to exactly one occupied site. Similarly, we have $\delta(\omega) = \frac12$ iff equality holds in the second inequality, i.e., every unoccupied site $v$ in the torus $\torus_{v_1,v_2}$ is adjacent to exactly four occupied sites. Focusing on the $\delta(\omega) = \frac12$ case first, we see that this holds for a periodic configuration $\omega$ iff every $1$ in $\omega$ is adjacent to four $0$s, and every $0$ is adjacent to four $1$s. It follows that $\omega$ can only be one of the two configurations $\omega^e$ and $\omega^o$. 

For $\delta(\omega) = \frac15$ to hold for a periodic configuration $\omega$, every $1$ must be adjacent to four $0$s, and every $0$ must be adjacent to a single $1$. 
In this case, the crosses centered on occupied sites form a tiling, namely (see Figures~\ref{fig:sparst-states-shifts-a}, \ref{fig:sparse-mirror}):
   $$
    \torus_{v_1,v_2}=\bigsqcup\limits_{i\in \supp(v_1,v_2)} \!\!\cross_i\;.
  $$
Periodic tilings of $\Z^2$ with unit crosses are well known. Let us check that there are exactly 10 distinct tilings of this kind, which form a single equivalence class under $\Z^2$ isometries\footnote{Note that a pair of tilings that can be mapped on each other by a global isometry are still different because they define different Gibbs measures.}.  W.l.o.g. set $\omega_{(0,0)}= 1$, placing a cross $\cross_{(0,0)}$ at the origin. By symmetry, there are two ways to place another cross to form a tiling, namely with center at $(2,1)$ or $(1,2)$. Say we choose $(2,1)$, then by inspection, this forces crosses $\cross_{(1,3)}$ and
$\cross_{(-1,2)}$,
and this pattern extends periodically; see Fig.~\ref{fig:sparst-states-shifts-a}. Overall this tiling forms a sublattice $\cL_1$ with the basis $(2,1),(-1,2)$ and fundamental volume 5. Choosing $(1,2)$ in the first step leads to its reflection with respect to the vertical axis, i.e., sublattice $\cL_2$
with the basis $(1,2), (-2,1)$. These two lattices/tilings are shown in Fig.~\ref{fig:sparse-mirror}. The remaining 8 configurations arise as $\Z^2$-shifts of these sublattices. For instance, in
the case of $\cL_1$, the shifts are by each of the vectors $(1,0),(2,0), (-1,-1)$, and $(2,-1)$, as illustrated in  Fig.~\ref{fig:sparst-states-shifts-b}.
\end{proof}

 \begin{figure}[ht]
    \centering
    \subcaptionbox{A periodic configuration of density $\frac15$. 
    The crosses, with centers shown as $\bullet$, constitute a tiling of the discrete torus 
    $\torus_{5\times 5}$. Together with its reflection, Fig.~\ref{fig:sparse-mirror}, 
    these tilings yield the 10 ground states of the system. \label{fig:sparst-states-shifts-a}}[0.48\textwidth]
    {\includegraphics[width=0.25\textwidth]{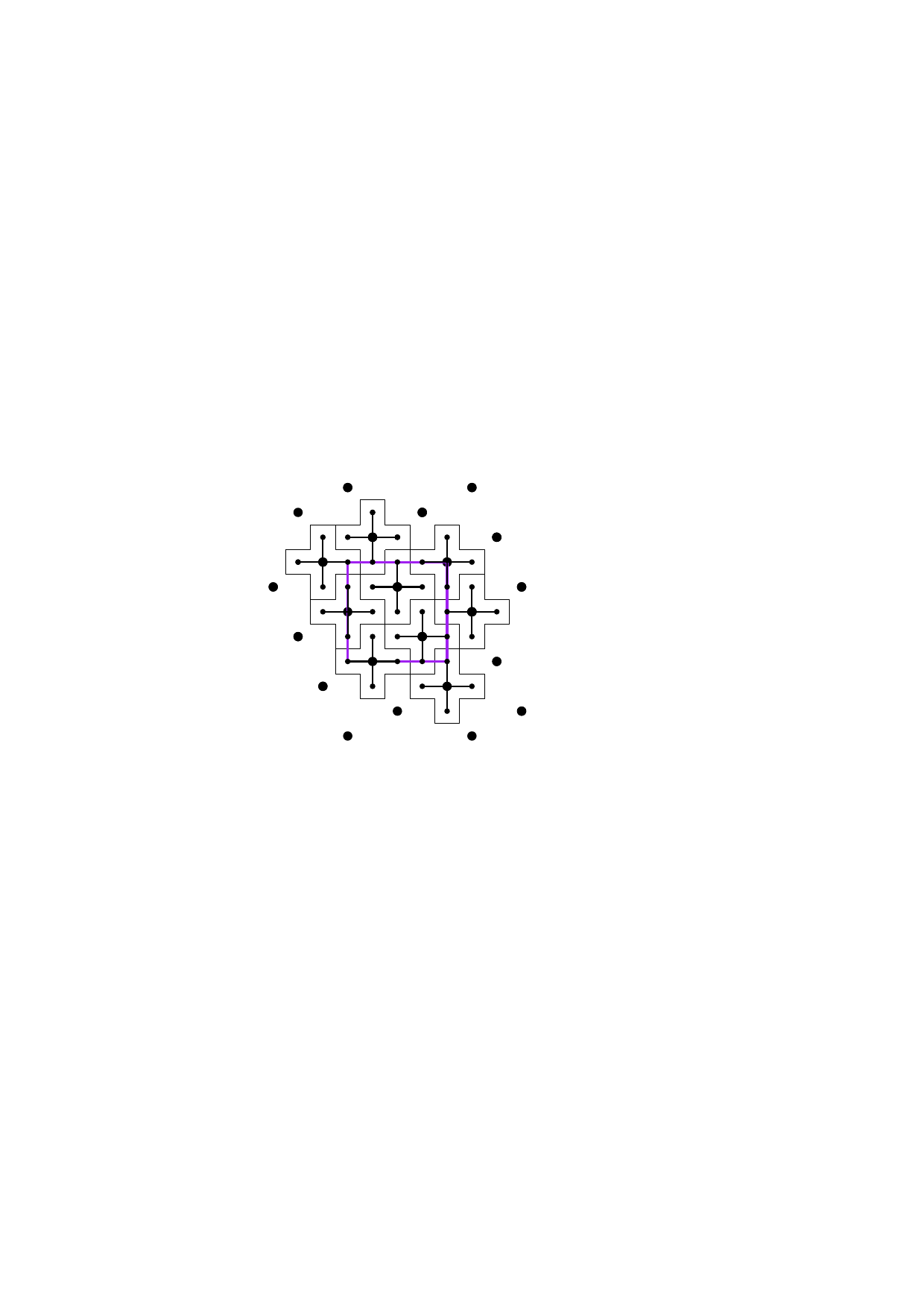}}
   \hspace*{0in}
    \subcaptionbox{Sublattice $\cL_1$ with the origin shown as $\textcolor{red}{\bullet}$ 
    and other lattice points as $\bullet$. Shifting the lattice by $(0,2)$, we obtain 
    another ground state with occupied sites shown as $\circ$. 
    The other 3 states in this group are obtained as shifts of $\cL_1$ by 
    $(1,1),(0,1)$, and $(1,2)$.  \label{fig:sparst-states-shifts-b}}[0.48\textwidth]
    {\includegraphics[width=0.25\textwidth]{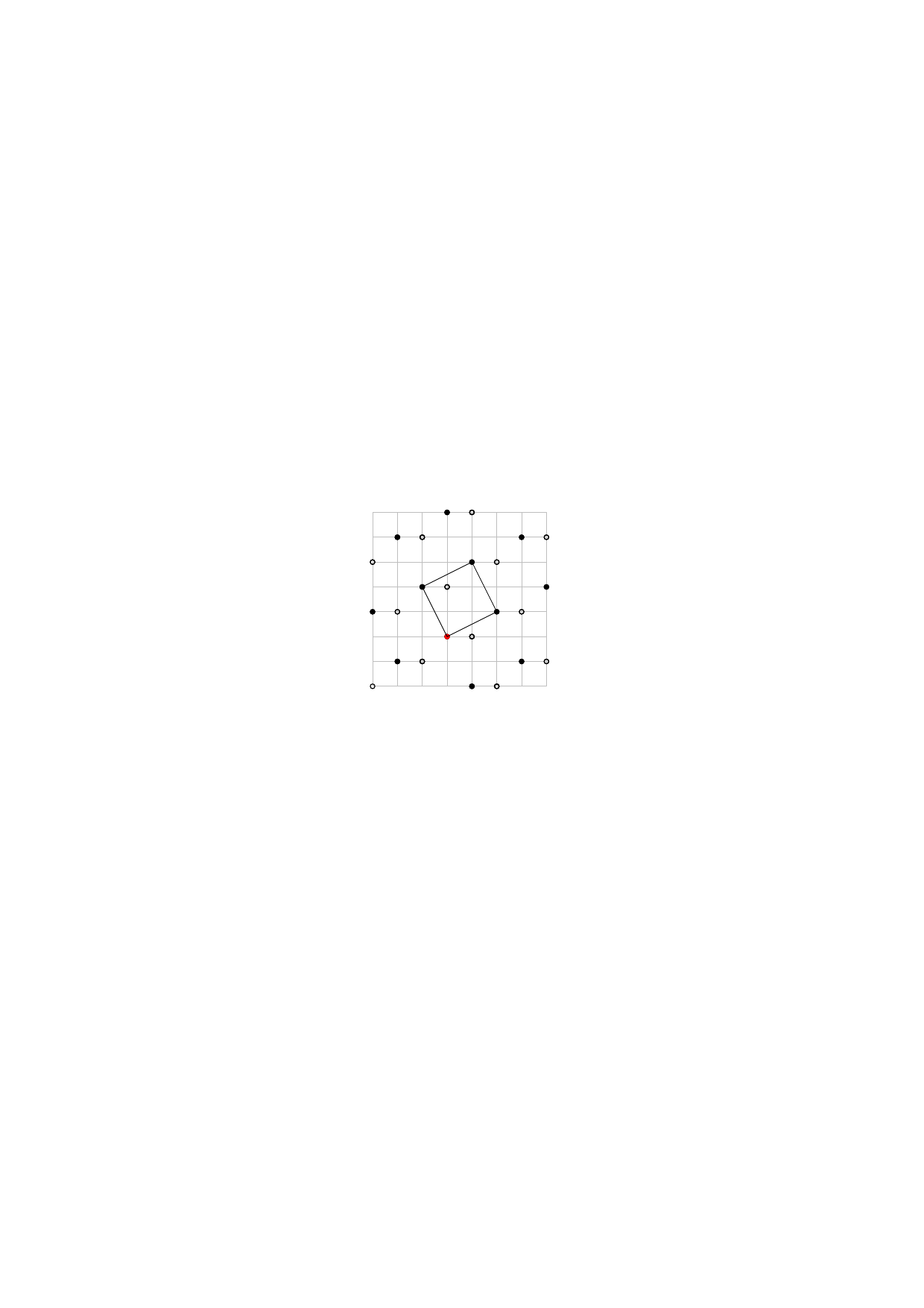}}
      \caption{Construction of periodic ground states.}
    \label{fig:sparse-proof}
\end{figure}
\vspace*{-0.1in}
\begin{figure}[ht]
    \centering
    \includegraphics[width=0.45\linewidth]{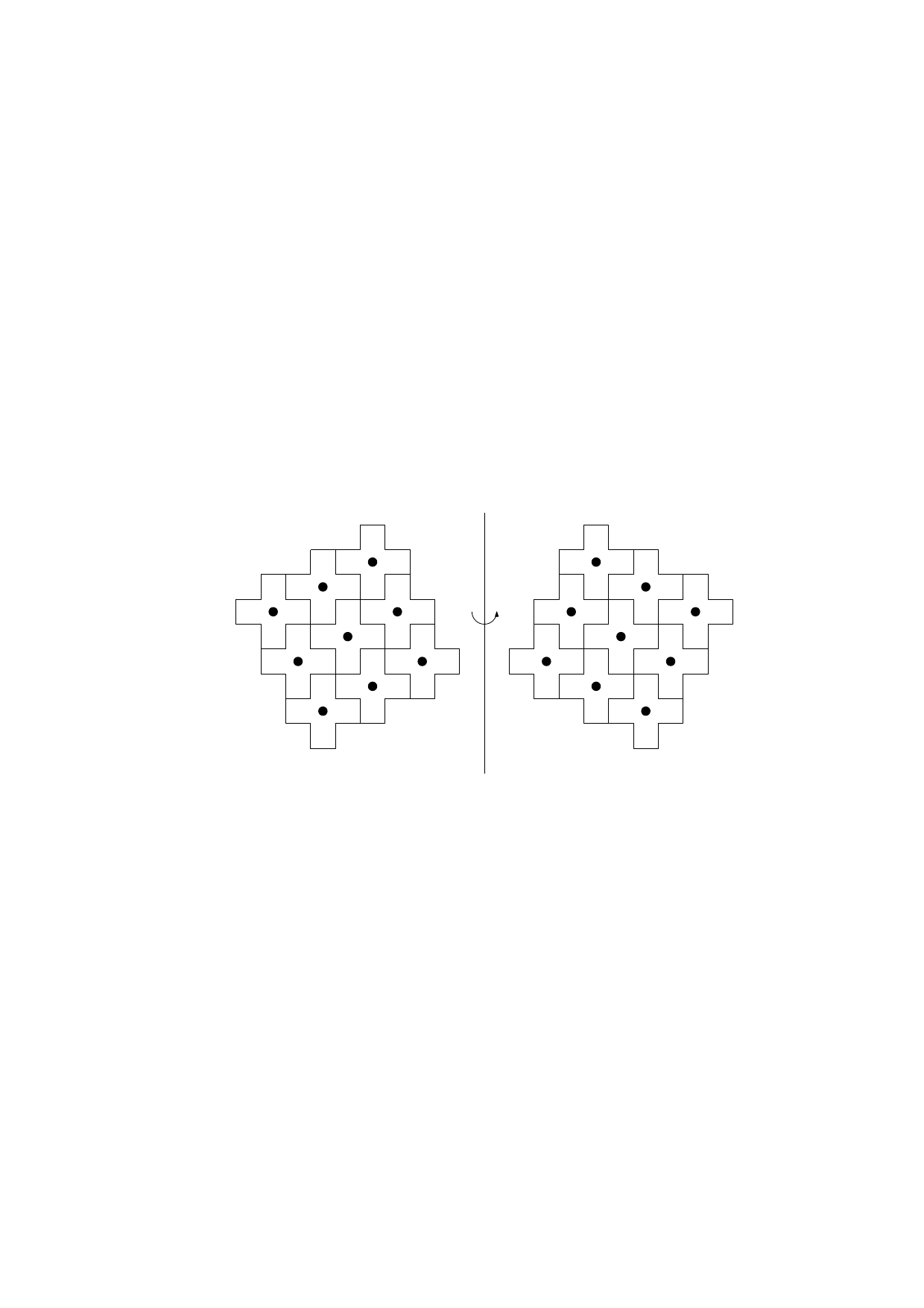}
    \caption{Lattice tilings $\cL_1$ and $\cL_2$.}
    \label{fig:sparse-mirror}
\end{figure}

\subsection{The Peierls condition \texorpdfstring{($\lambda < 1$)}{lambda<1}}
We have shown that when $\lambda < 1$, there are exactly 10 periodic ground states. 
One naturally expects that for sufficiently small $\lambda$, typical configurations are local perturbations of these ground states. 
The Pirogov–Sinai Theory asserts precisely that when $\lambda$ is small enough, all extremal Gibbs measures arise as thermodynamic limits of finite-volume Gibbs measures with boundary conditions given by (stable) periodic ground states. 
Moreover, configurations sampled from these extremal measures typically differ from the corresponding ground state only by local perturbations.

To apply the Pirogov–Sinai theory, we must verify the Peierls condition. 
Loosely speaking, this condition controls the increase of energy caused by a perturbation of a ground state, and is satisfied if this increase is proportional to the size of the perturbation. This condition can be rigorously expressed using the contour formalism.

Let $\cQ$ be the set of all $5 \times 5$ patterns (tiles) that occur in exactly recoverable configurations. 
We may then reinterpret configurations in $\cX_0$ using a block-spin representation: partition $\mathbb{Z}^2$ into disjoint $5 \times 5$ blocks of the form $\mathbb{Z}_{5 \times 5} + 5t$, where $t \in \Z^2$, and encode each block by an element of $\cQ$.

Let $\mathcal{G}$ denote the set of 10 periodic ground states for $\lambda < 1$, and let $\cQ_{\text p}$ be the set of $5 \times 5$ patterns arising from periodic ground states. Then, in this block-spin representation, each ground state corresponds to a constant configuration $Q^{\mathbb{Z}^2}$ for some $Q \in \cQ_{\text p}$.

Let $\omega \in \cX_0$ and $\bar{\omega}$ be the corresponding block-spin representation.
Denote by $\cQ(l,m) := \Z_{5 \times 5} + 5(k,m)$ the $5 \times 5$ box centered at $(5k,5m)$, and let $\bar{\omega}_{l,m}$ be the restriction of $\omega$ to $\cQ(l,m)$.
Let $Q \in \cQ_{\text p}$.
We say that the block $\cQ(l,m)$ is \emph{$Q$-correct} in $\omega$ if $\bar{\omega}_{l + i,m + j} = Q$ for all $i, j \in \{0,\pm 1\}$. 
In other words, a correct block at $(l,m)$  matches the pattern $Q$ together with all of its 8 neighboring blocks.
We call a block {\em incorrect} if it is not $Q$-correct for any $Q \in \cQ_{\text p}$. 

We define a {\em contour} in $\omega$ as a maximal connected component $\Gamma$ of incorrect blocks.
The energy associated with a contour $\Gamma$ is determined by the number of occupied vertices that $\Gamma$ encloses. 
The Hamiltonian restricted to $\Gamma$ is
   $$
    \cH_{\Gamma}(\omega) = - |\omega_\Gamma| \ln \lambda,
   $$
where $|\omega_\Gamma|$ denotes the number of occupied sites within $\Gamma$.
Let $\eta$ be one of the periodic ground states. The relative Hamiltonian of $\omega$ with respect to $\eta$, restricted to $\Gamma$, is given by 
   $
    \cH_\Gamma(\omega \mid \eta) =  (|\eta_\gamma| - |\omega_\Gamma|) \ln \lambda.
   $
By definition of the ground state, $\cH_\Gamma(\omega \mid \eta)\ge 0$. The Peierls condition, moreover, requires that
   $$
\cH_\Gamma(\omega \mid \eta) \ge \rho |\Gamma|
   $$
for some constant $\rho > 0$. Using the expression for $\cH$ and recalling that $\lambda<1$, we can write this explicitly as
\begin{equation}\label{eq:peierls}
    |\omega_\Gamma| - |\eta_\Gamma| \ge \alpha |\Gamma|,
\end{equation}
where $\alpha := - \frac{\rho}{\ln \lambda} > 0$.

\begin{remark}
    There is a technical issue concerning the way of verifying the Peierls condition. Often $\Gamma$ is defined as the ``thick'' boundary of all the incorrect blocks within $\cQ(k,m)$, i.e., the union of all contours in the
    region, see e.g., \cite[Ch.7]{friedliLattice2018}. Another possibility appearing in the literature  (\cite[Sec.1.5]{zahradnik1998short}, \cite{mazel2019high}) is verifying condition \eqref{eq:peierls} for individual contours, and then aggregating these inequalities for the final result. We will follow the second option without further mention. \hfill$\triangleleft$
\end{remark}

One approach to analyzing the number of occupied vertices in configurations from  $\cX_0$ is through the \emph{Delaunay triangulation} defined by their support. Let $\omega \in \cX_0$.
To every occupied site, $i$,  there corresponds a polygon formed of the points of $\R^2$
that are closer to $i$ than to any other occupied site of $\omega$. The collection of these
polygons forms a Voronoi partition of $\R^2$ defined by $\omega$. The Delaunay triangulation defined by $\omega$ is a triangulation of $\reals^2$ constructed as the dual graph of the Voronoi partition defined by $\omega$; namely, the circumcenters of the triangles are the vertices of the Voronoi diagram. 

Note that for a given $\omega$, there may exist several different triangulations; for instance, in Fig.~\ref{fig: Delaunay}, the vertices can be grouped into triples in more than one way. At the same time, they all share a defining property, namely, the circumcircles of the triangles do not contain any occupied sites of $\omega$ other than the vertices of their triangles.

If $\omega$ is one of the periodic ground states, then
every triangle has side lengths $\sqrt{5}, \sqrt{5}, \sqrt{10}$ and area $\frac{5}{2}$, as is evident from 
Fig.~\ref{fig: Delaunay}.

\begin{figure}[ht!]
\begin{center}
\includegraphics[width=.4\linewidth]{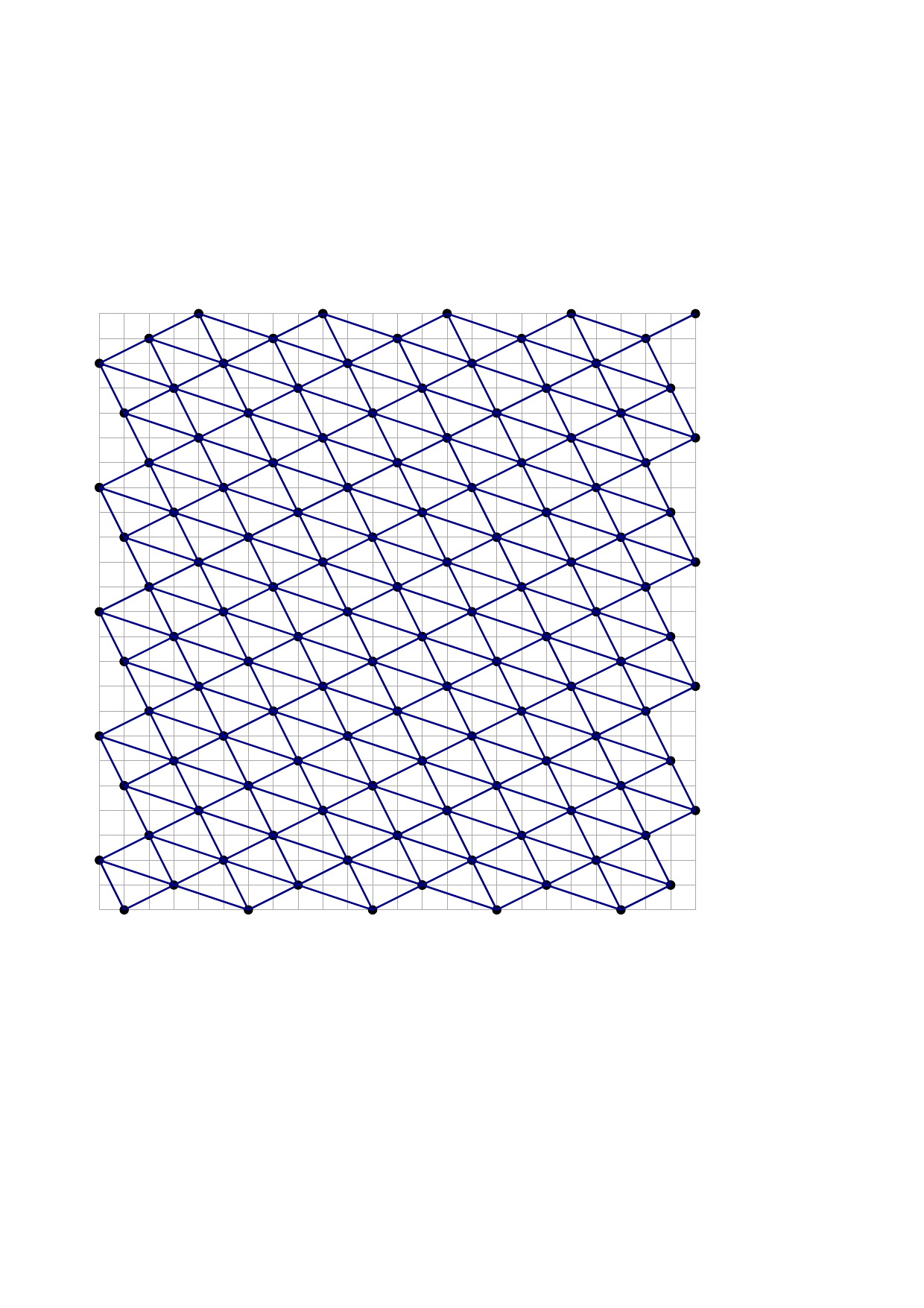}
\caption{A Delaunay triangulation for the ground state $\cL_1$.}\label{fig: Delaunay}
\end{center}
\end{figure}

Call a triangle {\em regular} if its side lengths are $\{\sqrt{5}, \sqrt{5}, \sqrt{10}\}$ and call it 
{\em defective} otherwise.
An incorrect block is said to be t-defective if it intersects at least one defective triangle (even on a single vertex). 
An incorrect block is called n-defective if it is not t-defective: these blocks are neighbors of a t-defective block, which themselves take values from $\cQ_{\text p}$. 

The following proposition is proved by examining all possible triangles in a Delaunay triangulation of an MIS. 
\begin{lemma}\label{prop: 5/2}
   For $\omega\in\cX_0$, the area of any defective triangle in the Delaunay triangulation defined by $\omega$ is at most 2.
\end{lemma}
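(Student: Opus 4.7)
The plan is a finite case analysis that combines the Delaunay condition with the MIS property. Let $T$ be a triangle in the Delaunay triangulation defined by some $\omega\in\cX_0$, with vertices $a,b,c$, and let $D(T)$ denote its open circumdisk. Since $\omega$ is an independent set, $a$, $b$, $c$ are pairwise non-$\Z^2$-adjacent, so every squared side length of $T$ lies in the set $\{p^2+q^2:p,q\in\Z_{\ge 0},\,p+q\ge 2\}=\{4,5,8,9,10,13,16,17,18,20,\dots\}$. The Delaunay property asserts that $D(T)$ contains no occupied site other than $a,b,c$.

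Next, I would use the maximality of $\supp(\omega)$ to bound the circumradius of $T$. The key observation is that no $3\times 3$ lattice block in $\Z^2$ can be entirely unoccupied in an MIS, since the center of such a block would have all four of its $\Z^2$-neighbors unoccupied, contradicting maximality. Consequently, the interior of $D(T)$ cannot contain a full $3\times 3$ lattice block. This gives a concrete upper bound on the circumradius (of order $\sqrt{2}$), and hence restricts the possible side lengths of $T$. Up to the isometry group of $\Z^2$, only a small finite list of candidate triangles has side lengths compatible with both this bound and the MIS constraint above.

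For each such candidate $T$ whose side-length multiset differs from $\{\sqrt 5,\sqrt 5,\sqrt{10}\}$, I would conclude by one of two checks: either (a) compute $\mathrm{area}(T)$ directly and verify $\mathrm{area}(T)\le 2$, or (b) exhibit a lattice vertex $v\in D(T)\setminus\{a,b,c\}$ whose four $\Z^2$-neighbors also lie in the closed disk $\overline{D(T)}$, so that no neighbor of $v$ can be occupied, contradicting the MIS condition at $v$. For instance, the triangle with vertices $(0,0),(2,0),(0,2)$ has sides $2,2,2\sqrt 2$ and area exactly $2$, saturating the bound; on the other hand, the triangle $(0,0),(2,0),(0,3)$ has area $3$, but its circumdisk contains $(1,1)$ together with all four of its $\Z^2$-neighbors, which is impossible in any MIS and so rules this shape out.

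The main obstacle is the bookkeeping in the enumeration, together with the choice, for each offending shape, of a concrete obstructing lattice vertex whose neighborhood lies inside $\overline{D(T)}$. Because the admissible circumradii and squared side lengths are drawn from short lists, this casework is finite; verifying $\mathrm{area}(T)\le 2$ on each surviving candidate then yields the lemma.
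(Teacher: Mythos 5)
Your proposal follows essentially the same route as the paper's proof: use maximality to bound the circumradius of any Delaunay triangle (the paper uses the covering-radius bound $\sqrt{5/2}\approx 1.58$ for an MIS, which is sharper than the $\approx 3\sqrt{2}/2$ one gets from forbidding an unoccupied $3\times 3$ block at an arbitrary disk center, and hence yields a smaller enumeration), then finitely enumerate the candidate triangles and dispose of each non-regular one either by computing its area or by showing its circumdisk is infeasible. Two small caveats: in your check (b) the obstructing vertex's four neighbors must lie in the \emph{open} circumdisk and avoid $\{a,b,c\}$ (a neighbor on the circumcircle could be a triangle vertex and hence occupied, so the closed-disk version does not give the contradiction); and the deferred casework is the actual substance of the paper's argument, which organizes candidates by the $\ell_1$-distance between vertex pairs and tabulates area and circumradius for each.
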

\begin{proof}
We will make use of the following two claims:
\begin{itemize}
    \item The area of a triangle with integer vertices is an integer or a half-integer.
  \item Any circle in $\R^2$ of radius $\bar\rho:=\sqrt\frac 52\approx1.581$ in $\R^2$ contains a point in $\omega$.
\end{itemize} 
The first claim follows by Pick's theorem, which asserts that the area equals the number of interior points plus half the number of boundary points minus 1.
For the second claim, let $z\in\R^2$. The nearest integer point, $p$, is no further than $\sqrt 2/2$. This point is either in $\omega$ or distance 1 to an occupied vertex $i\in\omega$. Notice that the latter case gives the maximum value of $\rho(\omega)$--the covering radius of the configuration. Indeed, applying the cosine law for the triangle $(z,p,i)$, we find that $\rho(\omega)\le \bar\rho$.
For instance, this is the value of $\rho$ for $\omega$ in Fig.~\ref{fig:sparst-states-shifts-a}.

We will examine all the possible triangles with vertices in $\omega$. By shift invariance of an MIS, it suffices to examine all the possible triangles with one vertex at (0,0), since if there is an MIS with a triangle of some shape elsewhere, its shift is also an MIS.
Therefore, all the triangles will be assumed to have one of their vertices at the origin. 

Let us note two useful properties of the circumcircles. First, observe that a triangulation cannot include a triangle with circumradius greater than $\bar\rho$. Indeed, in this case, the circumcircle of the triangle would have to contain points of
$\omega$ distinct from the vertices of the triangle, contradicting the defining property of the triangulation. Next, the circumradius of a triangle with positive area is at least half the length of each side. 

Suppose the triangle contains a pair of vertices, $(0,0)$ and $(x,y)$, at $\ell_1$ distance $5$ (that one of the vertices is taken as $(0,0)$ is again justified by shift invariance). By symmetry, the only possibilities are $(x,y)\in\{(0,5),(4,1),(2,3)\}$, resulting in each case in a side of the triangle of length $> 2\bar\rho$, so that the circumradius is $> \bar\rho$. Therefore, these
triangles cannot appear in the triangulation. The same argument also rules out triangles with vertices at $\ell_1$ distances greater than 5.

Now assume that the $\ell_1$ distance between $(0,0)$ and $(x,y)$ is 4, then it suffices to assume that $(x,y)\in \{(0,4),(1,3),(2,2)\}$.
The first option is ruled out since $2\bar\rho< 4$, so we are left to examine the other two options. If $(x,y)=(1,3)$ and the
third vertex is $(u,v)$, then the area of the triangle is $\frac 12|v-3u|$, where $u+v\le 4$ and $u\ge 0,v\ge 0$. Moreover, the circumradius of the triangle equals\footnote{{The circumradius of a triangle with side lengths $a, b, c$ and area $A$ is    $
   \frac{abc}{4A}$.}}
   $$\sqrt{\frac{5}{2}} \cdot \frac{\sqrt{(u^2+v^2)((u-1)^2+(v-3)^2)}}{|v-3u|}.
   $$
This yields the
following table of possible points below, which also records the area and the ratio of the circumradius to $\bar{\rho}$ for each of the resulting triangles:

\vspace*{.1in}   \begin{tabular}{ccccccccccccc}
$u$     &0&0&0&1&2&3&1&2&3&1&2\\
$v$     &1&2&3&0&0&0&1&1&1&2&2\\
Area    &0.5&1&1.5&1.5&3&4.5&1&2.5&4&1&2 \\
$\text{Circumradius} / \bar{\rho}$ & $\sqrt{5}$ & $\sqrt{2}$ & $1$ & $1$ & $\frac{\sqrt{10}}{3}$ & $\frac{\sqrt{13}}{3}$ & $\sqrt{2}$ & $1$ & $\frac{\sqrt{5}}{2}$ & $\sqrt{5}$ & $1$ \\
   \end{tabular}
   
\medskip
\noindent The triangles with circumradius $> \bar{\rho} \ge \rho(\omega)$ are infeasible, i.e, cannot arise in the Delaunay triangulation. This rules out all triangles with area $> 2.5$. The triangle with area 2.5 is regular.

If $(x,y)=(2,2)$, then the area of the triangle is $|v-u|$ with $u+v\le 4$ and $u,v\ge 0$, and the 2 possible options $(u,v)\in\{(0,3), (3,0)\}$ that yield area $\geq 2.5$, give rise to infeasible triangles, since the circumradius of these triangles is $\frac32 \bar{\rho}$.

The case $x+y=3$ is exhausted similarly. There are two possible options, $(x,y)\in\{(0,3),(1,2)\}$, and the area 
equals $\frac 12|xv-yu|$. By inspection, all feasible triangles have area at most 2.

Finally, the case of $\ell_1$ distance 2 is addressed by a direct inspection of all triangles. The proof is complete.
 \end{proof}

\vspace*{.1in} 
Let $\omega\in\Omega$ and let $\torus_k$ be a $5k\times 5k$ torus in $\Z^2$ (for the moment, it does not matter that the sides are multiples of 5). Denote by $\sT_\omega(\torus_k)$ a Delaunay triangulation of $\torus_k$ defined by $\omega$.
\begin{lemma} For any $\omega\in \Omega$,
\begin{enumerate}
    \item the triangles in $\sT_\omega(\torus_k)$ form a tessellation of $\torus_k$;
    \item  the number of triangles $|\sT_\omega(\torus_k)|=2|\omega_{\torus_k}|$.
 \end{enumerate}
 \end{lemma}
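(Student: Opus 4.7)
The plan is to treat part (1) as a structural fact about Delaunay triangulations and to derive part (2) from Euler's formula on the torus. No elaborate combinatorics should be needed.

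For part (1), I would invoke the defining empty-circumcircle property of a Delaunay triangulation. Two distinct Delaunay triangles necessarily have disjoint interiors, since any point in the common interior would witness a violation of the empty-circumcircle condition for at least one of them. For the covering property, the issue specific to the torus (as opposed to $\R^2$, where one only tessellates the convex hull) is ruling out gaps. This follows from the fact, already used in the proof of Lemma~\ref{prop: 5/2}, that any MIS has covering radius bounded by $\sqrt{5/2}$: every point of $\torus_k$ lies within that distance from some vertex of $\omega_{\torus_k}$, which forces it into the circumcircle of some triple of nearby occupied sites, and hence into a Delaunay triangle. Combined with interior-disjointness, this shows that $\sT_\omega(\torus_k)$ tessellates $\torus_k$.

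For part (2), I would apply Euler's formula. Let $V = |\omega_{\torus_k}|$, let $E$ be the number of edges and let $F = |\sT_\omega(\torus_k)|$. Since the torus has Euler characteristic $0$,
$$
V - E + F = 0.
$$
Because every face is a triangle and every edge is shared by exactly two faces, we get the incidence relation $3F = 2E$, i.e.\ $E = 3F/2$. Substituting,
$$
V - \tfrac{3F}{2} + F = V - \tfrac{F}{2} = 0,
$$
so $F = 2V = 2|\omega_{\torus_k}|$, as claimed.

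The only step that requires any genuine care is the torus version of part (1): the planar statement is classical, but one must ensure that the periodic point set $\omega \cap \torus_k$ is dense enough to exclude uncovered regions. The covering-radius bound recalled above makes this routine, so I expect no real obstacle, only a brief verification.
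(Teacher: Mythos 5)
Your argument is correct, but part (2) is proved by a genuinely different route than the paper's. The paper counts area: it places a disk of area $1$ around each occupied site (these are pairwise disjoint because occupied sites of an independent set are at distance at least $\sqrt 2$), observes that each triangle of the tessellation covers exactly three sectors whose angles sum to $\pi$ and hence exactly $1/2$ of total disk area, and concludes $|\sT_\omega(\torus_k)|\cdot\tfrac12 = |\omega_{\torus_k}|\cdot 1$ since the disks are fully covered by the triangles. You instead use $V-E+F=0$ for the torus together with the incidence relation $3F=2E$, which gives $F=2V$ immediately. Both are valid; the paper's argument is purely local and metric (it needs only the tessellation property and the minimum-distance bound, no topology), while yours is shorter but leans on the Euler characteristic and implicitly on the fact that the Delaunay triangles form a genuine cell decomposition of $\torus_k$ (faces that are embedded disks, each edge on exactly two faces) --- which your part (1), via the covering-radius bound $\sqrt{5/2}$ and the smallness of the circumradii relative to the torus, does justify. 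Your treatment of part (1) is in fact more detailed than the paper's, which dismisses it as following ``by definition.'' One cosmetic caveat shared with the paper: although the lemma is stated for $\omega\in\Omega$, both proofs implicitly use that $\omega$ is (at least) a sufficiently dense independent set, e.g.\ $\omega\in\cX_0$, so that the minimum-distance and covering-radius bounds apply.
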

\begin{proof}
    The first part follows by definition. The second claim is justified as follows. Place a disk of area 1 around each occupied site; since the distance between the sites is at least $\sqrt 2$, the disks do not overlap. Each triangle intersects exactly three such disks---one for each of its vertices. Moreover, since the sum of the interior angles is $\pi,$ the sum of the areas of the three sectors cut out by the triangle is $1/2$. Rephrasing, the area within the disks covered by each triangle is exactly 1/2. 
     On the other hand, every disk is fully covered by the triangles, so the count of triangles is twice the number of disks, which is also the number of occupied sites.
\end{proof}

The specific version of \eqref{eq:peierls} that we establish appears next.
\begin{theorem}\label{thm: Peierls} Let $\omega\in \cX_0$, let $\eta$ be a periodic ground state, and let $\Gamma$ be the contour defined above. Then
  $$
  |\omega_\Gamma|-|\eta_\Gamma|\ge \max\Big(1, \frac1{270}|\Gamma|\Big).
  $$
\end{theorem}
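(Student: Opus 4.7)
The plan is to combine the Delaunay-triangulation area accounting of Lemma~\ref{prop: 5/2} with a combinatorial bound relating the number of defective triangles to $|\Gamma|$. Let $T_r$ and $T_d$ denote the numbers of regular and defective triangles of the Delaunay triangulation of $\omega$ carried by $\Gamma$, and $A_d$ the total area of the defective ones. Combining the triangle-to-vertex identity $2|\omega_\Gamma|=T_r+T_d$ with the area inventory $\tfrac{5}{2}T_r+A_d=25|\Gamma|$ (each $5\times 5$ block has area $25$, and the $O(|\partial\Gamma|)$ boundary correction is absorbed by inflating $\Gamma$ by one block layer), together with $A_d\le 2T_d$ from Lemma~\ref{prop: 5/2}, gives $|\omega_\Gamma|\ge 5|\Gamma|+T_d/10$. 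Since every periodic ground state has density exactly $1/5$ (Theorem~\ref{prop: density}), $|\eta_\Gamma|=5|\Gamma|$, so
\begin{equation*}
    |\omega_\Gamma|-|\eta_\Gamma|\;\ge\;\tfrac{T_d}{10}.
\end{equation*}

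For the integrality lower bound $\ge 1$, by integrality of the left-hand side it suffices to show $T_d\ge 1$. If $\Gamma$ contains a t-defective block this is immediate. Otherwise every block of $\Gamma$ is n-defective and therefore carries a pattern in $\cQ_{\text p}$; since $\Gamma$ is a connected component of \emph{incorrect} blocks, some adjacent pair in $\Gamma$ must carry different $\cQ_{\text p}$-patterns. A direct case analysis using the list of ten ground states (Theorem~\ref{prop: density}) would then show that the seam between two distinct $\cQ_{\text p}$-patterns cannot be triangulated solely by regular $(\sqrt{5},\sqrt{5},\sqrt{10})$-triangles, forcing a defective triangle and hence a t-defective block, contradicting the assumption.

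For the linear bound $\ge |\Gamma|/270$, write $|\Gamma|=N_t+N_n$, where $N_t$ and $N_n$ are the numbers of t- and n-defective blocks in $\Gamma$. By the characterisation of n-defective blocks recorded in the text, each is a neighbour of some t-defective block; since a t-defective block has at most $8$ block-neighbours, $N_n\le 8N_t$, hence $N_t\ge |\Gamma|/9$. Next, any defective triangle has circumradius at most $\bar\rho=\sqrt{5/2}<1.6$, so its diameter is strictly below $5$ and its three integer vertices lie in at most three distinct $5\times 5$ blocks; treating the pathological case where a triangle only grazes a fourth block through a corner via a tie-breaking convention then gives $N_t\le 3T_d$. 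Combining yields $T_d\ge N_t/3\ge |\Gamma|/27$, which together with the display above produces $|\omega_\Gamma|-|\eta_\Gamma|\ge |\Gamma|/270$.

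The main obstacle is the interface claim that underlies both the $T_d\ge 1$ argument and the $N_n\le 8N_t$ bound: that two distinct patterns from $\cQ_{\text p}$ cannot meet along adjacent blocks without forcing a defective triangle at the seam. Establishing this rigorously requires inspecting all ten ground states from Theorem~\ref{prop: density} together with the admissible triangle inventory compiled in the proof of Lemma~\ref{prop: 5/2}; this is the combinatorial heart of the Peierls estimate and where the Pirogov--Sinai machinery draws on the full structure of the low-activity ground states. The sharp constant $3$ in $N_t\le 3T_d$, rather than the easier $4$, likewise depends on careful treatment of the block-corner degeneracy, while the Step-1 boundary correction is routine and does not affect the linear-order bound.
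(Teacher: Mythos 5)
Your proposal follows essentially the same route as the paper: the same area inventory based on Lemma~\ref{prop: 5/2}, the identity equating the triangle count with twice the number of occupied sites, the bounds $N_n\le 8N_t$ and $N_t\le 3T_d$, and the resulting constant $1/270$, with an integrality/parity observation supplying the $\ge 1$ part. The only notable difference is that the paper sidesteps your boundary-correction issue by extending $\omega_\Gamma$ to a configuration on a torus using ground-state tiles outside $\Gamma$, which makes the triangle-counting identity exact; the interface claim you flag as the remaining obstacle is likewise asserted rather than verified in detail in the paper, being built into the definition of n-defective blocks.
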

\begin{proof}
Let $\omega \in \cX_0$ and let $\Gamma$ be a contour formed of incorrect blocks. 
Observe that the blocks in the immediate neighborhood of $\Gamma$ are $Q$-correct for some values of $Q \in \cQ_{\text p}$.
Recall that the $Q$-constant block-spin configuration is a periodic ground state. Therefore, $\omega_\Gamma$ can be extended using $Q$-tiles, which gives us a configuration  $\tau$ on $\torus_k$ for a  sufficiently large $k$. 
In other words, we define a configuration $\tau$ on $\torus_k$ such that
$\tau_\Gamma=\omega_\Gamma$ and $\tau_{\Gamma^c}=\eta'_{\Gamma^c}$ for some other periodic ground 
state of our model\footnote{If $\Gamma$ is not
simply connected, it may happen that $\omega_{\Gamma^c}$ agrees with different periodic ground states in different connected
components of $\torus_k$ defined by $\Gamma$. This will not matter for the argument below since we rely on the number of occupied sites rather than
on the ground states themselves.}. Importantly, $|\tau_{\Gamma^c}|=|\eta_{\Gamma^c}|$, where $\Gamma^c=\torus_k\backslash\Gamma$.
    
 By Lemma~\ref{prop: 5/2}, any defective triangle has area at most 2. If we replace one or more triangles of area 5/2 with defective (smaller) ones, then to tile $\torus_k$ we will use more triangles than the number of triangles in 
 $\sT_\eta(\torus_k)$ (this number equals 
$\frac{|\torus_k|}{2.5}=10k^2$, where $|\cdot|$ denotes the area).  
The total number of triangles in a triangulation is even (it is twice the number of occupied sites), so the total count of triangles increases by at least $2$, and the number of occupied sites increases by at least $1$. 

The size of $\sT_\tau(\torus_k)$ can be estimated as follows. Denote by $R$ a generic triangle and let 
$N_0(\tau)$ and $N_d(\tau)$ be the number of regular and defective triangles in $\sT_\tau(\torus_k)$, respectively. We have
   \begin{align*}
   |\torus_k|&=\sum_{\text{defective } R} |R| + 2.5 N_0(\tau) \\
   & \le \, 2 N_d(\tau) + 2.5 N_0(\tau)
   \end{align*}
using the fact that the area of a defective triangle is at most 2, proved in Lemma~\ref{prop: 5/2}. Re-arranging this inequality, we obtain
   $$
    N_0(\tau) + N_d(\tau) \ge \frac{|\torus_k|}{2.5} + \frac15 N_d(\tau).
   $$
Thus, the triangulation $\sT_\tau$ increases the count of triangles over $\sT_\eta$, namely, 
    $$
    |\sT_\tau(\torus_k)|\ge |\sT_\eta(\torus_k)|+ \frac15 N_d(\tau).
    $$
Since each $t$-defective incorrect block is surrounded by at most $8$ $n$-defective incorrect blocks, all of which enter
$\Gamma,$ and each defective triangle
intersects at most 3 incorrect blocks, we obtain
   $$
    3 N_d(\tau) \ge \sharp\{\text{$t$-defective blocks}\} \ge \frac{|\Gamma|}{9}.
   $$
Note that $\omega_\Gamma=\tau_\Gamma$ (since the only changes are outside
$\Gamma$) and $|\omega_{\Gamma^c}|\ge |\tau_{\Gamma^c}|$ (since $\omega_{\Gamma^c}$ may span defective triangles, and
$\tau_{\Gamma^c}$ gives rise only to regular ones). We continue as follows:
  \begin{align*}
      |\omega_{\torus_k}|&=|\omega_\Gamma|+|\omega_{\Gamma^c}|\ge |\tau_\Gamma|+|\tau_{\Gamma^c}|\\
      &=\frac12|\sT_\tau(\torus_k)|\ge\frac12\Big(|\sT_\eta(\torus_k)|+\frac{N_d(\tau)}5\Big)\\
      &\ge \frac12\Big(|\sT_\eta(\torus_k)|+\frac{|\Gamma|}{135}\Big)\\
      &=|\eta_{\torus_k}|+\frac{|\Gamma|}{270}. 
  \end{align*}
Since we argued earlier that $|\omega_{\torus_k}|\ge |\eta_{\torus_k}|+1$, the proof is complete.
\end{proof}

The results obtained so far, namely, the enumeration of periodic ground states (Theorem~\ref{prop: density}) and verification of the Peierls condition, prepare the ground for the use of the Pirogov-Sinai theory, which implies several claims about extremal Gibbs measures and related concepts. They are summarized in the next theorem.
%%%%%
\begin{theorem} The maximal hard-core model at low activity $\lambda <1$ has 10 periodic ground states, obtained from each other by global $\Z^2$ isometries. For sufficiently small $\lambda$, each ground state $\eta$ gives rise to an extremal periodic 
Gibbs measure, $\mu_\eta$, and all such measures are generated by the periodic ground states.
The measures $\mu_\eta$ for different periodic ground states are mutually singular. For $\lambda\to0$ and any $\eta$, the measure
$\mu_\eta$ converges weakly to $\delta_\eta$.
\end{theorem}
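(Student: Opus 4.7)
The first assertion is immediate from Theorem~\ref{prop: density}: for $\lambda<1$ the energy density $e(\omega)=-\delta(\omega)\ln\lambda$ is minimized by minimizing $\delta(\omega)$, and Theorem~\ref{prop: density} enumerates exactly $10$ periodic configurations of minimal density $1/5$, each related to the others by a $\Z^2$ isometry.

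The remaining three assertions will be obtained by applying the Pirogov--Sinai machinery in the form of~\cite[Ch.~7]{friedliLattice2018},\cite{zahradnik1984alternate,zahradnik1998short} to the contour representation of Section~\ref{sec:low avtivity}. The two ingredients required are already in hand: the finite classification (Theorem~\ref{prop: density}) and the Peierls condition (Theorem~\ref{thm: Peierls}), which via $\cH_\Gamma(\omega\mid\eta)=(|\omega_\Gamma|-|\eta_\Gamma|)|\ln\lambda|$ yields Peierls constant $\rho(\lambda)=|\ln\lambda|/270$, diverging as $\lambda\to 0$. The main obstacle is verifying that all ten ground states are \emph{stable} in the Pirogov--Sinai sense, i.e.\ that the truncated metastable free energies $h_\eta(\lambda)$ attain their common minimum on every one of them. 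Here the $\Z^2$-isometric symmetry is essential: the specification is invariant under translations, $\pi/2$-rotations, and reflections, each of which permutes the ten ground states, forcing the $h_\eta(\lambda)$ to coincide, so that all ten ground states are simultaneously stable for every $\lambda<\lambda_0$ with $\lambda_0$ sufficiently small. The Pirogov--Sinai construction then assigns to each $\eta$ an extremal periodic Gibbs measure $\mu_\eta$ as the thermodynamic limit of finite-volume Gibbs measures with $\eta$-boundary condition, and the completeness half of the theory asserts that every periodic extremal Gibbs measure arises in this way.

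For mutual singularity, I would rely on the exponential decay of contour probabilities that accompanies the convergent cluster expansion: under $\mu_\eta$, the asymptotic density of $\eta$-correct blocks (Section~\ref{sec:low avtivity}) is strictly positive almost surely, while for every other periodic ground state $\eta'\ne\eta$ the density of $\eta'$-correct blocks vanishes almost surely. Combining these with the ergodic theorem produces disjoint translation-invariant events $A_\eta$ with $\mu_\eta(A_\eta)=1$, so the measures $\mu_\eta$ are pairwise mutually singular.

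For the weak convergence $\mu_\eta\Rightarrow\delta_\eta$ as $\lambda\to 0$, fix any $\Lambda\Subset\Z^2$ and bound $\mu_\eta(\tau_\Lambda\ne\eta_\Lambda)$ by the probability that some contour of $\omega$ encircles $\Lambda$. The Peierls bound assigns weight at most $\lambda^{|\Gamma|/270}$ to a contour of length $|\Gamma|$; since the number of contours of given length encircling $\Lambda$ grows at most exponentially in $|\Gamma|$, the total tail probability vanishes as $\lambda\to 0$. Hence $\mu_\eta(\tau_\Lambda=\eta_\Lambda)\to 1$ on every cylinder, which is weak convergence to $\delta_\eta$.
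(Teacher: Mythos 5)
Your proposal is correct and follows essentially the same route as the paper: the classification from Theorem~\ref{prop: density}, the Peierls bound of Theorem~\ref{thm: Peierls}, and the observation that the ten ground states form a single orbit under $\Z^2$ isometries (so that stability of one forces stability of all) are exactly the ingredients the paper feeds into the Pirogov--Sinai machinery of Zahradn\'{\i}k and \cite[Ch.~7]{friedliLattice2018}. The only difference is cosmetic: where the paper simply cites the literature for the standard consequences (mutual singularity, weak convergence to $\delta_\eta$), you sketch the standard arguments explicitly.
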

%%%%%
The first claim of this theorem was proved in Theorem~\ref{prop: density} above; thus, in the maximal hard-core case at low activity, periodic ground states form a single equivalence class. 
    The remaining parts of the proof amount to collecting the relevant results from the literature devoted to the
Pirogov-Sinai theory, among which we single out \cite{zahradnik1984alternate,zahradnik1998short},
as well as Ch.7 of \cite{friedliLattice2018}. Details of the application of this theory for the (standard) hard-core case appear in \cite[Theorem III]{mazel2025high}, which also gives specific references to the literature required to complete the proof. Here we limit ourselves to a brief summary.\\
\indent It is known that every extremal Gibbs measure $\mu_\eta$ is generated by a periodic ground state, $\eta$, namely,
        $$
        \mu_\eta=\lim_{\Lambda\Uparrow \Z^2}\mu_\Lambda(\cdot|\eta),
        $$
where the limit is in the van Hove sense. This follows from 
\cite{zahradnik1984alternate} and the main result of \cite{dobrushin1985problem}. Generally, there may exist periodic ground states that do not give rise to (extremal) Gibbs measures; however, there is at least one state, $\eta$,  that does \cite{zahradnik1984alternate}. Moreover, all ground states $\eta'$ obtained from $\eta$ by $\Z^2$ isometries also give rise to Gibbs measures $\mu_{\eta'}$. 
The remaining claims also follow immediately from \cite{zahradnik1984alternate}.

\section{Concluding remarks}
The approach of this paper applies to other recoverable systems, defined by similar local recovery rules that 
can be described by defining a potential $\Phi_B$, where $B$ is the recovery region. One class of such rules arises
from the maximal circle packing problem, where $B$ is defined to be a circle of a given radius in $\Z^2$. Phase transitions
for the hard-core model of this kind were previously studied in \cite{mazel2019high}, whose authors managed to classify
periodic ground states in many cases. At the same time, the positive-temperature version of that problem apparently has not been addressed in the literature.

An open problem left by this work is the uniqueness question of the Gibbs measure defined by the potential~\eqref{eq:potential} beyond the high-temperature case. A possible approach to the proof could rely on the Dobrushin-Shlosman phase uniqueness criterion \cite{dobrushin1985constructive,radulescu1987dobrushin} or a connection with
disagreement percolation \cite{van1994disagreement}.

Finally, we briefly discuss the relationship between the activity parameter $\lambda$ and the density
of occupied sites ($1$’s) in typical configurations. Since $\lambda$ controls the tendency of a vertex to be occupied, it directly influences the system’s density.
For sufficiently large $\lambda$, the model concentrates on configurations close to $\omega^e$ or $\omega^o$,
both of density 1/2. In contrast, the sparsest recoverable configurations, which are dominant when $\lambda$ is sufficiently small, have density 1/5. Experimental observations suggest that, at $\lambda = 1$, the density of typical configurations is approximately 1/3.
An intriguing aspect of the maximal hard-core model is that even when multiple Gibbs measures coexist, the densities of typical configurations under these different measures appear to be the same. This raises a natural question: can one rigorously establish a density curve as a function of $\lambda$, and in particular, prove that the density is 1/3 at $\lambda = 1$? We leave this as an open problem.

\vspace*{.1in}
{\sc Acknowledgments.} We are grateful to Ron Peled and Senya Shlosman for helpful discussions of 
this research. The first two authors were partially supported by the US NSF grants CCF-2330909 and CCF-2526035.

\appendix

\section{Relation to subshifts of finite type}\label{app: SFT}
Here we give an independent proof of the fact that the system $\cX_0$ has positive entropy, based on its description as a subshift of finite
type (SFT) and a general fact about such subshifts. An SFT is a collection of configurations $\cX\subset \Omega$ defined by forbidding finitely many local patterns. Clearly, $\cX_0$ is an SFT defined by forbidding all-zero crosses and pairs
of adjacent ones. 

An SFT $\cX$ is called \emph{strongly irreducible} if there exists a constant $r >0$ such that for all finite regions $A, B \Subset \Z^2$ with $\ell_1$ distance $d_1(A,B)\ge r$, for every pair of configurations $\omega, \eta \in \cX$, there exists $\sigma \in \cX$ such that $\sigma_A = \omega_A$ and $\sigma_B = \eta_B$. In words, if $\omega_A$ and $\eta_B$ are extendable configurations in $\cX$, then they are also jointly extendable provided that $A$ and $B$ are sufficiently separated.
 
\begin{proposition}
    The SFT $\cX_0$ is strongly irreducible.
\end{proposition}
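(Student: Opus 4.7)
The plan is to take $r = 4$ and, for any finite $A, B \Subset \Z^2$ with $d_1(A,B) \geq 4$ and any $\omega, \eta \in \cX_0$, construct $\sigma \in \cX_0$ satisfying $\sigma_A = \omega_A$ and $\sigma_B = \eta_B$ by a Zorn-type extension of a carefully chosen seed independent set. First I would set $A^+ := A \cup \partial A$ and $B^+ := B \cup \partial B$; a triangle-inequality computation gives $d_1(A^+, B^+) \geq d_1(A,B) - 2 \geq 2$, so no site of $A^+$ is $\Z^2$-adjacent to any site of $B^+$.

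Next, I would define the seed
\[
    I := (\supp(\omega) \cap A^+) \cup (\supp(\eta) \cap B^+).
\]
I would then argue that $I$ is an independent set in $\Z^2$: each of the two pieces is independent since $\supp(\omega), \supp(\eta) \in \sM(\Z^2)$ by Lemma~\ref{lemma:MIS}, and the non-adjacency of $A^+$ and $B^+$ rules out any edge between the pieces. By Zorn's lemma applied to the poset of independent sets of $\Z^2$ containing $I$, there exists $M \in \sM(\Z^2)$ with $I \subseteq M$; let $\sigma$ be the configuration with $\supp(\sigma) = M$, so $\sigma \in \cX_0$ by Lemma~\ref{lemma:MIS}.

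What remains is to verify $\sigma_A = \omega_A$; the argument for $\sigma_B = \eta_B$ is symmetric. For $i \in A$ with $\omega_i = 1$ we have $i \in I \subseteq M$, so $\sigma_i = 1$. For $i \in A$ with $\omega_i = 0$, maximality of $\supp(\omega)$ supplies a $1$-neighbor $j$ of $i$ in $\omega$; since $j$ is adjacent to $i \in A$, necessarily $j \in A^+$, hence $j \in I \subseteq M$. Independence of $M$ then forces $i \notin M$, so $\sigma_i = 0 = \omega_i$.

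The one subtle point---and the reason the seed contains $A^+$ rather than just $A$---is this last implication: the Zorn extension might otherwise be forced to turn a $0$ of $\omega$ inside $A$ into a $1$ in $M$ for the sake of maximality, violating $\sigma_A = \omega_A$. Including $\partial A$ in the seed guarantees that every such $0$ already has its witness $1$-neighbor inside $I$, preempting this failure mode. The choice $r \geq 4$ is then exactly what is needed to keep the enlarged seeds around $A$ and $B$ mutually non-adjacent, so that $I$ is genuinely independent and Zorn's lemma applies.
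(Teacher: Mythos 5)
Your proof is correct and follows essentially the same route as the paper: both copy $\omega$ and $\eta$ onto the thickened sets $A \cup \partial A$ and $B \cup \partial B$ (which are mutually non-adjacent once $d_1(A,B) \ge 4$) and then extend the resulting independent set to a maximal one, the inclusion of the boundaries being exactly what guarantees the extension cannot disturb the values on $A$ and $B$. The only cosmetic difference is that you complete the seed to an MIS via Zorn's lemma, whereas the paper does so by an explicit greedy procedure inside a large box with an even pattern outside; your verification that $\sigma_A = \omega_A$ is, if anything, spelled out more carefully than in the paper.
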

\begin{proof}
    Let $A, B \Subset \Z^2$ be a pair of finite regions with $d_1(A,B)\ge 4$. 
    Let $\bar A = A \cup \partial A$ and $\bar B = B \cup \partial B$, where $\partial A$, for instance,
    refers to the boundary of the set $A$.
    For any $\omega, \eta \in \cX_0$, we will construct a $\sigma \in \cX_0$ such that $\sigma_A  = \omega_A$ and $\sigma_B = \eta_B$ by the following procedure.

    \begin{enumerate}
        \item Initialize $\sigma = 0^{\Z^2}$. Fix a sufficiently large $n$ such that both $A$ and $B$ are contained in the box $U_n = \{-n, \dots, n\}^2$. 
        \item Let $\sigma_{\bar A} = \omega_{\bar A}$, and $\sigma_{\bar B} = \eta_{\bar B}$. For all $i \in U_{n+2}^c$, let $\sigma_i = 1$ if $i$ is even, otherwise let $\sigma_i = 0$.
        \item Let $I = \supp (\sigma_{U_{n+4}})$ be the set of sites $i$ such that $\sigma_i=1$. 
        Pick $i \in  U_{n+2} \setminus (A \cup B)$ such that $ \{i\} \cup I$ forms an independent set in $U_{n+2}$, and let $\sigma_i = 1$.
        \item Repeat the previous step until $I$ becomes a maximal independent set in $U_{n+4}$.
    \end{enumerate}
    
    Note that upon completing step 2), the support of $\sigma$ is an independent set in $\Z^2$, the support of $\sigma_{U_{n+{4}}^c}$ 
    is a maximal independent set, and every $0$ in $\sigma_A\cup\sigma_B$ is adjacent to some $1$. Steps 3) and 4) are applied to a finite region, eventually making (the support of) $\sigma$ a maximal independent set. Therefore, by Lemma~\ref{lemma:MIS}, $\sigma \in \cX_0$. 
\end{proof}

It was shown in~\cite{burton1994nonuniqueness} that strongly irreducible SFTs with at least $2$ configurations have positive topological entropy (see~\eqref{eq:topo_entropy}). 
Moreover, a translation-invariant measure $\mu$ on a strongly irreducible SFT has maximal entropy (which equals the topological entropy) if and only if for each $n$, the conditional distribution on $U_n$ under the boundary condition $\omega$ is the uniform distribution among all those configurations $\tau$ such that $\tau_{U_n} \omega_{U_n^c}$ form an element of the SFT (uniform conditional probabilities). The maximal hard-core Gibbs measure has uniform conditional probabilities when $\lambda = 1$, thus
it has maximal entropy, and forms a maxentropic measure for the corresponding SFT. Generally, such measures for SFTs form 
the subject of a long line of research in the literature; see \cite{burton1994nonuniqueness}, \cite{haggstrom1996phase}, \cite{meyerovitch2013gibbs}, \cite{garcia2019extender} and their references.

\section{Proof of Theorem~\ref{thm:fast-mixing}}\label{app:coupling}
   This proof is inspired by \cite[Theorem~5.8]{MarkovMixing}.
Recall that a coupling of two Markov chains with transition matrix $\bfP$ is the process 
$(X_t, Y_t)_{t=0}^\infty$ such that  both $(X_t)_{t=0}^\infty$ and $(Y_t)_{t=0}^\infty$ are Markov chains with transition matrix $P$ (but possibly different initial distributions). 
Any coupling of two Markov chains $(X_t)$ and $(Y_t)$ defined on the same state space $\cS$ can be modified to have the property that if $X_s = Y_s$, then $X_t = Y_t$ for $t \ge s$.

\begin{proposition}~{\rm\cite[Corollary~5.5]{MarkovMixing}} \label{thm:mixing_up}
    Suppose that for each pair of states $x,y \in \cS$ there is a coupling $(X_t,Y_t)$ with $X_0 = x$ and $Y_0 = y$. 
    Let $\tau_c := \min\{ t: X_t = Y_t~\text{for all } s \ge t\}$ be the coalescence time. 
    Then
    \begin{equation*}
        d(t) \le \max_{x,y \in \cX} \bfP_{x,y} \left(\tau_c > t\right).
    \end{equation*}
\end{proposition}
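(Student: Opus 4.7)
The plan is a standard three-step coupling argument, essentially the textbook proof of Corollary~5.5 of \cite{MarkovMixing}. The ingredients are: the coupling inequality relating TV distance to disagreement probability; the stickiness property of the modified coupling; and a reduction of $d(t)$ to pairwise TV distances via stationarity.

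First I would prove the coupling inequality: for any coupling $(X_t, Y_t)$ with $X_0 = x$ and $Y_0 = y$,
\begin{equation*}
    \|P^t(x,\cdot) - P^t(y,\cdot)\|_{TV} \le \bfP_{x,y}(X_t \ne Y_t).
\end{equation*}
Indeed, for any event $A \subseteq \cS$,
\begin{equation*}
    P^t(x, A) - P^t(y, A) = \bfP_{x,y}(X_t \in A) - \bfP_{x,y}(Y_t \in A) \le \bfP_{x,y}(X_t \in A,\, Y_t \notin A) \le \bfP_{x,y}(X_t \ne Y_t);
\end{equation*}
swapping the roles of $x$ and $y$ and taking the supremum over $A$ yields the stated bound, since $\|\mu - \nu\|_{TV} = \sup_A |\mu(A) - \nu(A)|$. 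Invoking now the stickiness hypothesis recalled immediately before the proposition---that the coupling may be taken to satisfy $X_s = Y_s \Rightarrow X_t = Y_t$ for all $t \ge s$---we have $\{X_t \ne Y_t\} = \{\tau_c > t\}$, so the coupling inequality sharpens to $\|P^t(x,\cdot) - P^t(y,\cdot)\|_{TV} \le \bfP_{x,y}(\tau_c > t)$.

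Second, I would reduce $d(t) = \max_x \|P^t(x,\cdot) - \pi\|_{TV}$ to the pairwise quantity $\bar d(t) := \max_{x,y \in \cS} \|P^t(x,\cdot) - P^t(y,\cdot)\|_{TV}$ using stationarity $\pi = \pi P^t$. For any $A$,
\begin{equation*}
P^t(x, A) - \pi(A) = \sum_{y \in \cS} \pi(y)\bigl(P^t(x, A) - P^t(y, A)\bigr),
\end{equation*}
so by the triangle inequality $|P^t(x,A) - \pi(A)| \le \sum_{y} \pi(y)\,\|P^t(x,\cdot) - P^t(y,\cdot)\|_{TV} \le \bar d(t)$. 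Taking the supremum over $A$ and then the maximum over $x$ gives $d(t) \le \bar d(t)$. Combining with the first step,
\begin{equation*}
d(t) \le \bar d(t) \le \max_{x,y \in \cS} \bfP_{x,y}(\tau_c > t),
\end{equation*}
which is exactly the claim.

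There is no real obstacle: each step is a short calculation using only the definition of total variation distance, the union bound, and stationarity. The only bookkeeping point is to ensure that the stickiness modification preserves the marginal laws of $(X_t)$ and $(Y_t)$, which is the content of the recalled fact (stated in the paragraph preceding the proposition) that any coupling can be modified to be coalescing without changing its marginals. Once that is in hand, the three steps above fit together without friction.
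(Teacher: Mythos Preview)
Your proof is correct and is precisely the standard argument from \cite{MarkovMixing}. Note that the paper does not supply its own proof of this proposition: it is stated with a citation to \cite[Corollary~5.5]{MarkovMixing} and used as a black box in the proof of Theorem~\ref{thm:fast-mixing}, so there is nothing in the paper to compare against beyond the textbook reference your argument reproduces.
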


    We construct a collection of random variables $\{X_t^x \mid x \in \Omega_\Lambda, t = 0, 1, 2, \dots\}$ such that for each $x \in \Omega_\Lambda$, the sequence $(X_t^x)_{t=1}^\infty$ is the Glauber chain starting with $x$. 
    We generate an i.i.d. sequence $Z_1, Z_2, \dots$  such that $Z_t = (w, U)$,  where $w \in \Lambda$ is a uniform random vertex and $U$ is a uniform random variable on $[0,1]$.
    Let $X_0^x = x$.
    At time $t$, for each chain $(X_t^x)_{t=1}^\infty$, we update $X_{t-1}^x$ by updating site $w$ according to $\pi_w(\cdot \vert X_{t-1}^x)$. 
    In other words, let $X_{t-1}^x(w)$ be the value stored on vertex $w$ and $\sigma = X_{t-1}^x \vert_{\Lambda \setminus \{w\}}$ be the configuration restricted outside $w$.
    \begin{equation*}
        X_t^x(w) = \begin{cases}
            0 & \text{if } U \le \pi_w(0 \vert \sigma) \\
            1 & \text{if } U > \pi_w(0 \vert \sigma),
        \end{cases}
    \end{equation*}
    and $X_t^x (v) = X_{t-1}^x (v)$ for $v \ne w$.

    For $x,y \in \Omega_\Lambda$, let $\rho(x,y) = \sum_{i \in \Lambda} \1_{x_i \ne y_i}$ be the Hamming distance, which is a valid metric on $\Omega_\Lambda$ (states of the Glauber chain).
    Suppose $\rho(x,y) = 1$ and denote by $v \in \Lambda$ such that $x_v \ne y_v$.
    We consider the distance after the one-step update, $\rho(X_1^x, X_1^y)$.

    The distance is zero if and only if $v$ is selected for updating, which happens with probability $1/n$. Hence,
     \begin{equation*}
        {\mathbb P}\left(\rho(X_1^x, X_1^y) = 0\right) = \frac{1}{n}.
    \end{equation*}

    Suppose that vertex $w \ne v$ is selected for updating. 
    If $\| w - v \|_1 > 2$ then $\pi_w(0 | x) = \pi_w(0 | y)$ and $\rho(X_1^x, X_1^y) = 1$.
    If $ 0 < \| w - v \|_1 \le 2$, the probability that $x_w, y_w$ are updated to different values is 
    $|\pi_w(0 | x) - \pi_w(0 | y)|$, and thus,
    \begin{equation*}
        \begin{split}
            {\mathbb P} \left(\rho(X_1^x, X_1^y) = 2\right) &\le \frac{12}{n} \left(\sup_{w: 0 < \|w-v\|_1 \le 2} \left |\pi_w(0 \vert x) - \pi_w(0\vert y) \right | \right) \\
                &\le \frac{12}{n} \left(\frac{2}{1+e^{-10\beta}} -1\right),
        \end{split}
    \end{equation*}
    where the last inequality follows from the observation that $ 1 - \frac{1}{1 + e^{-10\beta}}\le \pi_w(0 | \omega) \le \frac{1}{1 + e^{-10\beta}}$ for any $\omega \in \Omega_\Lambda$.
    Indeed, $\pi_w(0 | \omega) \le \frac{e^{ 5 \beta}}{e^{-5\beta} + e^{5\beta}}$ with equality when $w$ is surrounded by $1$'s:
    \[\begin{matrix}
        & & 0 & & \\
        & 0 & 1 & 0 & \\
        0 & 1 & w & 1 & 0 \\
         & 0 & 1 & 0 & \\
         & & 0 & & 
    \end{matrix}.\]
    Similarly, $\pi(1 |\omega) \le \frac{e^{5\beta}}{e^{5\beta} + e^{-5 \beta}}$ with equality when $w$ is surrounded by $0$'s:
    \[
    \begin{matrix}
        & & 0 & & \\
        & 0 & 0 & 0 & \\
        0 & 0 & w & 0 & 0 \\
         & 0 & 0 & 0 & \\
         & & 0 & & 
    \end{matrix}
    \]
       Therefore, 
    \begin{equation}\label{eq:Erho_dist1}
        \begin{split}
            \E \left [ \rho(X_1^x, X_1^y) -1 \right ] &= -1 \cdot {\mathbb P}\left(\rho(X_1^x, X_1^y) = 0\right) + 1 \cdot {\mathbb P}\left(\rho(X_1^x, X_1^y) = 2\right) \\
            &\le \frac{1}{n} \left(\frac{24}{1+e^{-10\beta}} -13\right) = -\frac{c_\beta}{n}.
        \end{split}
    \end{equation}

    Now suppose that $\rho(x,y) = r$. 
    There are configurations $x_0 = x, x_1, x_2, \dots, x_r = y$ such that $\rho(x_k,x_{k-1}) = 1$ for $k = 1,\dots, r$, and by the triangle inequality and linearity of expectation,
    \begin{equation*}
        \begin{split}
            \E \left[ \rho(X_1^x, X_1^y) \right] \le \sum_{k=1}^r \E \left[ \rho(X_1^{x_k},X_1^{x_{k-1}}) \right] 
            \le r \left(1 - \frac{c_\beta}{n}\right) = \rho(x,y) \left(1 - \frac{c_\beta}{n}\right).
        \end{split}
    \end{equation*}

    Notice that the random vector $(X_t^x, X_t^y)$ conditioned on the event that $X_{t-1}^x = x_{t-1}$ and $X_{t-1}^y = y_{t-1}$ has the same distribution as $(X_1^{x_{t-1}}, X_1^{y_{t-1}})$.
    Hence, by~\eqref{eq:Erho_dist1},
    \begin{equation*}
        % \begin{split}
            \E \left[ \rho(X_t^x, X_t^y) \mid X_{t-1}^x = x_{t-1}, X_{t-1}^y = y_{t-1} \right] = \E \left[ \rho(X_1^{x_{t-1}}, X_1^{y_{t-1}}) \right] 
            \le \rho (x_{t-1},y_{t-1}) \left(1 - \frac{c_\beta}{n}\right).
        % \end{split}
    \end{equation*}

    Taking an expectation over $(X_{t-1}^x, X_{t-1}^y)$ on both sides, we have 
    \begin{equation}\label{eq:Einductive}
        \E \left[ \rho(X_t^x, X_t^y) \right] \le \E \left[ \rho (X_{t-1}^x,X_{t-1}^y) \right] \left(1 - \frac{c_\beta}{n}\right). 
    \end{equation}
    Iterating~\eqref{eq:Einductive} shows that 
    \begin{equation*}
        \E \left[ \rho(X_t^x, X_t^y)  \right] \le \rho(x,y) \left(1 - \frac{c_\beta}{n}\right)^t \le n e^{- \frac{c_\beta}{n}t}.
    \end{equation*}

    Now consider the coalescence time, $\tau_c$, of the two chains $(X_t^x)$ and $(X_t^y)$. Note that ${\mathbb P}(\tau_c > t) = {\mathbb P}(X_t^x \ne X_t^y) = {\mathbb P} \left(\rho(X_t^x, X_t^y) \ge 1\right)$, so that by Markov's inequality,
    \begin{align*}
        {\mathbb P}(\tau_c > t) &= {\mathbb P} \left(\rho(X_t^x, X_t^y) \ge 1\right) \le \E \left[ \rho(X_t^x, X_t^y)  \right] \\ &\le \rho(x,y) \left(1 - \frac{c_\beta}{n}\right)^t \le n e^{- \frac{c_\beta}{n}t}.
    \end{align*}
    By Proposition~\ref{thm:mixing_up}, $d(t) \le n e^{-t \frac{c_\beta}{n}}$, so if $t > \frac{n}{c_\beta} \left(\ln n + \ln 1/\epsilon\right)$ then $d(t) \le \epsilon$.
The proof is complete.

\section{Existence of the maximal hard-core Gibbs measure}\label{App: existence of measure}

Our proof follows the Dobrushin-Lanford-Ruelle (DLR) approach to the construction. Let $\Lambda \Subset \Z^2$. 
For each $\omega \in \Omega$, we define 
\begin{equation}\label{eq:prob_kernel}
    \pi_\Lambda(\{\tau_\Lambda \omega_{\Lambda^c}\} \mid \omega) := \pi_{\Lambda}(\tau_\Lambda \mid \omega) = \begin{cases}
    \frac{\lambda^{|\tau_\Lambda|}}{Z_\Lambda^\omega} & \text{ if } \tau_\Lambda \omega_{\Lambda^c} \in \cX_0 \\
    0 & \text{ otherwise}
\end{cases}
\end{equation}
where $Z_\Lambda^{\omega_{\Lambda^c}} = \sum_{\tau_\Lambda: \tau_\Lambda \omega_{\Lambda^c} \in \cX_0} \lambda^{|\tau_\Lambda|}$, and $|\tau_\Lambda|$ is the number of $1$'s contained in $\tau_\Lambda$. 
Note that~\eqref{eq:prob_kernel} defines a probability measure on $(\cX_0, \cF_0)$ if $\omega \in \cX_0$ (where $\cF_0$ is the corresponding cylinder $\sigma$-algebra).
These probability measures are consistent in the following sense: for every fixed $\Delta \subset \Lambda \Subset \Z^2$ and $\sA \in \cF$, we have
\begin{align*}
        \pi_{\Lambda} \pi_{\Delta} (\sA \mid \omega) & := \sum_{\tau_\Lambda} \pi_\Lambda(\tau_\Lambda \mid \omega) \pi_\Delta(\sA \mid \tau_\Lambda \omega_{\Lambda^c}) \\
& = \sum_{\substack{\tau_\Lambda \\ \tau_\Lambda \omega_\Lambda^c \in \cX_0}} \pi_\Lambda (\tau_\Lambda \mid \omega)   
  \sum_{\substack{\eta_\Delta \\ \eta_\Delta \tau_{\Lambda \setminus \Delta} \omega_{\Lambda^c} \in \cX_0}} \1_{\sA} (\eta_\Delta 
  \tau_{\Lambda \setminus \Delta} \omega_{\Lambda^c}) \pi_{\Delta} (\eta_\Delta \mid \tau_\Lambda \omega_{\Lambda^c}) \\
& = \sum_{\substack{\tau_\Lambda \\ \tau_\Lambda \omega_\Lambda^c \in \cX_0}} \sum_{\substack{\eta_\Delta \\ \eta_\Delta \tau_{\Lambda   
    \setminus \Delta} \omega_{\Lambda^c} \in \cX_0}}  \frac{\lambda^{|\tau_\Lambda|}}{Z_{\Lambda}^{\omega_\Lambda^c}} \1_{\sA} (\eta_\Delta 
     \tau_{\Lambda \setminus \Delta} \omega_{\Lambda^c}) \frac{\lambda^{|\eta_\Delta|}}{Z_\Delta^{\tau_{\Lambda\setminus \Delta} 
 \omega_{\Lambda^c}}}. \\
& = \sum_{\tau_{\Lambda \setminus \Delta}}
   \sum_{\substack{\tau_\Delta \\ \tau_\Delta \tau_{\Lambda \setminus \Delta} \omega_{\Lambda^c} \in \cX_0}} 
    \sum_{\substack{\eta_\Delta \\ \eta_\Delta \tau_{\Lambda \setminus \Delta} \omega_{\Lambda^c} \in \cX_0}}
    \frac{\lambda^{|\tau_\Delta| + |\tau_{\Lambda \setminus \Delta}|}}{Z_\Lambda^{\omega_{\Lambda^c}}} \1_{\sA} (\eta_\Delta \tau_{\Lambda \setminus \Delta} \omega_{\Lambda^c})\frac{\lambda^{|\eta_\Delta|}}{Z_\Delta^{\tau_{\Lambda\setminus \Delta} \omega_{\Lambda^c}}} \\
\intertext{(here and on the next line the sum on $\tau_{\Lambda \setminus \Delta}$ runs over all patterns such that there is a
$\tau'_\Delta \in \Omega_\Delta$ for which $\tau'_\Delta \tau_{\Lambda 
   \setminus \Delta} \omega_{\Lambda^c} \in \cX_0$)}
& =  \sum_{\tau_{\Lambda \setminus \Delta}} \sum_{\substack{\eta_\Delta \\ \eta_\Delta \tau_{\Lambda \setminus \Delta} \omega_{\Lambda^c} \in 
   \cX_0}} \frac{\lambda^{|\eta_\Delta|+|\tau_{\Lambda \setminus \Delta}|}}{Z_\Lambda^{\omega_{\Lambda^c}}} \1_{\sA}(\eta_\Delta \tau_{\Lambda \setminus \Delta} \omega_{\Lambda^c})\sum_{\substack{\tau_\Delta \\ \tau_\Delta \tau_{\Lambda \setminus \Delta} \omega_{\Lambda^c} \in \cX_0}} \frac{\lambda^{|\tau_\Delta|}}{Z_\Delta^{\tau_{\Lambda \setminus \Delta} \omega_{\Lambda^c}}} \\
& = \pi_{\Lambda}(\sA \mid \omega).
\end{align*}
The last equality above holds because the sum on $\tau_\Delta$ on the previous line equals 1 by definition.
Therefore, the family $\{\pi_{\Lambda}\}_{\Lambda \Subset \Z^2}$ is a \emph{specification}, and for all
$\Lambda\subset U_n$, we have
\begin{equation}\label{eq:consistencyMIS}
    \pi_{U_n} = \pi_{U_n} \pi_\Lambda.
\end{equation}
It can be shown that the set of probability measures on $(\cX_0, \cF_0)$ is sequentially compact~\cite[Thm. 6.24]{friedliLattice2018}.
Therefore, there exists a subsequence $({n_k})_{k \ge 1}$ such that $\pi_{U_{n_k}} \Rightarrow  \mu$ when $k \to \infty$, where $\Rightarrow$ denotes weak convergence. To show the probability measure specified by~\eqref{eq:prob_kernel} exists, we need to verify that $\mu  = \mu \pi_\Lambda$ for all $\Lambda \Subset \Z^2$. 

Denote by $\mu(f)$ the expectation of a function $f: \cX_0 \to \reals$ with respect to the probability measure $\mu$. We will 
limit ourselves to continuous functions $f$, which in our context means the following. We say that a sequence of configurations $\omega^{(n)} \in \cX_0$ converges to $\omega \in \cX_0$ (written as $\omega^{(n)} \to \omega$) if for all $N$, there exists an $n_0$ such that 
     $$
\omega^{(n)}_{U_N} = \omega_{U_N} \quad \forall n \ge n_0.
     $$
A function $f: \cX_0 \mapsto \R$ is said to be continuous if $\omega^{(n)} \to \omega $ implies $ f(\omega^{(n)}) \to f(\omega)$.
If $f$ is continuous, then it is also bounded and $\cF_0$-measurable. 

We need the following lemma to proceed.
\begin{lemma}[\cite{friedliLattice2018}, Lemma~6.22]
    Let $\mu, \nu$ be probability measures defined on $(\cX_0, \cF_0)$. 
    We have $\mu = \nu$ if and only if $\mu(f) = \nu(f)$ for all \emph{continuous} functions $f : \cX_0 \mapsto \R$. 
\end{lemma}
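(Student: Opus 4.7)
The forward direction is immediate: if $\mu = \nu$, then every continuous $f:\cX_0\to\R$ is bounded (since $\cX_0$ is compact in the product topology inherited from $\{0,1\}^{\Z^2}$) and $\cF_0$-measurable, so $\mu(f) = \int f\,d\mu = \int f\,d\nu = \nu(f)$.

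\noindent For the converse, the plan is to show that the hypothesis $\mu(f)=\nu(f)$ for all continuous $f$ forces $\mu$ and $\nu$ to agree on the generating $\pi$-system of cylinder sets, and then invoke Dynkin's $\pi$-$\lambda$ theorem. The pivotal observation is that under the notion of convergence specified immediately before the lemma---$\omega^{(n)}\to\omega$ iff for every $N$ there exists $n_0$ with $\omega^{(n)}_{U_N} = \omega_{U_N}$ for all $n\ge n_0$---the indicator function of any cylinder set is continuous. Indeed, fix $\Lambda \Subset \Z^2$ and $A\in\sP(\Omega_\Lambda)$, and let $C = \Pi_\Lambda^{-1}(A)\cap \cX_0$. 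Choose $N$ large enough so that $\Lambda\subset U_N$. If $\omega^{(n)}\to\omega$ in $\cX_0$, then eventually $\omega^{(n)}_\Lambda = \omega_\Lambda$, so $\1_C(\omega^{(n)}) = \1_C(\omega)$ for all sufficiently large $n$. Thus $\1_C$ is continuous, and applying the hypothesis gives
\begin{equation*}
    \mu(C) \;=\; \mu(\1_C) \;=\; \nu(\1_C) \;=\; \nu(C).
\end{equation*}

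\noindent The next step is to extend this agreement from cylinders to all of $\cF_0$. Let
\begin{equation*}
    \mathcal{D} \;:=\; \bigcup_{\Lambda\Subset\Z^2} \bigl\{\Pi_\Lambda^{-1}(A)\cap \cX_0 : A\in\sP(\Omega_\Lambda)\bigr\}.
\end{equation*}
The collection $\mathcal{D}$ is a $\pi$-system: given two cylinders with bases $\Lambda_1,\Lambda_2$, their intersection is again a cylinder with base $\Lambda_1\cup\Lambda_2$, obtained from the subset of $\Omega_{\Lambda_1\cup\Lambda_2}$ consistent with both defining events. Moreover $\mathcal{D}$ generates $\cF_0$ by the very definition of the cylinder $\sigma$-algebra. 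We have just established that $\mu$ and $\nu$ agree on $\mathcal{D}$, and both are probability measures (so in particular agree on $\cX_0\in\mathcal{D}$, the trivial cylinder). By Dynkin's $\pi$-$\lambda$ theorem, two probability measures that agree on a $\pi$-system must agree on the $\sigma$-algebra it generates, whence $\mu=\nu$ on $\cF_0$.

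\noindent The argument has no serious obstacle: it is essentially a standard unique-extension result once continuity of cylinder indicators is observed. The only point requiring slight care is the distinction between the full space $\Omega$ and the subspace $\cX_0$---specifically, checking that traces of cylinders on $\cX_0$ still form a generating $\pi$-system for $\cF_0$, which is immediate from how $\cF_0$ was defined as the restriction of the cylinder $\sigma$-algebra to $\cX_0$.
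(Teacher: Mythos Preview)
The paper does not actually prove this lemma; it is quoted verbatim from \cite{friedliLattice2018} (Lemma~6.22) and used as a black box in Appendix~\ref{App: existence of measure}. Your proof is correct and is the standard argument one would expect to find in that reference: cylinder indicators are continuous (cylinders being clopen in the product topology), cylinders form a $\pi$-system generating $\cF_0$, and Dynkin's $\pi$-$\lambda$ theorem finishes the job. There is nothing to compare against, but your write-up stands on its own.
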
 

For a probability kernel $\{\pi_\Lambda\}_{\Lambda \Subset \Z^2}$ and a continuous function $f$, define
     $$
\pi_\Lambda f(\omega) := \sum_{\eta_\Lambda \in \Omega_\Lambda}  f(\eta_\Lambda \omega_{\Lambda^c}) \pi_\Lambda (\eta_\Lambda \mid \omega),
     $$
and note that $\pi_\Lambda f$ is again a continuous (thus bounded and measurable) function~\cite[Exercise 6.13]{friedliLattice2018}. 
Moreover, it can be shown~\cite[Exercise~6.6]{friedliLattice2018} that $\nu \pi_\Lambda (f) = \nu (\pi_\Lambda f)$ for every probability measure on $\nu$ on $(\cX_0, \cF_0)$.

To ease the notation, we write $\mu_n := \pi_{U_n}(\cdot \mid \omega)$, recalling that there exists a subsequence $(\mu_{n_k})_{k \ge 1}$ such that  $\mu_{n_k} \Rightarrow \mu$ as $k \to \infty$. 
Now for every continuous function $f : \cX_0 \mapsto \R$, we have 
   \begin{align*}
    \mu \pi_\Lambda (f) &= \mu(\pi_\Lambda f) = \lim_{k \to \infty} \mu_k (\pi_\Lambda f)\\
    &= \lim_{k \to \infty} \mu_k \pi_\Lambda (f) = \lim_{k 
    \to \infty} \mu_{n_k}(f) = \mu(f),
    \end{align*}
where the second-to-last equality follows by the consistency relation~\eqref{eq:consistencyMIS}.

\section{Proof of Theorem~\ref{thm:H_0 bound}} \label{App:zero-temp-entropy}

\subsection{Lower bound}

We can use configurations in $\cG^0_{{m \times n}}$ to construct configurations in $\cX_0$ as follows.
Partition $\Z^2$ into disjoint blocks of size $(m+1)\times (n+1)$ with the form $\Z_{(m+1) \times (n+1)} + (t_1 (m+1), t_2(n+1))$, where $t_1, t_2 \in \Z$.
For each block, we place an arbitrary configuration from $\cG^0_{{m \times n}}$ into the top-left corner of the block. 
The bottom and right edges of each block are then uniquely determined by the recovery rule.
Therefore, $|\cG_{m+1,n+1}| \ge |\cG^0_{m \times n}|$ and

\begin{equation} \label{eq:H_0-low}
H_0 = \lim_{m,n \to \infty}\frac{\log_2 |\cG_{(m+1),(n+1)}|}{(m+1)(n+1)} \ge \frac{\log_2 |\cG^0_{m \times n}|}{(m+1)(n+1)}.
\end{equation}

Note that $|\cG^0_{m \times n}|$ is the number of MISs on $\Z_{m \times n}$, which has already been computed by Oh~\cite{Oh2017maxindset}.
It was shown that $H_0\ge 0.29$.
Oh's method is computationally efficient but expensive in memory. 
To derive a better estimation, we rely on a standard transfer matrix method.

{\bfit Transfer matrix:} Let $\mathbf{S}(m) \subset \Omega_{\Z_{m \times 2}}$ be the set of binary configurations such that there are no adjacent $1$'s.
It is convenient to think of $\mathbf{S}(m)$ as an $m\times 2$ binary matrix.
We call $\mathbf{S}(m)$ the state space, and $s = (s_L, s_R) \in \mathbf{S} (m)$ a left (right) state if every $0$ in the left (right) column $s_L$ is adjacent to at least one $1$.
Let $s^i = (s^i_L, s^i_R), s^j = (s^j_L, s^j_R))$ be two states in $\mathbf{S}(m)$. 
There is a transition from state $s^i$ to state $s^j$ if and only if $s^i_R = s^j_L$ and for the configuration $(s^i_L, s^i_R, s^j_R) \in \Omega_{\Z_{m \times 3}}$, every $0$ in the middle column $s^i_R$ is adjacent to at least one $1$, and there is no adjacent $1$'s in the configuration $(s^i_L, s^i_R, s^j_R)$.
After the transition, a new column is added $s^i_R$ to the configuration.
The transfer matrix $T_m$ is an $|\mathbf{S}(m)| \times |\mathbf{S}(m)|$ binary matrix with entries $T_m({s^i, s^j}) = 1$ if there is a transition from $s^i$ to $s^j$ and $0$ otherwise.

Configurations from $\cG^0_{m \times n}$ can be constructed by state transitions as shown in Example~\ref{example:transfer}.
To construct a configuration from $\cG^0_{m \times n}$, we need to start from a left state and end with a right state. 
Therefore, $|\cG^0_{m \times n}| = \mathbf{1}_L \cdot T_m^{n-2} \cdot \mathbf{1}_R$, where $\mathbf{1}_L$ ($\mathbf{1}_R$) is the $|\mathbf{S}(m)|$ dimensional row vector that takes value $1$ at all left (right) states and $0$ otherwise. 
The lower bound in the theorem is obtained by taking $(m,n)=(12,130)$ and finding $|\cG^0_{m \times n}|$ by computer.
We obtain
\(
|\cG^0_{12 \times 130}| \ge2.5109\times 10^{154},  
\)
and thus,
\(
H_0 \ge 0.3012. 
\)

\begin{example}\label{example:transfer}
    When $m = 2$, there are 7 states in $\mathbf{S}(2)$:
\begin{gather*}
    s^0 = \begin{bmatrix}
        0 & 0 \\
        0 & 0
    \end{bmatrix},
    s^1 = \begin{bmatrix}
        0 & 0 \\
        0 & 1
    \end{bmatrix},
    s^2 = \begin{bmatrix}
        0 & 1 \\
        0 & 0
    \end{bmatrix},
    s^3 = \begin{bmatrix}
        0 & 0 \\
        1 & 0
    \end{bmatrix}, \\
    s^4 = \begin{bmatrix}
        0 & 1 \\
        1 & 0
    \end{bmatrix},
    s^5 = \begin{bmatrix}
        1 & 0 \\
        0 & 0
    \end{bmatrix},
    s^6 = \begin{bmatrix}
        1 & 0 \\
        0 & 1
    \end{bmatrix}.
\end{gather*}

The left states are $s^3, s^4, s^5, s^6$, the right states are $s^1, s^2, s^4, s^6$.
The transfer matrix takes the form
    \begin{equation*}
        \begin{bmatrix}
0 & 0 & 0 & 0 & 0 & 0 & 0 \\
0 & 0 & 0 & 1 & 1 & 0 & 0 \\
0 & 0 & 0 & 0 & 0 & 1 & 1 \\
0 & 0 & 1 & 0 & 0 & 0 & 0 \\
0 & 0 & 0 & 0 & 0 & 1 & 1 \\
0 & 1 & 0 & 0 & 0 & 0 & 0 \\
0 & 0 & 0 & 1 & 1 & 0 & 0 \\
\end{bmatrix}.
    \end{equation*}
    The sequence of transitions 
    \[
    s^6 \to s^3 \to s^2 \to s^6 \to s^4 \to s^6
    \]
    gives a configuration in $\cG^0_{2\times 7}$:
    \begin{equation}\label{config}
        \begin{bmatrix}
            1 & 0 & 0 & 1 & 0 & 1 & 0 \\
            0 & 1 & 0 & 0 & 1 & 0 & 1
        \end{bmatrix}.
    \end{equation}
\end{example}

This proposition suggests ways of approaching the computation or bounding of the entropy of the system. Along these lines, we will
prove the following statement, which forms our main result in this section.

\subsection{Upper bound}

Let $F(m,n) \subseteq \Omega_{\Z_{m \times n}}$ be the set formed of the configurations such that no two $1$'s are adjacent and no $0$'s is adjacent to four\footnote{It is possible for $0$'s on the boundary to have no neighbors that take value $1$.} $0$'s.
Since the set $\cG^0_{m \times n}$ of maximal independent sets with zero boundary condition satisfies $\cG^0_{m \times n} \subseteq F(m,n)$, we have
  \begin{equation}\label{eq:H_0-up}
H_0 = \lim_{m,n \to \infty} \frac{|\cG^0_{m \times n}|}{mn} \le \lim_{m,n \to \infty} \frac{|F(m,n)|}{mn},
   \end{equation}
where the second limit exists due to subadditivity.

We now apply the following results of~\cite{forchhammer2000} to derive an upper bound of~\eqref{eq:H_0-up}.
A pair of configurations in $F(m,n)$, $n \ge 4$ with identical rightmost and leftmost two columns is called a two-seam cylinder configuration of circumference $2p = 2(n-2)$.
Denote such configurations by $\overline{F}(m,2p)$. 
Let $\overline{F}(2, 2p)$ be the state space. 
There is a transition from state $s^i = \begin{pmatrix}
    s^i_T \\
    s^i_B
\end{pmatrix}$ to $s^j = \begin{pmatrix}
    s^j_T \\
    s^j_B
\end{pmatrix}$  
if the bottom row of $s^i$ is identical to the top row of $s^j$, i.e., $s^i_B = s^j_T$, and $\begin{pmatrix}
    s^i_T \\
    s^i_B \\
    s^j_B
\end{pmatrix} \in \overline{F}(3,2p)$ (see Example~\ref{example:seam}).
Let $\overline{T}_m$ be the corresponding transfer matrix and $\overline{\lambda}_{2p}$ be the largest eigenvalue.
The capacity of the two-seam cylinder of circumference $2p$ is given by the limit
\begin{equation} \label{eq:seam-bound}
\overline{H}(2p):= \lim_{m \to \infty} \frac{\log |\overline{F}(m,2p)|}{2pm} = \frac{\log \overline{\lambda}_{2p}}{2p}.
\end{equation}
The second equality again follows from~\cite[Thm 4.4.4]{Lind_Marcus_2021}.

\begin{example}\label{example:seam}
    Here are two-seam configurations (states) of circumference $2p = 6$ and width $m=2$, and there is a transition from~\eqref{eq:two_seam1} to~\eqref{eq:two_seam2}.
    \begin{equation}\label{eq:two_seam1}
        \begin{bmatrix}
            0 & 0 & 0 & 0 & 0  \\
            0 & 1 & 0 & 1 & 0 
        \end{bmatrix},
        \begin{bmatrix}
            0 & 0 & 1 & 0 & 0  \\
            0 & 1 & 0 & 1 & 0
        \end{bmatrix},
    \end{equation}
    \begin{equation}\label{eq:two_seam2}
        \begin{bmatrix}
            1 & 0 & 1 & 0 & 1 \\
            0 & 0 & 0 & 0 & 0
        \end{bmatrix},
        \begin{bmatrix}
            1 & 0 & 0 & 1 & 0 \\
            0 & 0 & 1 & 0 & 0
        \end{bmatrix}.
    \end{equation}
\end{example}

\begin{theorem}~\cite[Thm. 1]{forchhammer2000}
    The double limit satisfies
    \begin{equation}\label{eq:seam_upper_bound}
        \lim_{m,n \to \infty} \frac{\log |F(m,n)|}{mn} \le  \overline{H}(2p),
    \end{equation}
    where $p$ is an integer at least $2$.
\end{theorem}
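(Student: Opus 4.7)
The strategy is to bound $|F(m, n)|$ via a horizontal transfer matrix and to compare its spectral radius to $|\overline{F}(m, 2p)|$ through a Frobenius--Perron inequality. Fix an integer $p \ge 2$ and let $C = \{0,1\}^{2m}$ denote the set of height-$m$ two-column patterns. Define a nonnegative matrix $T_p$ on $C \times C$ by letting $T_p(q, q')$ be the number of configurations in $F(m, p+2)$ whose leftmost two columns form the pattern $q$ and whose rightmost two columns form the pattern $q'$.

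I would then partition any $\omega \in F(m, n)$ as follows. Writing $n = 2 + kp + r$ with $k = \lfloor (n-2)/p \rfloor$ and $0 \le r < p$, break the columns of $\omega$ into $k$ overlapping width-$(p+2)$ slabs $I_1, \ldots, I_k$, where consecutive slabs share exactly two columns, plus a residual strip of $r$ columns at the right. Since the defining $F$-constraints (no adjacent $1$'s and no $0$ surrounded by four $0$'s) are local, involving only sites at $\ell_1$-distance at most $1$, every local violation in $\omega$ lies within a single slab, and the 2-column overlap is wide enough that requiring each slab to be $F$-valid is equivalent to $F$-validity of $\omega$. Writing $q_0, q_1, \ldots, q_k \in C$ for the sequence of shared boundaries,
\[
|F(m, n)| \;\le\; 2^{m r} \sum_{q_0, q_1, \ldots, q_k \in C} \prod_{i=1}^{k} T_p(q_{i-1}, q_i) \;=\; 2^{m r}\, \mathbf{1}^{T} T_p^{k}\, \mathbf{1}.
\]
Letting $\rho_p := \rho(T_p)$, the Jordan decomposition supplies a polynomial $P_m(k)$ (independent of $n$) with $\mathbf{1}^{T} T_p^{k}\, \mathbf{1} \le P_m(k)\, \rho_p^{k}$; taking logarithms, dividing by $mn$, and letting $n \to \infty$ with $m$ fixed gives
\[
\limsup_{n \to \infty} \frac{\log |F(m, n)|}{m n} \;\le\; \frac{\log \rho_p}{p m}.
\]

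To finish, I would relate $\rho_p$ to $|\overline{F}(m, 2p)|$ via the Frobenius norm. By the definition in the excerpt, a two-seam cylinder configuration of circumference $2p$ is an ordered pair $(\omega_1, \omega_2) \in F(m, p+2)^{2}$ sharing the same leftmost two columns and the same rightmost two columns; classifying such pairs by their common boundaries yields
\[
|\overline{F}(m, 2p)| \;=\; \sum_{q_0, q_1 \in C} T_p(q_0, q_1)^{2} \;=\; \|T_p\|_F^{2} \;\ge\; \|T_p\|_2^{2} \;\ge\; \rho_p^{2},
\]
using Frobenius-norm $\ge$ spectral-norm $\ge$ spectral-radius. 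Hence $\log \rho_p / (p m) \le \log |\overline{F}(m, 2p)|/(2 p m)$. Letting $m \to \infty$ on the right-hand side produces $\overline{H}(2p)$ by its defining limit, and combining with the existence of the double limit $\lim_{m,n \to \infty}\log|F(m,n)|/(mn)$ stated just before the theorem completes the proof.

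The principal obstacle will be the slab decomposition in the second step: although the $F$-constraint is local, one must check carefully that requiring each width-$(p+2)$ slab to be $F$-valid --- with its constraint automatically relaxed at the slab's own boundary columns, where sites have fewer than four neighbors --- still captures every local violation of the global $F(m,n)$-constraint, and that the shared 2-column overlap between adjacent slabs enforces all remaining cross-slab consistency. This reduces to a direct inspection: a radius-$1$ cross centered at a non-seam column lies entirely in one slab, while a cross centered on a seam column is constrained by both adjacent slabs simultaneously. The remainder of the argument is routine spectral analysis combined with the Frobenius-norm inequality.
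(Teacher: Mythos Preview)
The paper does not prove this theorem; it is quoted from \cite{forchhammer2000} and used as a black box. Your proposal is a self-contained and correct proof of the cited result, so there is nothing in the paper to compare it against.

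Two minor comments on the write-up. First, the ``principal obstacle'' you flag is a non-issue: for the upper bound $|F(m,n)| \le 2^{mr}\,\mathbf{1}^{T}T_p^{k}\,\mathbf{1}$ you need only the trivial direction that restricting an $F(m,n)$-valid configuration to a width-$(p+2)$ slab yields an $F(m,p+2)$-valid configuration (restriction can only relax the boundary constraint), not the equivalence you discuss. The converse direction---whether every compatible sequence of $F$-valid slabs glues to a globally valid configuration---is irrelevant here; overcounting is harmless. Second, the passage from the fixed-$m$ bound $\limsup_{n}\frac{\log|F(m,n)|}{mn} \le \frac{\log|\overline{F}(m,2p)|}{2pm}$ to the double limit deserves one sentence: subadditivity in each variable gives existence of the iterated limit $\lim_{m}\lim_{n}$ and its equality with the double limit, so sending $m\to\infty$ is legitimate.

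The Frobenius-norm step $|\overline{F}(m,2p)| = \|T_p\|_F^2 \ge \rho_p^2$ is the heart of the argument and is correctly identified; this is essentially the mechanism behind the two-seam construction in \cite{forchhammer2000}.
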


Let $p = 7$ in~\eqref{eq:seam_upper_bound}, combined with~\eqref{eq:seam-bound} we have $H_0 \le \frac{\bar{\lambda}_{14}}{14} =  0.34085026$.

\bibliographystyle{abbrvurl}
\bibliography{recoverablesystem}
\end{document}